\documentclass[reqno]{amsart}
\pdfoutput=1

\usepackage{amssymb}
\usepackage{color}
\usepackage{enumitem}
\usepackage[T1]{fontenc}
\usepackage[utf8]{inputenc}
\usepackage[l2tabu,orthodox]{nag}
\usepackage{stmaryrd}
\usepackage{tensor}
\usepackage[vcentermath]{youngtab}

\usepackage[
		pdftex,
		bookmarks,
		bookmarksopen,
		bookmarksopenlevel=1,
		bookmarksnumbered,
		breaklinks,
	]{hyperref}

\hypersetup{
	pdftitle    = Algebraic Integrability Conditions for Killing Tensors on Constant Sectional Curvature Manifolds,%
	pdfauthor   = Konrad P. Sch\"obel,%
	pdfsubject  = Riemannian geometry,%
	pdfcreator  = \LaTeX{} with `scrreprt' class,%
	pdfkeywords = Killing tensors - integrability - constant sectional curvature,%
	pdfproducer = pdf\LaTeX%
}
\definecolor{linkcolor}{rgb}{0.375,0.0,0.0}
\definecolor{citecolor}{rgb}{0.0,0.375,0.0}
\definecolor{urlcolor} {rgb}{0.0,0.0,0.375}

\title
	[Integrability of \name{Killing} Tensors on Constant Curvature Manifolds]
	{Algebraic Integrability Conditions \\ for \name{Killing} Tensors \\ on Constant Sectional Curvature Manifolds}
\author{Konrad P. Schöbel}
\email{konrad.schoebel@uni-jena.de}
\address{
	Mathematisches Institut \\
	Fakultät für Mathematik und Informatik \\
	Friedrich-Schiller-Universität Jena \\
	07737 Jena \\
	Germany
}
\thanks{\textit{E-mail.} \href{mailto:konrad.schoebel@uni-jena.de}{\nolinkurl{konrad.schoebel@uni-jena.de}}}
\subjclass[2010]{
	70H06,  
	53C25,  
	53B20
}
\keywords{\name{Killing} tensors, integrability, constant sectional curvature manifolds}

\newtheorem{theorem}{Theorem}[section]
\newtheorem*{maintheorem}{Main Theorem}
\newtheorem*{maincorollary}{Main Corollary}
\newtheorem{proposition}[theorem]{Proposition}
\newtheorem{lemma}[theorem]{Lemma}
\newtheorem{corollary}[theorem]{Corollary}
\newtheorem{definition}[theorem]{Definition}
\newtheorem{remark}[theorem]{Remark}

\newtheorem{example}[theorem]{Example}
\newtheorem{examples}[theorem]{Examples}
\newtheorem{convention}[theorem]{Convention}
\numberwithin{equation}{section}

\setenumerate[1]{label=(\roman*),ref=(\roman*)}
\setenumerate[2]{label=(\alph*),ref=(\alph*)}

\newcommand{\linkref}[2]{#1 \ref{#2}}
\newcommand{\enumskip}{\mbox{}}

\newcommand{\name}[1]{{#1}}
\newcommand{\dfn}[1]{\emph{#1}}

\makeatletter
	\newcommand{\phantomequation}[2]{\immediate\write\@auxout{\string\newlabel{#1}{{\theequation#2}{\thepage}{}{equation.\theequation}{}}}}
	\newcommand{\scalebox}[2]{%
		\leavevmode
		\def\Gscale@x{#1}\def\Gscale@y{#1}%
		\setbox\z@\hbox{{#2}}%
		\setbox\tw@\hbox{\Gscale@start\rlap{\copy\z@}\Gscale@end}%
		\ifdim#1\p@<\z@
			\ht\tw@-#1\dp\z@
			\dp\tw@-#1\ht\z@
		\else
			\ht\tw@#1\ht\z@
			\dp\tw@#1\dp\z@
		\fi
		\ifdim#1\p@<\z@
			\hb@xt@-#1\wd\z@{\kern-#1\wd\z@\box\tw@\hss}%
		\else
			\wd\tw@#1\wd\z@
			\box\tw@
		\fi}
\makeatother

\Yboxdim10pt
\Yinterspace2pt
\newcommand{\scaleboxfactor}{0.6}
\newcommand{\smallscaleboxfactor}{0.5}
\newcommand{\textYF}{\Yboxdim4pt}
\newcommand{\YF}{\Yboxdim6pt}

\newcommand{\aone}{\scalebox{\scaleboxfactor}{\ensuremath{a_1}}}
\newcommand{\atwo}{\scalebox{\scaleboxfactor}{\ensuremath{a_2}}}
\newcommand{\bone}{\scalebox{\scaleboxfactor}{\ensuremath{b_1}}}
\newcommand{\btwo}{\scalebox{\scaleboxfactor}{\ensuremath{b_2}}}
\newcommand{\cone}{\scalebox{\scaleboxfactor}{\ensuremath{c_1}}}
\newcommand{\ctwo}{\scalebox{\scaleboxfactor}{\ensuremath{c_2}}}
\newcommand{\done}{\scalebox{\scaleboxfactor}{\ensuremath{d_1}}}
\newcommand{\dtwo}{\scalebox{\scaleboxfactor}{\ensuremath{d_2}}}
\newcommand{\eone}{\scalebox{\scaleboxfactor}{\ensuremath{e_1}}}
\newcommand{\etwo}{\scalebox{\scaleboxfactor}{\ensuremath{e_2}}}

\newcommand{\ftwo}{\scalebox{\scaleboxfactor}{\ensuremath{f_2}}}
\newcommand{\gone}{\scalebox{\scaleboxfactor}{\ensuremath{g_1}}}
\newcommand{\gtwo}{\scalebox{\scaleboxfactor}{\ensuremath{g_2}}}

\newcommand{\htwo}{\scalebox{\scaleboxfactor}{\ensuremath{h_2}}}
\newcommand{\sone}{\scalebox{\scaleboxfactor}{\ensuremath{s_1}}}
\newcommand{\ssp }{\scalebox{\scaleboxfactor}{\ensuremath{s_p}}}
\newcommand{\asq }{\scalebox{\scaleboxfactor}{\ensuremath{a_q}}}
\newcommand{\smallhdots}{\scalebox{\scaleboxfactor}{\ensuremath{\cdots}}}
\newcommand{\smallvdots}{\scalebox{\smallscaleboxfactor}{\ensuremath{\vdots}}}

\renewcommand{\ge}{\geqslant}

\newcommand{\R}{\mathbb{R}}

\newcommand{\comma}{\,,}
\newcommand{\fullstop}{\,.}
\newcommand{\coloneq}{:=}

\newcommand{\isom}{\cong}
\newcommand{\adjoint}{\star}

\DeclareMathOperator{\tr}{tr}
\DeclareMathOperator{\sign}{sign}
\DeclareMathOperator{\im}{im}
\DeclareMathOperator{\Sym}{Sym}
\DeclareMathOperator{\End}{End}
\DeclareMathOperator{\SG}{S}
\DeclareMathOperator{\GLG}{GL}
\DeclareMathOperator{\OG}{O}

\newcommand{\hook}[1]{\ensuremath{h_{#1}}}

\hyphenation{Kil-ling}

\begin{document}

\begin{abstract}

	We use an isomorphism between the space of valence two Killing tensors on an $n$-dimensional constant sectional curvature
	manifold and the irreducible $\GLG(n+1)$-representation space of algebraic curvature tensors
	\cite{McLenaghan&Milson&Smirnov} in order to translate the Nijenhuis integrability conditions for a Killing tensor into
	purely algebraic integrability conditions for the corresponding algebraic curvature tensor, resulting in two simple
	algebraic equations of degree two and three.  As a first application of this we construct a new family of integrable Killing
	tensors.

\end{abstract}

\maketitle

\section{Introduction}

Besides the \name{Euler}-\name{Lagrange} formalism and the \name{Hamilton} formalism, the \name{Hamilton}-\name{Jacobi} equation
is one of the three fundamental reformulations of classical \name{Newton}ian mechanics with wide applications in physics as well
as mathematics, ranging from classical mechanics over optics and semi-classical quantum mechanics to \name{Riemann}ian geometry.
In many cases this first-order non-linear partial differential equation can be solved by a separation of variables after
choosing appropriate coordinates.  It is therefore a classical problem in \name{Riemann}ian geometry to classify such
coordinates \cite{Staeckel,Levi-Civita,Eisenhart,Kalnins&Miller80}.  The \name{Hamilton}-\name{Jacobi} equation separates in a
given system of orthogonal coordinates if and only if there exists an integrable valence two \name{Killing} tensor field with
simple eigenvalues whose eigenvectors are tangent to the coordinate lines and such that the potential satisfies a certain
compatibility condition involving this \name{Killing} tensor \cite{Benenti93}.  Integrable \name{Killing} tensors are thus an
important tool in the study of the seprarability of the \name{Hamilton}-\name{Jacobi} equation.  The present work focusses on
the integrability condition for \name{Killing} tensors.

\name{Killing} tensors form a linear space which is invariant under the pullback action of the manifold's isometry group.  In
other words they constitute a representation space of the isometry group.  \name{McLenaghan}, \name{Milson} and \name{Smirnov}
identified this representation in the case of constant sectional curvature manifolds as a certain irreducible representation of
the general linear group \cite{McLenaghan&Milson&Smirnov}.  More precisely, they used the isometric embeddings of the standard
models of constant sectional curvature manifolds as hypersurfaces $M$ in a \name{Euclid}ean vector space $(V,g)$ in order to
write \name{Killing} tensors as restrictions of homogeneous polynomials on $V$, where the coefficients obey certain symmetry
relations.  This yields in particular an explicit natural isomorphism between the space of valence two \name{Killing} tensors on
$M$ and the irreducible $\GLG(V)$-representation space of algebraic curvature tensors on the ambient space $V$.  Algebraic
curvature tensors are valence four tensors subject to the symmetries of a \name{Riemann}ian curvature tensor.  Furthermore, this
isomorphism is equivariant with respect to the action of the isometry group as a subgroup of $\GLG(V)$.

This is the starting point for the present work:  If the \name{Killing} tensor fields on a constant sectional curvature manifold
correspond bijectively to algebraic curvature tensors, i.\,e.\ simple algebraic objects, then their integrability must be
expressible as a purely algebraic condition on algebraic curvature tensors.  This idea leads finally -- after some tensor
calculus based on results from the theory of representations of the symmetric and general linear groups -- to the following
simple algebraic integrability conditions:

\begin{maintheorem}
	\label{thm:main}
	A \name{Killing} tensor on a constant sectional curvature manifold $M$ is integrable if and only if the associated algebraic
	curvature tensor $R$ on $V$ satisfies the following two conditions:
	\begin{subequations}
		\label{eq:main}
		\begin{align}
			\label{eq:main:1}
			\young(\atwo,\btwo,\ctwo,\dtwo)
			\bar g_{ij}
			R\indices{^i_{b_1\underline a_2\underline b_2}}
			R\indices{^j_{d_1\underline c_2\underline d_2}}
			&=0\\[\smallskipamount]
			\label{eq:main:2}
			\smash[t]{\young(\atwo,\btwo,\ctwo,\dtwo)}
			\young(\aone \bone \cone \done)
			\bar g_{ij}
			\bar g_{kl}
			R\indices{^i_{b_1\underline a_2\underline b_2}}
			R\indices{^j_{a_1}^k_{c_1}}
			R\indices{^l_{d_1\underline c_2\underline d_2}}
			&=0
			\comma
		\end{align}
	\end{subequations}
	where the operators on the left hand side are the \name{Young} symmetrisers for complete antisymmetrisation in the
	(underlined) indices $a_2,b_2,c_2,d_2$ respectively complete symmetrisation in the indices $a_1,b_1,c_1,d_1$.  The tensor
	$\bar g$ denotes the inner product $g$ on $V$ in case $M$ is not flat.  Otherwise, i.\,e.\ if $M\subset V$ is a hyperplane,
	$\bar g$ is the (degenerated) pullback of $g$ via the orthogonal projection $V\to M$.
\end{maintheorem}

This aproach to integrability of \name{Killing} tensor fields on constant sectional curvature manifolds has a certain number of
advantages.
The first and certainly the most important is, that we replace the \name{Nijenhuis} integrability conditions -- a complicated
non-linear system of partial differential equations for a tensor field on a manifold -- by two simple algebraic equations for a
tensor on a vector space.  On the one hand this simplifies a numerical treatment considerably.  Note that integrability can be
checked by a simple evaluation of polynomials of degree two and three.  On the other hand this opens the way for algebraic
methods.  Our formulation for example allowed us to show that the third of the \name{Nijenhuis} integrability conditions is
redundant for \name{Killing} tensors on constant sectional curvature manifolds.  In the special case of \name{Euclid}ean
$3$-space, this result was already mentioned in a footnote of \cite{Horwood&McLenaghan&Smirnov}, stating
	``Steve Czapor (private communication) has simplified the situation considerably.  Using Gröbner basis theory, he has shown
	that (4.4a) and (4.4b) imply (4.4c), for any Killing tensor $\mathbf K\in\mathcal K^2(\mathbb E^3)$.''%
\footnote{%
	Equations (4.4) therein are the \name{Nijenhuis} integrability conditions, c.\,f.\ \eqref{eq:TNS} here.
}

Moreover, we can exhibit a family of integrable \name{Killing} tensors which arises naturally from our algebraic description and
extends a known family \cite{Ibort&Magri&Marmo,Bolsinov&Matveev} which is based on the work of \name{Benenti}
\cite{Benenti92}:
\begin{maincorollary}
	\label{cor:main}
	There exists a family of integrable \name{Killing} tensors on a non-zero constant sectional curvature manifold, given by the
	algebraic curvature tensors
	\begin{align}
		\label{eq:family}
		R&
		=\lambda_2h\varowedge h
		+\lambda_1h\varowedge g
		+\lambda_0g\varowedge g&
		h&\in\Sym^2V&
		\lambda_0,\lambda_1,\lambda_2&\in\mathbb R
		\fullstop
	\end{align}
	Here $\Sym^2V$ is the space of symmetric $2$-tensors\footnote{Without loss of generality $h$ can be supposed trace free.}
	and $\varowedge$ denotes the \name{Kulkarni}-\name{Nomizu} product.  The corresponding \name{Killing} tensors read in local
	coordinates
	\begin{multline*}
		K_{\alpha\beta}
		=2\lambda_2(h_{a_1a_2}h_{b_1b_2}-h_{a_1b_2}h_{b_1a_2})x^{a_1}x^{a_2}\partial_\alpha x^{b_1}\partial_\beta x^{b_2}\\
		+\lambda_1
			(h_{a_1a_2}x^{a_1}x^{a_2}g_{\alpha\beta}-h_{b_1b_2}\partial_\alpha x^{b_1}\partial_\beta x^{b_2})
		+2\lambda_0g_{\alpha\beta}
		\comma
	\end{multline*}
	where the vector components $x^i$ in $V$ are regarded as functions on $M$ by restriction.
\end{maincorollary}
For \name{Euclid}ean $3$-space a complete description of integrable \name{Killing} tensors has been obtained using computer
algebra by \name{Horwood}, \name{McLenaghan} and \name{Smirnov} based on the prior knowledge of the separable coordinate webs,
but a general solution of the integrability conditions has so far been considered intractable \cite{Horwood&McLenaghan&Smirnov}.
Our algebraic equations now render this feasible at least in dimension three.%
\footnote{%
	Note that dimension three of the manifold means dimension four of the ambient vector space.
}
This goes beyond the scope of this article and will be the subject of a forthcoming paper \cite{Schoebel}.

In this context it is noteworthy that the first algebraic integrability condition can be recast into a variety of different
forms.  In terms of the curvature form $\Omega\in\End(V)\otimes\Lambda^2V$ associated ot the algebraic curvature tensor $R$,
condition \eqref{eq:main:1} reads
\[
	\Omega\wedge\Omega=0
	\comma
\]
where the wedge denotes the ususal exterior product in the form component and matrix multiplication in the endomorphism
component.  Another equivalent form, which makes more explicit the index symmetries in terms of
$\GLG(V)$-ir\-re\-pre\-sen\-ta\-tions, is
\begin{equation}
	\label{eq:main:Young}
	\young(\btwo\bone\done,\ctwo,\dtwo,\atwo)
	g_{ij}
	R\indices{^i_{b_1a_2b_2}}
	R\indices{^j_{d_1c_2d_2}}
	=0
	\comma
\end{equation}
where the operator on the left is the \name{Young} symmetriser antisymmetrising first in $a_2,b_2,c_2,d_2$ and then symmetrising
in $b_2,b_1,d_1$.  Similar forms can be obtained for the second algebraic integrability condition \eqref{eq:main:2}.

The second and related advantage of our approach is, that the above algebraic formulation offers new insight into integrability
from the perspectives of representation theory and algebraic geometry as well as geometric invariant theory.  To illustrate
this, regard the solutions of the first integrability condition as the algebraic variety given as the vanishing locus of the
following composed map $\pi\circ\nu$ (where the spaces are denoted for convenience by their corresponding $\GLG(V)$-isomorphism
class):
\pagebreak
\begin{equation}
	\label{eq:zerolocus}
	\begin{array}{rcccl}
		\{\textYF\yng(2,2)\}
		&\xrightarrow{\;\;\nu\;\;}&\Sym^2\{\textYF\yng(2,2)\}
		&\xrightarrow{\;\;\pi\;\;}&\left\{\textYF\yng(3,1,1,1)\right\}\\[\medskipamount]
		\nu\colon R_{a_1b_1a_2b_2}
		&\mapsto&R_{a_1b_1a_2b_2}R_{c_1d_1c_2d_2}\\
		&&\pi\colon T_{a_1b_1a_2b_2c_1d_1c_2d_2}
		&\mapsto&
			\smash[t]{\young(\btwo\bone\done,\ctwo,\dtwo,\atwo)}
			g^{a_1c_1}
			T_{a_1b_1a_2b_2c_1d_1c_2d_2}
			\fullstop
	\end{array}
\end{equation}
The first space is the space of algebraic curvature tensors on $V$ and the second its symmetric product.  The third space is the
image of the \name{Young} symmetriser in $V^{\otimes6}$.
The map $\pi$ is simply a projection given by an index contraction and a projection to an irreducible $\GLG(V)$-representation,
both commuting.  If we pass to the projectivisation of $\pi\circ\nu$,
\[
	\mathbb P(\pi\circ\nu)\colon\;\mathbb P\{\textYF\yng(2,2)\}
	\;\xrightarrow{\;\;\mathbb P\nu\;\;}\;\mathbb P\Sym^2\{\textYF\yng(2,2)\}
	\;\xrightarrow{\;\;\mathbb P\pi\;\;}\;\mathbb P\left\{\textYF\yng(3,1,1,1)\right\}\\[\medskipamount]
	\comma
\]
then the map $\mathbb P\nu$ is nothing else than the \name{Veronese} embedding.  This allows a geometric interpretation of the
first integrability condition's (projectivised) solution space as the intersection of a \name{Veronese} variety with a certain
projective subspace.  The same is true for the second integrability condition.  Of course, every projective variety is
isomorphic to an intersection of a \name{Veronese} variety with a linear space \cite{Harris}, but here all spaces and maps are
given explicitly.

Note also that in our algebraic setting an isometry invariant characterisation of the integrability of \name{Killing} tensors as
in \cite{Horwood&McLenaghan&Smirnov} or \cite{Horwood} reduces to chosing a suitable set of isometry invariants for algebraic
curvature tensors and finding the restrictions imposed on them by the equations \eqref{eq:main}.  This is essentially a problem
in geometric invariant theory.  Due to its importance in general relativity, a variety of such sets have already been proposed
in the case of four-dimensional \name{Lorentz} space.%
\footnote{%
	See \cite{Lim&Carminati.I} and the references therein.
}

Thirdly, we emphasise that our aproach is completely generic in the sense that is does not depend neither on the dimension of
the manifold, nor the value of the constant sectional curvature nor the signature of the pseudo-\name{Riemann}ian metric.  This
becomes manifest in the fact that these data enter the algebraic integrability conditions \eqref{eq:main} only via the signature
and the rank of $\bar g$.  We also remark that our approach is coordinate free.  We do not rely on any particular choice of
coordinates neither on the manifold, nor on the space of \name{Killing} tensors.

Finally, a last but not less important advantage comes from the fact that our algebraic equations are polynomials in a curvature
tensor.  Note that we owe this circumstance to the fact that \name{Killing} tensors on constant sectional curvature manifolds
are described by algebraic curvature tensors and not any other representation space of the isometry group.  This is a rather
fortunate happenstance, because algebraic properties of curvature tensors are extensively studied -- both in differential
geometry as well as in mathematical physics.  Especially the \name{Lorentz}ian case, focus of interest in general relativity, is
important here as it corresponds to hyperbolic space.  In the \name{Riemann}ian case, corresponding to spheres, we even have a
complete classification of the symmetry classes of the \name{Riemann} polynomials appearing in \eqref{eq:main} with respect to
the isometry group $\OG(V,g)\subset\GLG(V)$.  Our methods are inspired by the corresponding article of \name{Fulling} et al.\
\cite{Fulling&King&Wybourne&Cummins}, although we do not rely on results presented there.

\smallskip
We hope that our work will pave the way for an algebraic approach to the study of integrable \name{Killing} tensors and
seperable coordinates.  To this end we list some suggestions for future research based on our results:
\begin{itemize}
	\item \emph{Algebraic interpretation of other families of integrable \name{Killing} tensors,}
		such as the one arising from special conformal \name{Killing} tensors
		\cite{Crampin&Sarlet&Thompson,Crampin&Sarlet01,Crampin03a}, cofactor systems
		\cite{Rauch-Wojciechowski&Marciniak&Lundmark,Lundmark,Rauch-Wojciechowski} or bi-quasi-\name{Hamilton}ian systems
		\cite{Crampin&Sarlet02,Crampin03b} (see also \cite{Benenti05}).  We do not yet fully understand how this --
		geometrically constructed -- family translates to our algebraic framework, but we believe there is a simple algebraic
		interpretation.  Vice versa, we neither know a geometric interpretation of the -- algebraically constructed -- family we
		constructed in the present work.
	\item \emph{An algebraic compatibility condition for the potential.}
		So far we disregarded the compatibility condition for the potential in the \name{Hamilton}-\name{Jacobi} equation.  As
		integrability, it should be expressible entirely in algebraic terms as well.
	\item \emph{Explicit solution of the algebraic integrability conditions.}
		It is possible to solve the algebraic integrability conditions explicitely in dimension three.  This has been done for
		$3$-spheres in \cite{Schoebel} and straightforwardly carries over to \name{Euclid}ean $3$-space.  In higher dimensions
		they can be solved using computer algebra, by means of \name{Gröbner} bases for example.
	\item \emph{Study of the algebraic variety of integrable Killing tensors,}
		defined by the algebraic integrability conditions, especially its dimension.
\end{itemize}

This paper is organised as follows.  In the next section we briefly recall \name{Killing} tensors, constant sectional curvature
manifolds and the notion of integrability in this context.  In \linkref{section}{sec:Benenti} we regard a special family of
integrable \name{Killing} tensors.  This is followed by a short review of some necessary facts from the representation theory of
symmetric and linear groups in \linkref{section}{sec:representationtheory}, which can be skipped by a reader familiar with them.
After that we restate the algebraic characterisation of \name{Killing} tensors in \linkref{section}{sec:keyresult} for our
needs.  \linkref{Section}{sec:K} is the main part, where we derive the algebraic integrability conditions.  As a first
application of them, we extend the family from \linkref{section}{sec:Benenti} in the \hyperref[sec:application]{last section}.

\section{Preliminaries}
\label{sec:preliminaries}

\subsection{\name{Killing} tensors}

Recall that a \dfn{\name{Killing} vector} on a (pseudo-)\name{Riemann}ian manifold $(M,g)$ is a vector field $K^\alpha$ on $M$
satisfying the \name{Killing} equation
\[
	\nabla^{(\alpha}K^{\beta)}=0
\]
where $\nabla$ is \name{Levi}-\name{Civita} connection of $g$ and round brackets denote complete symmetrisation in the enclosed
indices.

\begin{definition}
	A \dfn{\name{Killing} tensor} on $M$ is a symmetric $(2,0)$-tensor field $K^{\alpha\beta}$ satisfying the generalised
	\name{Killing} equation
	\begin{equation}
		\label{eq:Killing}
		\nabla^{(\alpha}K^{\beta\gamma)}=0
		\fullstop
	\end{equation}
\end{definition}

Geometrically, a tensor $K^{\alpha\beta}$ is a \name{Killing} tensor if and only if the amount $K^{\alpha\beta}\dot x_\alpha\dot
x_\beta$ is constant on geodesics.
\pagebreak

\begin{examples}
	\enumskip
	\begin{enumerate}
		\item
			The metric is a \name{Killing} tensor, since it is covariantly constant.
		\item
			As a consequence of the \name{Leibniz} rule the symmetrised tensor product of two \name{Killing} vectors is a
			\name{Killing} tensor.
		\item
			The pullback of a \name{Killing} tensor under an isometry of $M$ is again a \name{Killing} tensor.
	\end{enumerate}
\end{examples}

\subsection{Integrability}

The metric establishes an isomorphism between the tangent space and its dual.  This identifies co- and contravariant tensor
components via lowering or rising indices using the metric.  In particular a \name{Killing} tensor can be identified with a
$(0,2)$-tensor or a $(1,1)$-tensor, the latter being an endomorphism field on $M$.

\begin{definition}
	A $(1,1)$-tensor field $K$ on $M$ is \dfn{integrable}, if almost every point on $M$ admits a neighbourhood with local
	coordinates $x_\alpha$ such that the corresponding coordinate vector fields $\partial_\alpha$ are eigenvector fields of $K$.
\end{definition}

Integrability can be characterised using the \name{Nijenhuis} torsion
\[
	N(X,Y)\coloneq K^2[X,Y]-K[KX,Y]-K[X,KY]+[KX,KY]
	\comma
\]
given in local coordinates by
\begin{equation}
    \label{eq:Nijenhuis}
	N\indices{^\alpha_{\beta\gamma}}
	=K\indices{^\alpha_\delta}\nabla_{[\gamma}K\indices{^\delta_{\beta]}}
	+\nabla_\delta K\indices{^\alpha_{[\gamma}}K\indices{^\delta_{\beta]}}
    \comma
\end{equation}
where square brackets denote complete antisymmetrisation in the enclosed indices \cite{Nijenhuis}:

\begin{theorem}
	Let $N$ be the \name{Nijenhuis} torsion of $K$.  Then $K$ is integrable if and only if the following conditions hold:
	\begin{subequations}
		\label{eq:TNS}
		\begin{align}
			\label{eq:TNS:1}0&=N\indices{^\delta_{[\beta\gamma}}g_{\alpha]\delta}\\
			\label{eq:TNS:2}0&=N\indices{^\delta_{[\beta\gamma}}K_{\alpha]\delta}\\
			\label{eq:TNS:3}0&=N\indices{^\delta_{[\beta\gamma}}K_{\alpha]\varepsilon}K\indices{^\varepsilon_\delta}
		\end{align}
	\end{subequations}
\end{theorem}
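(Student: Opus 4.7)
This is the classical characterisation due to \name{Nijenhuis}, and my plan is to prove both implications by applying \name{Frobenius}' integrability theorem to the eigendistributions of $K$ and their direct sums.

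For the forward direction, I would pass to local coordinates in which $K\indices{^\alpha_\beta}=\lambda_\alpha\delta^\alpha_\beta$ (no sum on $\alpha$). Since $K$ is $g$-symmetric, at points where the eigenvalues are pairwise distinct the eigendistributions are mutually $g$-orthogonal, and the coordinates can be taken so that $g_{\alpha\beta}$ is diagonal as well. Substituting into \eqref{eq:Nijenhuis} then gives an $N\indices{^\alpha_{\beta\gamma}}$ whose only non-vanishing components carry $\alpha\in\{\beta,\gamma\}$, with coefficients of the form $(\lambda_\alpha-\lambda_\beta)\partial_\gamma\lambda_\alpha$ antisymmetrised in $\beta,\gamma$. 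After the outer bracket $[\alpha\beta\gamma]$, each of \eqref{eq:TNS:1}--\eqref{eq:TNS:3} cancels termwise; this direction is essentially bookkeeping.

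The converse is the substantive step. My plan is to use \eqref{eq:TNS} to show that every partial sum $\bigoplus_{i\in S}E_i$ of eigendistributions of $K$ is involutive, so that the joint \name{Frobenius} theorem then produces local coordinates whose coordinate vector fields span the individual eigenlines and are therefore eigenvectors of $K$. In a pointwise eigenframe, involutivity of $\bigoplus_{i\in S}E_i$ reduces to the vanishing of the components of $N(X,Y)$ in the complementary eigenspaces $E_j$, $j\notin S$, for arbitrary $X,Y$ in the sum. These components are extracted tensorially by contracting $N$ against the eigenprojectors $P_j$, and by \name{Lagrange} interpolation each $P_j$ is a polynomial in $K$ of degree at most $n-1$, so a priori the involutivity system lives in the span of the contractions of $N$ with $K^0,K^1,\ldots,K^{n-1}$.

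The main obstacle is to show that, against the totally antisymmetric bracket $[\alpha\beta\gamma]$ and using the $g$-symmetry of $K$, this a priori $n$-parameter system collapses to exactly the three conditions \eqref{eq:TNS}. I would carry this out by an explicit computation in a generic eigenframe: the sparse index pattern of $N\indices{^\alpha_{\beta\gamma}}$ combined with the three-index antisymmetrisation admits only three linearly independent eigenvalue combinations, and a \name{Vandermonde}-type argument on triples of distinct eigenvalues identifies these with the contractions by the identity, $K$ and $K^2$ --- precisely \eqref{eq:TNS:1}, \eqref{eq:TNS:2} and \eqref{eq:TNS:3}. With this algebraic identity in hand, \name{Frobenius}' theorem delivers the desired adapted coordinates and the theorem follows.
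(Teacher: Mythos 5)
The paper does not actually prove this theorem: it is quoted as a classical result with a reference to \cite{Nijenhuis}, so there is no in-paper argument to measure yours against. Your plan is essentially the original \name{Nijenhuis} argument and I see no structural flaw in it. The forward direction is indeed bookkeeping: in an adapted chart the \name{Christoffel} symbols cancel out of \eqref{eq:Nijenhuis}, $N\indices{^\alpha_{\beta\gamma}}$ is supported on $\alpha\in\{\beta,\gamma\}$, and contracting with any tensor diagonal in the eigenframe produces terms that are symmetric under the transposition singled out by the surviving \name{Kronecker} delta, so the bracket $[\alpha\beta\gamma]$ annihilates them. For the converse, the decisive point --- which you state correctly but should write out --- is that in a $g$-orthogonal eigenframe each of \eqref{eq:TNS:1}--\eqref{eq:TNS:3} reduces, for a fixed triple $(\alpha,\beta,\gamma)$, to a single linear equation in the three independent components $N_{\alpha\beta\gamma}$, $N_{\beta\gamma\alpha}$, $N_{\gamma\alpha\beta}$ of the lowered torsion, with coefficients $\lambda^m_\bullet$ for $m=0,1,2$. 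The $3\times3$ \name{Vandermonde} system gives $N_{\alpha\beta\gamma}=0$ whenever $\lambda_\alpha$ differs from both $\lambda_\beta$ and $\lambda_\gamma$; the degenerate cases $\lambda_\beta=\lambda_\gamma\neq\lambda_\alpha$ and $\lambda_\alpha=\lambda_\beta\neq\lambda_\gamma$ still deliver exactly the vanishing needed for involutivity while constraining nothing inside a single eigenspace --- this is precisely why three powers of $K$ suffice and why contractions with $K^m$, $m\ge3$, add nothing. Combined with the identity $P_jN(X,Y)=(\lambda_j-\lambda_i)(\lambda_j-\lambda_k)P_j[X,Y]$ for $X\in E_i$, $Y\in E_k$, $j\neq i,k$, which is the precise content of your ``extraction by eigenprojectors'', this gives involutivity of all partial sums of eigendistributions, and the simultaneous \name{Frobenius} theorem finishes the proof.

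Two caveats, both inherited from the statement rather than from your argument. First, the converse presupposes that $K$ is pointwise diagonalisable over $\R$ with $g$-nondegenerate eigenspaces. This is automatic in the \name{Riemann}ian case but fails for indefinite $g$: a parallel $g$-symmetric $K$ with a complex-conjugate eigenvalue pair satisfies \eqref{eq:TNS} vacuously yet is not integrable in the stated sense. This hypothesis must therefore be carried along explicitly (as it implicitly is in \cite{Nijenhuis}). Second, your forward direction takes $g$ diagonal simultaneously with $K$, which is only guaranteed where the eigenvalues are simple; for repeated eigenvalues work instead on the dense open set of locally constant multiplicities, where $g_{\alpha\beta}\neq0$ forces $\lambda_\alpha=\lambda_\beta$ as functions and the same cancellation goes through.
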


\subsection{Manifolds with constant sectional curvature}

The \name{Killing} equation is linear, so the set of \name{Killing} tensors on $M$ is a vector space.  Its maximal dimension is
\begin{align}
	\label{eq:maxdim}
	&\frac{n(n+1)^2(n+2)}{12}
	&n&=\dim M
\end{align}
and will be attained if and only if $M$ has constant sectional curvature \cite{Thompson,Wolf98}.  Every
(pseudo-)\name{Riemann}ian manifold with constant sectional curvature is (up to a rescaling) locally isometric to one of the
standard models below.  This fact allows us to restrict all subsequent considerations to these standard models.

\begin{examples}[Standard models of manifolds with constant sectional curvature]
	\label{ex:models}
	Let $V$ be a vector space of dimension $N\coloneq n+1$, equipped with a non-degenerate inner product $g$ of signature
	$(p,q)$.  Then the (pseudo-)sphere
	\[
		M\coloneq\{x\in V\colon g(x,x)=1\}
	\]
	is an $n$-dimensional (pseudo-)\name{Riemann}ian manifold of constant sectional curvature with respect to the metric
	obtained by restricting $g$ to $M$.  For $(p,q)=(n+1,0)$ this is the standard sphere and for $(p,q)=(n,1)$ the standard
	hyperbolic space.  For other choices of the signature we obtain the different \name{de Sitter} and anti \name{de Sitter}
	spaces.  The isometry group of $M$ is the (pseudo-)orthogonal group $\OG(V,g)\subset\GLG(V)$ acting on $M$ by restriction.

	We can incorporate flat space in this pattern by embedding it as the hyperplane
	\[
		M\coloneq\{x\in V\colon g(x,u)=1\}
	\]
	in $V$ for some fixed normal vector $u\in V$.  The corresponding isometry group is then embedded in $\GLG(V)$ as the semi-direct
	product $M\rtimes\OG(M,g)$.  Although our approach goes through for flat spaces as well, this case often has to be treated
	seperately.
\end{examples}

\section{\name{Benenti} tensors}
\label{sec:Benenti}

For the time being let the inner product $g$ be positive definite so that $M\subset V$ is the unit sphere.  Consider the
diffeomorphism
\begin{equation}
	\label{eq:map}
	\begin{array}{rrcl}
		f_A\colon&M&\to&M\\
		&x&\mapsto&f_A(x)\coloneq\frac{Ax}{\lVert Ax\rVert}
	\end{array}
\end{equation}
for some fixed $A\in\GLG(V)$.  Since $A$ is linear, $f_A$ maps great circles to great circles.  This means that the metric $g$ and
its pullback $g_A\coloneq f_A^*g$ have the same (unparametrised) geodesics, so we can apply the following theorem
\cite{Matveev&Topalov98,Matveev&Topalov00}:

\begin{theorem}
	\label{thm:Matveev-Topalov}
	If two metrics $g$ and $g_A$ on an $n$-dimensional (pseudo-)\name{Riemann}ian manifold have the same unparametrised
	geodesics, then
	\[
		K\coloneq\left(\frac{\det g}{\det g_A}\right)^\frac2{n+1}g_A
	\]
	is a \name{Killing} tensor with respect to $g$.
\end{theorem}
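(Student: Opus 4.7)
The plan is to reduce the theorem to two classical computations about projectively equivalent metrics: the \name{Levi}-\name{Civita} normal form for the difference of their \name{Christoffel} symbols, and the conformal factor forced on us by tracing over the volume densities. The power $2/(n+1)$ in the statement then drops out of a one-line cancellation. Throughout I treat $K$ as the symmetric $(0,2)$-tensor $K_{ij}\coloneq\rho\,(g_A)_{ij}$; the translation to the form $\nabla^{(\alpha}K^{\beta\gamma)}=0$ in the paper's definition is automatic because $\nabla$ is metric.

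First I would invoke \name{Levi}-\name{Civita}'s theorem: two metrics $g$ and $\bar g\coloneq g_A$ share their unparametrised geodesics if and only if there is a 1-form $\psi$ on $M$ with
\[
\bar\Gamma^k_{ij}-\Gamma^k_{ij}=\delta^k_i\psi_j+\delta^k_j\psi_i,
\]
where $\Gamma$ and $\bar\Gamma$ denote the \name{Christoffel} symbols of $g$ and $\bar g$. Contracting $k=j$ and using $\Gamma^k_{ki}=\tfrac12\partial_i\log|\det g|$ (and the analogous identity for $\bar\Gamma$) forces $\psi=d\phi$ with $\phi=\tfrac{1}{2(n+1)}\log\bigl|\det g_A/\det g\bigr|$, hence
\[
e^{-4\phi}=\left(\frac{\det g}{\det g_A}\right)^{\!2/(n+1)}.
\]
Expanding $\bar\nabla\bar g=0$ in terms of $\nabla=\nabla^g$ via the above relation between the two connections then yields the key identity
\[
\nabla_k(g_A)_{ij}=2\psi_k(g_A)_{ij}+\psi_i(g_A)_{jk}+\psi_j(g_A)_{ik}.
\]

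Finally I would set $\rho\coloneq e^{-4\phi}$ and check the \name{Killing} equation $\nabla_{(i}K_{jk)}=0$ directly from the product rule. The derivative of $\rho$ contributes $-4\rho\,\psi_i(g_A)_{jk}$ and cyclic partners, while the derivative of $g_A$ contributes, via the key identity, three $\psi$-$g_A$ terms per index slot. After symmetrising in $(i,j,k)$, each of the three distinct expressions $\psi_i(g_A)_{jk}$, $\psi_j(g_A)_{ki}$, $\psi_k(g_A)_{ij}$ picks up total coefficient $+4\rho$ from the $\nabla g_A$ side and $-4\rho$ from the $d\rho$ side, and the whole sum cancels identically.

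There is no serious obstacle: the one numerical coincidence to verify is that the factor $4\phi$ forced by the volume-density trace is precisely the one that annihilates the $\psi$-terms in the final symmetrisation, which is what pins down the universal exponent $2/(n+1)$ in dimension $n$. The only book-keeping point of care is the coefficient $4$, which arises because $\psi_k$ enters the key identity with weight $2$ while $\psi_i,\psi_j$ enter with weight $1$, giving $2+1+1=4$ after cycling in $(i,j,k)$.
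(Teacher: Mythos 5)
Your argument is correct, but note that the paper does not prove this statement at all: Theorem \ref{thm:Matveev-Topalov} is imported as a black box from the cited work of \name{Matveev} and \name{Topalov}, and the paper only ever \emph{applies} it (to the pullback metrics $g_A$ in \linkref{section}{sec:Benenti}). What you have written is therefore a self-contained replacement for that citation, and it is the classical one: the \name{Levi}-\name{Civita}/\name{Weyl} normal form $\bar\Gamma^k_{ij}-\Gamma^k_{ij}=\delta^k_i\psi_j+\delta^k_j\psi_i$ for projectively equivalent connections, the trace $\Gamma^k_{ki}=\tfrac12\partial_i\log|\det g|$ pinning down $\psi=d\phi$ with $\phi=\tfrac1{2(n+1)}\log|\det g_A/\det g|$, and the identity $\nabla_k(g_A)_{ij}=2\psi_k(g_A)_{ij}+\psi_i(g_A)_{jk}+\psi_j(g_A)_{ik}$ obtained by expanding $\bar\nabla g_A=0$. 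Your bookkeeping checks out: cyclically symmetrising $\nabla_k\bigl(e^{-4\phi}(g_A)_{ij}\bigr)$ gives each term $\psi_k(g_A)_{ij}$ the coefficient $2+1+1=4$ from the connection difference and $-4$ from $d(e^{-4\phi})$, so the exponent $2/(n+1)$ is exactly the one that makes the \name{Killing} equation close; and since $\nabla g=0$, the $(0,2)$ and $(2,0)$ forms of the \name{Killing} equation are interchangeable, as you note. The only caveat is that you use the projective-equivalence normal form itself without proof; it is standard (polarise $A^k_{ij}v^iv^jparallel$ to $v^k$), but if this proof were to stand in place of the citation that half-page should be included. What the direct proof buys is transparency about where the exponent comes from; what the citation buys the paper is brevity and access to the stronger structural results of \name{Matveev}--\name{Topalov} that are not needed here.
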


\begin{corollary}
	Let $M\subset V$ be the unit sphere.  Then the group $\GLG(V)$ generates a family of \name{Killing} tensors on $M$ given by
	\begin{equation}
		\label{eq:Benenti}
		K_x(v,w)=g(Ax,Ax)g(Av,Aw)-g(Ax,Av)g(Ax,Aw)
	\end{equation}
	for $v,w\in T_xM$ and $A\in\GLG(V)$.
\end{corollary}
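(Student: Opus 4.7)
The strategy is to apply Theorem~\ref{thm:Matveev-Topalov} to the pullback metric $g_A \coloneq f_A^*g$, which has the same unparametrised geodesics as $g$ since $f_A$ maps great circles to great circles. The claim will then follow once I show that the resulting Matveev--Topalov Killing tensor coincides with $K_x$ up to a nonzero scalar depending only on $A$.

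First, I would differentiate the map~\eqref{eq:map} along a tangent vector $v \in T_xM = x^\perp$ to obtain
\[
df_A|_x(v) = \frac{1}{\|Ax\|}\Bigl(Av - \frac{g(Ax,Av)}{g(Ax,Ax)}\,Ax\Bigr),
\]
namely the orthogonal projection of $Av$ onto $(Ax)^\perp$ rescaled by $\|Ax\|^{-1}$. A direct expansion of $g_A(v,w) = g(df_A(v), df_A(w))$ then gives
\[
g(Ax,Ax)^2\, g_A(v,w) = g(Ax,Ax)\,g(Av,Aw) - g(Ax,Av)\,g(Ax,Aw) = K_x(v,w),
\]
identifying the proposed formula with $g_A$ up to the explicit pointwise factor $g(Ax,Ax)^2$.

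Next, I would compute the conformal factor $(\det g / \det g_A)^{2/(n+1)}$ appearing in Theorem~\ref{thm:Matveev-Topalov}. The cleanest route is to extend orthonormal bases $e_1,\ldots,e_n$ of $T_xM$ and $f_1,\ldots,f_n$ of $(Ax)^\perp$ by $e_0 \coloneq x$ and $f_0 \coloneq Ax/\|Ax\|$ to orthonormal bases of $V$. In these bases the matrix of $A$ is block-triangular of the form
\[
\begin{pmatrix} \|Ax\| & \ast \\ 0 & \|Ax\|\,[df_A|_x] \end{pmatrix}\!,
\]
the zeros in the first column coming from $g(f_i, Ax) = 0$ for $i \ge 1$, and the lower-right block arising because the $Ax$-component of $Ae_j$ is killed by projection onto $(Ax)^\perp$. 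Taking determinants gives $|\det A| = \|Ax\|^{n+1}\,|\det df_A|_x|$, and since $\det g_A / \det g = (\det df_A|_x)^2$ in an orthonormal frame, I conclude
\[
\Bigl(\frac{\det g}{\det g_A}\Bigr)^{\!2/(n+1)} = \frac{g(Ax,Ax)^2}{|\det A|^{4/(n+1)}}.
\]

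Combining both steps, Theorem~\ref{thm:Matveev-Topalov} yields the Killing tensor
\[
\Bigl(\frac{\det g}{\det g_A}\Bigr)^{\!2/(n+1)}\! g_A = \frac{g(Ax,Ax)^2}{|\det A|^{4/(n+1)}}\,g_A = \frac{K_x}{|\det A|^{4/(n+1)}},
\]
a nonzero constant (in $x$) multiple of $K_x$. Since Killing tensors form a vector space, $K_x$ is itself a Killing tensor. The only non-routine step is the block-matrix determinant computation, which cleanly encodes the radial/tangential decomposition at $x$; everything else is mechanical expansion of the pullback.
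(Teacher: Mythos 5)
Your proposal is correct and follows essentially the same route as the paper: differentiate $f_A$, expand the pullback metric to identify it with $K_x/g(Ax,Ax)^2$, compute $\det g_A/\det g=(\det A/\lVert Ax\rVert^{n+1})^2$, and invoke Theorem~\ref{thm:Matveev-Topalov}. The only (cosmetic) difference is that you obtain the determinant ratio from a block-triangular matrix of $A$ in adapted orthonormal bases, whereas the paper gets the same identity by column operations in the $(n+1)\times(n+1)$ determinant $\det\bigl(\tfrac{Ax}{\lVert Ax\rVert},(df_A)_xe_1,\ldots,(df_A)_xe_n\bigr)$.
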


\begin{proof}
	Via the differential of \eqref{eq:map},
	\begin{equation}
		\label{eq:differential}
		(df_A)_xv=\frac{g(Ax,Ax)Av-g(Ax,Av)Ax}{\lVert Ax\rVert^3}
		\comma
	\end{equation}
	one computes the pullback metric $g_A=f_A^*g$ as
	\[
		(g_A)_x(v,w)=\frac{g(Ax,Ax)g(Av,Aw)-g(Ax,Av)g(Ax,Aw)}{\lVert Ax\rVert^4}
		\fullstop
	\]
	To compute its determinant at a point $x$, choose an orthonormal basis $e_1,\ldots,e_n$ of $T_xM$ and extend it with the
	vector $x$ to an orthonormal basis of $V$.  Since $Ax/\lVert Ax\rVert$ is a unit vector normal to $T_{f_A(x)}M$, we have
	\begin{align*}
		\left(\frac{\det g_A}{\det g}\right)_x
		&={\det}^2(df_A)_x
		={\det}^2\left(\tfrac{Ax}{\lVert Ax\rVert},(df_A)_xe_1,\ldots,(df_A)_xe_n\right)\\
		&={\det}^2\left(\tfrac{Ax}{\lVert Ax\rVert},\tfrac{Ae_1}{\lVert Ax\rVert},\ldots,\tfrac{Ae_n}{\lVert Ax\rVert}\right)
		=\left(\frac{\det A}{\lVert Ax\rVert^{n+1}}\right)^2
		\comma
	\end{align*}
	where we used \eqref{eq:differential} for the third equality.  The claim now follows from
	\linkref{theorem}{thm:Matveev-Topalov}, since
	\[
		\left(\frac{\det g}{\det g_A}\right)^\frac2{n+1}g_A
		=
		\left(\frac{\lVert Ax\rVert^{n+1}}{\det A}\right)^\frac4{n+1}
		\frac{g(Ax,Ax)g(Av,Aw)-g(Ax,Av)g(Ax,Aw)}{\lVert Ax\rVert^4}
		\comma
	\]
	differs from \eqref{eq:Benenti} by a constant.
\end{proof}

If $g$ is not positive definite, the map \eqref{eq:map} is not everywhere well defined, but formula \eqref{eq:Benenti} still
gives a well defined \name{Killing} tensor, as we will see in \linkref{section}{sec:keyresult}.  For flat space the result is
analogous, only with more complicated expressions.

\begin{corollary}
	Let $M\subset V$ be a hyperplane with normal $u$.  Then the group $\GLG(V)$ generates a family of \name{Killing} tensors on
	$M$ given by
	\begin{multline}
		\label{eq:Benenti:flat}
		K_x(v,w)=g(Ax,u)g(Ax,u)g(Av,Aw)-g(Ax,u)g(Ax,Av)g(Aw,u)\\-g(Ax,u)g(Ax,Aw)g(Av,u)+g(Ax,Ax)g(Av,u)g(Aw,u)
		\fullstop
	\end{multline}
\end{corollary}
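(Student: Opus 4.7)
The plan is to imitate the proof of the sphere corollary verbatim, replacing the unit-length normalisation $Ax/\lVert Ax\rVert$ with the projective normalisation onto the affine hyperplane. Concretely, for $A\in\GLG(V)$ define
\[
	f_A(x)\coloneq\frac{Ax}{g(Ax,u)}\comma\qquad x\in M\fullstop
\]
Since $g(f_A(x),u)=1$, this maps $M$ into $M$ on the open set where $g(Ax,u)\neq 0$, and as the restriction to $M$ of a projective transformation of $V$ it sends straight lines to straight lines. The geodesics of the flat metric on $M$ being straight lines, $g$ and $g_A\coloneq f_A^*g$ share the same unparametrised geodesics, and \linkref{Theorem}{thm:Matveev-Topalov} produces a \name{Killing} tensor proportional to $g_A$.

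A direct computation of
\[
	(df_A)_x(v)=\frac{g(Ax,u)\,Av-g(Av,u)\,Ax}{g(Ax,u)^2}
\]
and expansion of $g((df_A)_xv,(df_A)_xw)$ show that $(g_A)_x(v,w)=K_x(v,w)/g(Ax,u)^4$, where $K_x(v,w)$ is precisely the right-hand side of \eqref{eq:Benenti:flat}. It thus remains to verify that the conformal factor $(\det g/\det g_A)^{2/(n+1)}$ arising from \linkref{Theorem}{thm:Matveev-Topalov} cancels this offending scalar $g(Ax,u)^{-4}$, so that $K$ itself is a \name{Killing} tensor up to an overall constant independent of $x$.

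The main obstacle — and the only step where the computation diverges nontrivially from the sphere case — is the evaluation of $\det(df_A)_x$. Here a feature absent on the sphere actually helps: the tangent space $T_xM=u^\perp$ is the same for every $x\in M$, so $(df_A)_x$ is an endomorphism of a single fixed subspace. Writing $(df_A)_x=g(Ax,u)^{-2}B|_{u^\perp}$ with
\[
	B\coloneq g(Ax,u)\,A-(Ax)\otimes g(A\,\cdot\,,u)
\]
a rank-one perturbation of $g(Ax,u)A$ on $V$, one extends $B|_{u^\perp}$ to an endomorphism of $V$ by declaring $x\mapsto g(Ax,u)\,x$ and applies the matrix determinant lemma (equivalently \name{Sylvester}'s identity combined with the \name{Schur} complement formula for $\det A$ with respect to the decomposition $V=u^\perp\oplus\operatorname{span}(x)$) to obtain
\[
	\det(B|_{u^\perp})=g(Ax,u)^{n-1}\det A\comma\qquad\det(df_A)_x=\frac{\det A}{g(Ax,u)^{n+1}}\fullstop
\]
Substitution into \linkref{Theorem}{thm:Matveev-Topalov} yields $(\det g/\det g_A)^{2/(n+1)}g_A=(\det A)^{-4/(n+1)}K$, so $K$ is \name{Killing} on the open set $\{g(Ax,u)\neq 0\}$, and hence on all of $M$ since \eqref{eq:Benenti:flat} is polynomial in $x$ and the \name{Killing} equation propagates by analytic continuation. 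The same polynomial extension covers non-definite signatures of $g$, exactly as the author already remarked immediately after \eqref{eq:Benenti}.
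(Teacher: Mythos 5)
Your proposal is correct and takes essentially the same route as the paper: the same line-preserving map $f_A(x)=Ax/g(Ax,u)$, the same appeal to \linkref{Theorem}{thm:Matveev-Topalov}, the same differential and pullback metric, and the same value $\det(df_A)_x=\det A/g(Ax,u)^{n+1}$. The only (immaterial) difference is that you obtain this determinant from the matrix determinant lemma applied to the rank-one perturbation $B$ restricted to $u^\perp$, whereas the paper gets it by column operations on $\det\bigl(u,(df_A)_xe_1,\ldots,(df_A)_xe_n\bigr)$ in an orthonormal basis; your closing remark on extending from the open set $\{g(Ax,u)\neq0\}$ by polynomial continuation is a point the paper leaves implicit.
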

\begin{proof}
	The proof follows the lines of the proof in the non-flat case, considering the diffeomorphism
	\begin{equation}
		\label{eq:map:flat}
		\begin{array}{rrcl}
			f_A\colon&M&\to&M\\
			&x&\mapsto&f_A(x)\coloneq\frac{Ax}{g(u,Ax)}
		\end{array}
	\end{equation}
	instead of \eqref{eq:map}.  This map is line preserving, so we can again apply \linkref{theorem}{thm:Matveev-Topalov}.  The
	differential
	\[
		(df_A)_xv=\frac{g(u,Ax)Av-g(u,Av)Ax}{g(u,Ax)^2}
	\]
	of \eqref{eq:map:flat} yields the pullback metric
	\begin{multline*}
		(g_A)_x(v,w)\\
		=\frac1{g(u,Ax)^4}\Bigl[
			 g( u,Ax)g( u,Ax)g(Av,Aw)
			-g( u,Av)g( u,Ax)g(Ax,Aw)\\
			-g( u,Ax)g( u,Aw)g(Av,Ax)
			+g( u,Av)g( u,Aw)g(Ax,Ax)
		\Bigr]
		\fullstop
	\end{multline*}
	As above, we compute the determinant of $(g_A)_x$ using an othonormal basis $e_1,\ldots,e_n$ of $T_xM$,
	\begin{align*}
		\left(\frac{\det g_A}{\det g}\right)_x
		&={\det}^2(df)_x
		={\det}^2\bigl(u,(df)_xe_1,\ldots,(df)_xe_n\bigr)\\
		&={\det}^2\bigl(\tfrac{Ax}{g(Ax,u)},(df)_xe_1,\ldots,(df)_xe_n\bigr)\\
		&={\det}^2\left(\tfrac{Ax}{g(Ax,u)},\tfrac{Ae_1}{g(Ax,u)},\ldots,\tfrac{Ae_n}{g(Ax,u)}\right)\\
		&=\left(\frac{\det A}{g(Ax,u)^{n+1}}\,\det\bigl(x,e_1,\ldots,e_n\bigr)\right)^2
		=\left(\frac{\det A}{g(Ax,u)^{n+1}}\right)^2
		\comma
	\end{align*}
	and the corollary follows from \linkref{theorem}{thm:Matveev-Topalov}.
\end{proof}

\name{Killing} tensors of type \eqref{eq:Benenti} respectively \eqref{eq:Benenti:flat} coincide with those introduced in local
coordinates in \cite{Benenti92}.  Following \cite{Ibort&Magri&Marmo,Bolsinov&Matveev} we will call them \dfn{\name{Benenti}
tensors}.

\section{Some facts from representation theory}
\label{sec:representationtheory}

We briefly collect some facts we need from the representation theory of symmetric and general linear groups.  More details can
be found in standard textbooks.

\subsection{\name{Young} symmetrisers and the \name{Littlewood}-\name{Richardson} rule}

The isomorphism classes of irreducible representations (``irreps'') of $\SG_d$ are labelled by \dfn{partitions} of $d$, i.\,e.\
integers $d_1\ge d_2\ge\ldots\ge d_r>0$ with $d_1+\ldots+d_r=d$.  It is useful to depict partitions as so called
\dfn{\name{Young} frames}, as in the following example:

\begin{example}
	\label{ex:partitions}
	The partitions of $4$ are $4=4$, $3+1=4$, $2+2=4$, $2+1+1=4$ and $1+1+1+1=4$ with corresponding \name{Young} frames
	\[
		\yng(4)\qquad
		\yng(3,1)\qquad
		\yng(2,2)\qquad
		\yng(2,1,1)\qquad
		\yng(1,1,1,1)
		\fullstop
	\]
\end{example}

The dimension of an irrep corresponding to a \name{Young} frame is given by dividing $d!$ by the product of the hook lengths of
all boxes of the frame, where the hook length of a box is
\[
	(\text{the number of boxes to the right})\;+\;1\;+\;(\text{the number of boxes below})
	\fullstop
\]
This is the so called \emph{hook formula}.

\begin{example}
	The hook lengths of the boxes of the \name{Young} frames in \linkref{example}{ex:partitions} are
	\[
		\young(4321)\qquad
		\young(421,1)\qquad
		\young(32,21)\qquad
		\young(41,2,1)\qquad
		\young(4,3,2,1)
		\fullstop
	\]
	The corresponding dimensions are thus $1$, $3$, $2$, $3$, $1$.
\end{example}

The irreps of $\SG_d$ can be realised on subspaces of the \dfn{group algebra}
\[
	\R\SG_d=\left\{\sum_{\pi\in\SG_d}\lambda_\pi\pi\colon\lambda_\pi\in\R\right\}
	\fullstop
\]
This is the free real vector space over $\SG_d$ as a set, endowed with the obvious multiplication given by extending the group
multiplication in $\SG_d$ linearly.  Multiplication with elements of $\SG_d$ from the left defines a representation of $\SG_d$
on $\R\SG_d$.  One then constructs projectors onto irreducible subspaces as follows.

Let $\tau$ be a \name{Young} frame whose $d$ boxes are filled with the integers $1,\ldots,d$ without repetition in an arbitrary
order.  We call this a \dfn{\name{Young} tableau}.  For each row $r$ of $\tau$ let $\SG_r\subset\SG_d$ be the symmetric group of
permutations of the labels in $r$ and similarly $\SG_c\subset\SG_d$ for each column $c$ of $\tau$.  The \dfn{\name{Young}
symmetriser} corresponding to $\tau$ is then the element in $\R\SG_d$ defined by
\begin{equation}
	\label{eq:Young_symmetriser}
	\prod_{\text{$r$ row    of $\tau$}}\mspace{-16mu}\underbrace{\Bigl(\;\sum_{\pi\in\SG_r}          \pi\;\Bigr)}_{\text{row        symmetriser of $r$}}\quad
	\prod_{\text{$c$ column of $\tau$}}\mspace{-16mu}\underbrace{\Bigl(\;\sum_{\pi\in\SG_c}(\sign\pi)\pi\;\Bigr)}_{\text{column antisymmetriser of $c$}}
	\fullstop
\end{equation}
We will often identify a \name{Young} tableau with its corresponding \name{Young} symmetriser as in the following example.
\begin{example}
	\begin{align}
		\label{eq:1234}
		\young(12,34)
		&=\young(12)\young(34)\young(1,3)\young(2,4)
		=\bigl(e+(12)\bigr)\bigl(e+(34)\bigr)\bigl(e-(13)\bigr)\bigl(e-(24)\bigr)\\
		\notag&=e+(12)+(34)-(13)-(24)+(12)(34)+(13)(24)-(132)-(234)\\
		\notag&\qquad\qquad-(124)-(143)+(1423)+(1324)-(1234)-(1432)+(14)(23)
	\end{align}
\end{example}

If we denote by $\hook\tau$ the product of the hook lengths of all boxes in a \name{Young} tableau $\tau$, the corresponding
\name{Young} symmetriser satisfies
\begin{equation}
	\label{eq:square}
	\tau^2=\hook\tau\tau
	\fullstop
\end{equation}
Every element in $\R\SG_d$ is at the same time a linear operator on $\R\SG_d$ via multiplication from the right.  Then the image
of $\tau$ is an irreducible subrepresentation of $\R\SG_d$ whose isomorphism class is given by the \name{Young} frame of $\tau$.
Rewriting \eqref{eq:square} we get a projector onto the corresponding subspace:
\begin{equation}
	\label{eq:Young_projector}
	\left(\frac\tau{\hook\tau}\right)^2=\frac\tau{\hook\tau}
	\fullstop
\end{equation}

The group algebra $\R\SG_d$ carries a natural $\SG_d$-invariant inner product defined by taking the basis $\SG_d$ to be
orthonormal.  With respect to this inner product the adjoint of an element in $\R\SG_d$ is
\[
	\Bigl(\,\sum_{\pi\in\SG_d}\alpha_\pi\pi     \Bigr)^\adjoint
	=       \sum_{\pi\in\SG_d}\alpha_\pi\pi^{-1}
	\fullstop
\]
Note that the column symmetrisers of a \name{Young} tableau are self-adjoint and commute and likewise for its row
antisymmetrisers.  Taking the adjoint of a \name{Young} symmetriser therefore simply exchanges the two products in
\eqref{eq:Young_symmetriser}.
\begin{example}
	\begin{equation}
		\label{eq:1234*}
		{\young(12,34)}^\adjoint
		=\young(1,3)\young(2,4)\young(12)\young(34)
		=\bigl(e-(13)\bigr)\bigl(e-(24)\bigr)\bigl(e+(12)\bigr)\bigl(e+(34)\bigr)\\
	\end{equation}
\end{example}
We see that \name{Young} symmetrisers are in general not self-adjoint, so that the corresponding \name{Young} projectors
\eqref{eq:Young_projector} will not be orthogonal.  However, a \name{Young} projector and its adjoint project onto isomorphic
irreps \cite{Fulton}.  From \eqref{eq:Young_symmetriser} and \eqref{eq:square} follows that up to an apropriate factor the
element $\tau\tau^\adjoint$ is an orthogonal projector onto the image of $\tau$ and similarly $\tau^\adjoint\tau$ onto the
image of $\tau^\adjoint$.

\bigskip

Two representations $\lambda_1$ and $\lambda_2$ of $\SG_{d_1}$ on $V_1$ respectively $\SG_{d_2}$ on $V_2$ determine a
representation of $\SG_{d_1}\times\SG_{d_2}$ on $V_1\otimes V_2$ given by
\begin{align*}
	(g_1\times g_2)(v_1\otimes v_2)&\coloneq(g_1v_1)\otimes(g_2v_2)&
	g_1\in\SG_{d_1},&\;\;
	g_2\in\SG_{d_2}
	\fullstop
\end{align*}
Via the inclusion $\SG_{d_1}\times\SG_{d_2}\hookrightarrow\SG_{d_1+d_2}$ this induces a representation
$\lambda_1\boxtimes\lambda_2$ of $\SG_{d_1+d_2}$ on $V_1\otimes V_2$, called the exterior tensor product of $\lambda_1$ and
$\lambda_2$.  The \name{Littlewood}-\name{Richardson} rule tells us how this product decomposes into irreps:

\begin{theorem}[The \name{Littlewood}-\name{Richardson} rule]
	The decomposition of the exterior tensor product $\lambda_1\boxtimes\lambda_2$ of two irreps $\lambda_1$ of $S_{d_1}$ and
	$\lambda_2$ of $S_{d_2}$ into irreps of $S_{d_1+d_2}$ is given by the following algorithm.  First label all the boxes in
	$\lambda_2$ with their corresponding row number.  Then add the labelled boxes of $\lambda_2$ to $\lambda_1$ -- one by one
	from top to bottom -- respecting at each step the following rules:
\pagebreak
	\begin{enumerate}
		\item
			The obtained frame is a legitimate \name{Young} frame.
		\item
			No two boxes in the same column are labelled equally.
		\item
			If the labels are read off from right to left along the rows from top to bottom, one never encounters more 1's than
			2's, and so on.
	\end{enumerate}
	Each of the distinct \name{Young} frames constructed in this way specifies an irreducible sum term in the decomposition of
	$\lambda_1\boxtimes\lambda_2$ with the correpsonding multiplicity, since the same shaped \name{Young} frame may arise in more
	than one way.  Since the exterior tensor product is commutative, one can choose the simpler \name{Young} frame for $\lambda_2$.
\end{theorem}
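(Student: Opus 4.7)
The plan is to reduce the theorem to a product identity for Schur symmetric functions and then prove that identity by combinatorics of tableaux, so that the nontrivial content is pushed into the symmetric function world where powerful tools are available.

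First I would pass to the graded ring $R = \bigoplus_{d \ge 0} R(\SG_d)$ of virtual representations, equipped with the product $\boxtimes$ defined by induction from $\SG_{d_1}\times\SG_{d_2}$ to $\SG_{d_1+d_2}$; this is exactly the exterior tensor product appearing in the statement. The \emph{Frobenius characteristic map} $\mathrm{ch}\colon R\to\Lambda$ into the ring of symmetric functions is a graded ring isomorphism, and under it the irrep corresponding to a partition $\lambda$ is sent to the Schur function $s_\lambda$. The theorem is therefore equivalent to the Schur product identity
\[
s_{\lambda_1}\, s_{\lambda_2} = \sum_\mu c^\mu_{\lambda_1\lambda_2}\, s_\mu,
\]
where $c^\mu_{\lambda_1\lambda_2}$ is meant to be the number of fillings of $\mu/\lambda_1$ produced by the algorithm in the statement.

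Second, I would attack the Schur identity. The Jacobi--Trudi formula $s_\lambda = \det(h_{\lambda_i-i+j})$ reduces multiplication by $s_{\lambda_2}$ to iterated multiplication by complete homogeneous symmetric functions $h_k$. Each such multiplication is controlled by \emph{Pieri's rule}: $s_\lambda\cdot h_k$ is the sum of $s_\nu$ over all $\nu$ obtained from $\lambda$ by adjoining a horizontal strip of $k$ cells. Iterating Pieri --- first adding a horizontal strip of $(\lambda_2)_1$ cells labelled $1$, then of $(\lambda_2)_2$ cells labelled $2$, and so on --- generates exactly the skew semistandard tableaux of shape $\mu/\lambda_1$ with content $\lambda_2$, which amounts to rule (i) together with the column-distinctness condition (ii).

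The hard part is to match the multiplicities: one must show that, after cancellation, the coefficient of $s_\mu$ equals the count of only those tableaux whose reverse reading word is a lattice word, which is the substantive content of rule (iii). This is precisely the step in which the original 1934 argument of \name{Littlewood} and \name{Richardson} was incomplete; rigorous proofs were only supplied much later, for instance by \name{Sch\"utzenberger} via jeu de taquin, which produces an explicit bijection between arbitrary skew semistandard tableaux and pairs consisting of a Littlewood--Richardson tableau and a straight-shape tableau. An alternative route proceeds via the \name{Robinson}--\name{Schensted}--\name{Knuth} correspondence and Knuth equivalence in the plactic monoid. Either way, this combinatorial core carries essentially all the weight of the proof, and I would simply quote it from the standard references rather than reproduce it here.
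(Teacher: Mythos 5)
The paper states the \name{Littlewood}--\name{Richardson} rule without proof, as background material to be found ``in standard textbooks,'' so there is no in-paper argument to compare against; your proposal is a proof \emph{outline} along the standard textbook route. That route is correct: the \name{Frobenius} characteristic is indeed a graded ring isomorphism carrying the induction product $\boxtimes$ to multiplication of \name{Schur} functions (note the paper's phrasing ``induces a representation \ldots on $V_1\otimes V_2$'' is loose --- the induced module has dimension $\binom{d_1+d_2}{d_1}\dim V_1\dim V_2$, as the paper's own example \eqref{eq:2x2} confirms --- but your identification of $\boxtimes$ as the induction product is the right reading), and \name{Jacobi}--\name{Trudi} plus iterated \name{Pieri} does reduce the problem to showing that the signed sum over horizontal-strip sequences collapses onto the lattice-word tableaux. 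Two caveats. First, iterated \name{Pieri} by itself computes $s_{\lambda_1}h_{\lambda_2}$, whose expansion counts \emph{all} semistandard skew tableaux of content $\lambda_2$ (the \name{Kostka}-type count); conditions (i) and (ii) alone do not yet isolate the LR coefficient, so the claim that this step ``amounts to rules (i) and (ii)'' is only the easy half --- the signs from the \name{Jacobi}--\name{Trudi} determinant and their cancellation (via a sign-reversing involution, jeu de taquin, or the plactic monoid) carry essentially all of the content, as you acknowledge. Second, since you explicitly outsource exactly that step to \name{Schützenberger} or RSK, the proposal is not a self-contained proof but a correct reduction plus a citation; for a classical theorem that the paper itself only quotes, this is a reasonable standard to meet, but you should present it as such rather than as a proof.
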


\begin{example}
	\begin{align}
		\label{eq:2x2}
		\yng(2)\boxtimes\young(11)
		&\isom
		\young(~~11)\oplus\young(~~1,1)\oplus\young(~~,11)\\
		\label{eq:11x11}
		\young(~,~)\boxtimes\young(1,2)
		&\isom
		\young(~1,~2)\oplus\young(~1,~,2)\oplus\young(~,~,1,2)\\
		\label{eq:111x3}
		\yng(1,1,1)\boxtimes\young(111)
		&\isom
		\young(~111,~,~)\oplus\young(~11,~,~,1)
	\end{align}
\end{example}

\subsection{\name{Weyl}'s construction and algebraic curvature tensors}

Every \name{Young} tableau $\tau$ gives rise to a $\GLG(V)$-irrep in the following way, called \name{Weyl}'s construction.
Consider the $d$-fold tensor product $V^{\otimes d}$ as a representation space for both $\GLG(V)$ and $\SG_d$ with respect to
the commuting actions
\[
	\begin{array}{r@{\,}c@{\;}c@{\;}c@{\,}l@{\;}c@{\;}r@{\,}c@{\;}c@{\;}c@{\,}l@{\qquad}r@{\;}c@{\;}l}
		g(v_{i_1}&\otimes&\ldots&\otimes&v_{i_d})&\coloneq&(gv_{i_1})&\otimes&\ldots&\otimes&(gv_{i_d})&g&\in&\GLG(V)\\
		\pi(v_1&\otimes&\ldots&\otimes&v_d)&\coloneq&v_{\pi^{-1}(1)}&\otimes&\ldots&\otimes&v_{\pi^{-1}(d)}&\pi&\in&\SG_d
		\fullstop
	\end{array}
\]
Each element in $\R\SG_d$ gives a linear operator on $V^{\otimes d}$ by linearly extending the action of $\SG_d$.  The image of
a \name{Young} symmetriser $\tau\in\R\SG_d$ is then an irreducible $\GLG(V)$-subrepresentation.  Considering instead the dual
action of $\GLG(V)$ on the dual $\bar V$ of $V$ yields the (non-isomorphic) dual representation on $\bar V^{\otimes d}$.
\begin{example}
	The \name{Young} symmetriser \eqref{eq:1234} determines the following operator on $\bar V^{\otimes4}$ whose image
	constitutes a $\GLG(V)$-irrep:
	\begin{equation}
		\label{eq:symprojector}
		\begin{array}{r@{\;}c@{\;}c@{\;}c@{\;}c}
			{\young(\aone\atwo,\bone\btwo)}\phantom{^\adjoint}T_{a_1a_2b_1b_2}
			=&T_{a_1a_2b_1b_2}
			-&T_{a_2a_1b_1b_2}
			-&T_{a_1a_2b_2b_1}
			+&T_{a_2a_1b_2b_1}\\
			+&T_{b_1a_2a_1b_2}
			-&T_{a_2b_1a_1b_2}
			-&T_{b_1a_2b_2a_1}
			+&T_{a_2b_1b_2a_1}\\
			+&T_{a_1b_2b_1a_2}
			-&T_{b_2a_1b_1a_2}
			-&T_{a_1b_2a_2b_1}
			+&T_{b_2a_1a_2b_1}\\
			+&T_{b_1b_2a_1a_2}
			-&T_{b_2b_1a_1a_2}
			-&T_{b_1b_2a_2a_1}
			+&T_{b_2b_1a_2a_1}
			\fullstop
		\end{array}
	\end{equation}
	Note that on the level of tensor components one gets the correct action of $\SG_d$ by permuting index names, not index
	positions.
\end{example}
In the same way we can construct an irreducible $\GLG(V)$-subrepresentation from the adjoint $\tau^\adjoint$ of a \name{Young}
tableau $\tau$.
\begin{example}
	\begin{equation}
		\label{eq:antiprojector}
		\begin{array}{r@{\;}c@{\;}c@{\;}c@{\;}c}
			{\young(\aone\atwo,\bone\btwo)}^\adjoint T_{a_1b_1a_2b_2}
			=&T_{a_1b_1a_2b_2}
			+&T_{a_2b_1a_1b_2}
			+&T_{a_1b_2a_2b_1}
			+&T_{a_2b_2a_1b_1}\\
			-&T_{b_1a_1a_2b_2}
			-&T_{a_2a_1b_1b_2}
			-&T_{b_1b_2a_2a_1}
			-&T_{a_2b_2b_1a_1}\\
			-&T_{a_1b_1b_2a_2}
			-&T_{b_2b_1a_1a_2}
			-&T_{a_1a_2b_2b_1}
			-&T_{b_2a_2a_1b_1}\\
			+&T_{b_1a_1b_2a_2}
			+&T_{b_2a_1b_1a_2}
			+&T_{b_1a_2b_2a_1}
			+&T_{b_2a_2b_1a_1}
			\fullstop
		\end{array}
	\end{equation}
\end{example}
\begin{example}[Algebraic curvature tensors]
	An \dfn{algebraic curvature tensor} on $V$ is an element $R\in\bar V^{\otimes4}$ satisfying the symmetry relations of a
	\name{Riemann}ian curvature tensor, i.e.:
	\begin{subequations}
		\label{eq:R}
		\begin{align}
			\label{eq:R:anti}\text{antisymmetry:\quad}&R_{b_1a_1a_2b_2}=-R_{a_1b_1a_2b_2}=R_{a_1b_1b_2a_2}\\
			\label{eq:R:pair}\text{pair symmetry:\quad}&R_{a_2b_2a_1b_1}=R_{a_1b_1a_2b_2}\\
			\label{eq:R:Bianchi}\text{\name{Bianchi} identity:\quad}&R_{a_1b_1a_2b_2}+R_{a_1a_2b_2b_1}+R_{a_1b_2b_1a_2}=0
		\end{align}
	\end{subequations}
	A little computation shows, that on one hand any tensor of the form \eqref{eq:antiprojector} has these symmetries and that
	on the other hand any tensor having these symmetries verifies
	\[
		\frac1{12}{\young(\aone\atwo,\bone\btwo)}^\adjoint R_{a_1b_1a_2b_2}=R_{a_1b_1a_2b_2}
		\fullstop
	\]
	This means that algebraic curvature tensors form an irreducible $\GLG(V)$-re\-pre\-sen\-ta\-tion space.
\end{example}
\begin{example}[Symmetrised algebraic curvature tensors]
	A \dfn{symmetrised algebraic curvature tensor} on $V$ is an element $S\in\bar V^{\otimes4}$ satisfying the following
	symmetry relations:
	\begin{subequations}
		\label{eq:S}
		\begin{align}
			\label{eq:S:sym}\text{symmetry:\quad}&S_{a_2a_1b_1b_2}=+S_{a_1a_2b_1b_2}=S_{a_1a_2b_2b_1}\\
			\label{eq:S:pair}\text{pair symmetry:\quad}&S_{b_1b_2a_1a_2}=S_{a_1a_2b_1b_2}\\
			\label{eq:S:Bianchi}\text{\name{Bianchi} identity:\quad}&S_{a_1a_2b_1b_2}+S_{a_1b_2a_2b_1}+S_{a_1b_1b_2a_2}=0
		\end{align}
	\end{subequations}
	As in the previous example, this is equivalent to
	\[
		\frac1{12}{\young(\aone\atwo,\bone\btwo)}S_{a_1a_2b_1b_2}=S_{a_1a_2b_1b_2}
	\]
	so that symmetrised algebraic curvature tensors form another irreducible $\GLG(V)$-representation space.
\end{example}
\begin{remark}[\name{Bianchi} identity]
	In presence of the first two symmetries, there are several equivalent forms of the \name{Bianchi} identity in both cases.
	First, we can write it as vanishing cyclic sum over \emph{any} three of the four indices, for example as
	$R_{a_1b_1a_2b_2}+R_{b_1a_2a_1b_2}+R_{a_2a_1b_1b_2}=0$.  Second, for (symmetrised) algebraic curvature tensors the
	\name{Bianchi} identity is equivalent to the vanishing of the complete antisymmetrisation (symmetrisation) in any three of
	the four indices, for example to
	\begin{align}
		\notag
		\smash[b]{\young(\bone,\atwo,\btwo)}R_{a_1b_1a_2b_2}&=0\\
		\intertext{or}
		\label{eq:Bianchi:alt}
		\young(\atwo\bone\btwo)S_{a_1a_2b_1b_2}&=0
		\fullstop
	\end{align}
	In the following we will refer to all these forms as ``\name{Bianchi} identity''.
\end{remark}
The $\GLG(V)$-irreps constructed from $\tau$ and $\tau^\adjoint$ are isomorphic.
\begin{example}
	An explicit isomorphism between the irreps of $\GLG(V)$ on algebraic curvature tensors respectively on symmetrised algebraic
	curvature tensors is given by
	\begin{subequations}
		\label{eq:R<->S}
		\begin{align}
			\label{eq:R->S}
			S_{a_1a_2b_1b_2}&=\frac1{\sqrt3}\bigl(R_{a_1b_1a_2b_2}+R_{a_1b_2a_2b_1}\bigr)\\
			\label{eq:S->R}
			R_{a_1b_1a_2b_2}&=\frac1{\sqrt3}\bigl(S_{a_1a_2b_1b_2}-S_{a_1b_2b_1a_2}\bigr)
			\comma
		\end{align}
	\end{subequations}
	which is easily checked using the symmetries \eqref{eq:R} and \eqref{eq:S}.
\end{example}

Two \name{Young} tableaux determine isomorphic $\GLG(V)$-representations if and only if they fill the same \name{Young} frame
$\lambda$.  Their dimension can be computed by labelling each box of $\lambda$ with
\[
	(\text{number of the box' column})\;+\;N\;-\,(\text{number of the box' row})
	\comma
\]
taking the product of these labels and dividing by the product $\hook\lambda$ of the hook lengths of $\lambda$.  It is standard
to denote the isomorphism class obtained from $\lambda$ via the \name{Weyl} construcction by $\{\lambda\}$.

\begin{example}
	The isomorphism class of the irrepresentations given by (symmetrised) algebraic curvature tensors is
	$\{\textYF\yng(2,2)\}$ an has dimension
	\begin{equation}
	    \label{eq:dim(22)}
		\dim\;\Bigl\{\YF\yng(2,2)\Bigr\}
		=\frac{(N-1)N^2(N+1)}{12}
		\fullstop
	\end{equation}
\end{example}

The dual pairing between $\bar V^{\otimes d}$ and $V^{\otimes d}$ is given by index contraction.  From the identity
\[
	S_{i_1\cdots i_d}T^{i_{\pi(1)}\cdots i_{\pi(d)}}
	=S_{i_{\pi^{-1}(1)}\cdots i_{\pi^{-1}(d)}}T^{i_1\cdots i_d}
\]
we deduce
\[
	S_{i_1\cdots i_d}\bigl(\pi T^{i_1\cdots i_d}\bigr)
	=\bigl(\pi^{-1}     S_{i_1\cdots i_d}\bigr)T^{i_1\cdots i_d}
	=\bigl(\pi^\adjoint S_{i_1\cdots i_d}\bigr)T^{i_1\cdots i_d}
	\fullstop
\]
This means that with respect to the dual pairing the adjoint of the linear operator on $V^{\otimes d}$ given by an element
$\tau\in\R\SG_d$ acting on upper indices is given by $\tau^\adjoint$ acting on lower indices.  To save notation we will use
parentheses as above to indicate whether a given element of $\R\SG_d$ acts on upper or lower indices.

\section{An algebraic characterisation of \name{Killing} tensors}
\label{sec:keyresult}

Recall that we consider standard models of constant sectional curvature manifolds $M$, embedded isometrically as hypersurfaces
in a \name{Euclid}ean vector space $(V,g)$.  As common in relativity, we distinguish coordinates on $M$ and $V$ by index types:

\begin{convention}
	Throughout this exposition we use latin indices $a,b,c,\ldots$ for components in $V$ (ranging from $0$ to $n$) and greek
	indices $\alpha,\beta,\gamma,\ldots$ for local coordinates on $M$ (ranging from $1$ to $n$).  We can then denote both the
	inner product on $V$ as well as the induced metric on $M$ by the same letter $g$ and distinguish both via the type of
	indices.  Consequently, latin indices are rised and lowered using $g_{ab}$ and greek ones using $g_{\alpha\beta}$.
\end{convention}

The key result for our algebraic characterisation of integrability stems from \name{Mc\-Le\-na\-ghan}, \name{Mil\-son} \&
\name{Smir\-nov} and is a special case of theorem~3.5 in \cite{McLenaghan&Milson&Smirnov}:

\begin{theorem}
	\label{thm:keyresult}
	Let $M\subset V$ be one of the standard models for constant sectional curvature manifolds as in \linkref{example}{ex:models}.
	\begin{enumerate}
		\item
			There is an isomorphism between the irreducible $\GLG(V)$-representation space of antisymmetric tensors $A_{ab}$ on
			$V$ and the vector space of \name{Killing} vectors $K$ on $M$, given by
			\begin{align*}
				K_x(v)&\coloneq A_{ab}x^av^b&
				x\in M,&\;\;
				v\in T_xM
				\comma
			\end{align*}
			when $K$ is written covariantly.
		\item
			There is an isomorphism between the irreducible $\GLG(V)$-representation space of algebraic curvature tensors
			$R_{a_1b_1a_2b_2}$ on $V$ and the vector space of \name{Killing} tensors $K$ on $M$, given by
			\begin{align}
				\label{eq:keyiso:anti}
				K_x(v,w)&\coloneq R_{a_1b_1a_2b_2}x^{a_1}v^{b_1}x^{a_2}w^{b_2}&
				x\in M,&\;\;
				v,w\in T_xM
				\comma
			\end{align}
			when $K$ is written covariantly.
	\end{enumerate}
	Both isomorphisms are equivariant with respect to the action of the isometry group of $M$ as a subgroup of $\GLG(V)$.
\end{theorem}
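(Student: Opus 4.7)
The plan is to prove statement (ii) directly; statement (i) follows by an analogous but strictly simpler argument. The strategy splits into four steps: (a) verify that the formula \eqref{eq:keyiso:anti} really defines a \name{Killing} tensor, (b) show injectivity of $R\mapsto K$, (c) deduce surjectivity from a dimension count, and (d) observe the equivariance directly from the defining contraction.

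For (a) I would use the \name{Gauss} formula relating the ambient flat connection $\bar\nabla$ on $V$ to the \name{Levi}-\name{Civita} connection $\nabla$ on $M$: in the (pseudo-)spherical case $\bar\nabla_XY=\nabla_XY-g(X,Y)\,x$ for tangent fields $X,Y$, while in the flat case $\nabla$ coincides with $\bar\nabla$.  Writing $K(Y,Z)=R(x,Y,x,Z)$ as shorthand for \eqref{eq:keyiso:anti} and extending $Y,Z$ arbitrarily to $V$, the \name{Leibniz} rule together with the ambient identity $\bar\nabla_Xx=X$ and the vanishing of every contraction of the form $R(x,x,\cdot,\cdot)$ or $R(\cdot,\cdot,x,x)$ forced by \eqref{eq:R:anti} would reduce $(\nabla_XK)(Y,Z)$ to
\[
    (\nabla_XK)(Y,Z)=R(X,Y,x,Z)+R(x,Y,X,Z)=R(X,Y,x,Z)+R(X,Z,x,Y)\comma
\]
after one application of the pair symmetry \eqref{eq:R:pair}.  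Symmetrising over $X,Y,Z$ produces six terms that group into three pairs, each of which cancels by the antisymmetry \eqref{eq:R:anti} in the first two slots of $R$, thus proving the \name{Killing} equation \eqref{eq:Killing}.  Note that the \name{Bianchi} identity \eqref{eq:R:Bianchi} plays no role at this stage.

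For (b), if $R(x,v,x,w)=0$ for every $x\in M$ and every $v,w\in T_xM$, then the antisymmetry of $R$ in its first two arguments extends the identity from $v\perp x$ to arbitrary $v\in V$ (and similarly for $w$); by homogeneity in $x$ and continuity it would then hold on all of $V$, after which successive polarisation combined with the symmetries \eqref{eq:R} forces $R=0$.  The hyperplane case is handled identically, with $u$ replacing the radial normal.  Injectivity together with the equality
\[
    \dim\Bigl\{\YF\yng(2,2)\Bigr\}=\frac{(N-1)N^2(N+1)}{12}=\frac{n(n+1)^2(n+2)}{12}
\]
from \eqref{eq:dim(22)} and \eqref{eq:maxdim} yields surjectivity, i.\,e.\ step (c).  Step (d) is transparent: any $g\in\OG(V,g)\subset\GLG(V)$ acts simultaneously on $R$, on the base point $x\in M$ and on the tangent vectors $v,w$, and the right hand side of \eqref{eq:keyiso:anti} is tautologically invariant under this simultaneous action.

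The delicate step is (a); the potential obstacle is bookkeeping — keeping track of which of the many contractions of $R$ with $x$, $X$, $Y$, $Z$ and $\bar\nabla_XY$ survive and in which combinations they recombine.  Once one recognises that any term with two $x$'s inserted into an antisymmetric pair of slots of $R$ vanishes, all remaining cancellations are dictated by \eqref{eq:R:anti} and \eqref{eq:R:pair} alone.  The flat case requires no additional work beyond dropping the extrinsic correction to $\nabla$, so the argument applies uniformly across the standard models of \linkref{example}{ex:models}.
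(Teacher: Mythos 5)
Your proposal is correct and follows essentially the same route as the paper: the paper proves the equivalent Corollary~\ref{cor:keyiso:sym} by computing $\nabla_uK(v,w)$ via the second fundamental form $\operatorname{II}_x(u,v)=-g(u,v)x$ (packaged there as a lemma on the Hesse form of the coordinate functions), establishes injectivity by exactly your decomposition $V=T_xM\oplus\R x$, density of $\R M$ and polarisation, and concludes bijectivity from the coincidence of \eqref{eq:dim(22)} with \eqref{eq:maxdim}. The only substantive variation is that you work with the antisymmetric form $R$ rather than the symmetrised form $S$, and your observation that the extrinsic terms then die by the antisymmetry \eqref{eq:R:anti} alone — so that the \name{Bianchi} identity is not needed for the \name{Killing} equation, only later for injectivity — is accurate; in the paper's $S$-formulation the corresponding Hesse terms contain $S_{a_1a_2b_1b_2}x^{a_1}x^{a_2}x^{b_1}$ and genuinely require \eqref{eq:Bianchi:alt}.
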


First note that due to the term $x^{a_1}x^{a_2}$ the tensor $R_{a_1b_1a_2b_2}$ in \eqref{eq:keyiso:anti} is implicitely
symmetrised in the indices $a_1,a_2$ and can therefore be replaced by the corresponding symmetrised algebraic curvature tensor
\eqref{eq:R->S}.  Since this will simplify subsequent computations considerably, we reformulate the the second part of the
theorem:

\begin{corollary}
	\label{cor:keyiso:sym}
	Let $M\subset V$ be one of the standard models as in \linkref{example}{ex:models}.  Then
	\begin{align}
		\label{eq:keyiso:sym}
		K(v,w)&\coloneq S_{a_1a_2b_1b_2}x^{a_1}x^{a_2}v^{b_1}w^{b_2}&
		x\in M,&\;\;
		v,w\in T_xM
	\end{align}
	defines an isomorphism between the irreducible $\GLG(V)$-representation space of symmetrised algebraic curvature tensors
	$S_{a_1a_2b_1b_2}$ and the vector space of \name{Killing} tensors $K$ on $M$, which is equivariant with respect to the
	action of the isometry group.
\end{corollary}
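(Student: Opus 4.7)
The plan is to deduce \linkref{Corollary}{cor:keyiso:sym} directly from \linkref{Theorem}{thm:keyresult}(ii) by exploiting the explicit $\GLG(V)$-equivariant isomorphism \eqref{eq:R<->S} between algebraic and symmetrised algebraic curvature tensors. Since \linkref{Theorem}{thm:keyresult}(ii) already provides the isomorphism on the $R$-side, the task reduces to showing that the formula \eqref{eq:keyiso:sym} is, up to a constant factor, the composition of \eqref{eq:S->R} with \eqref{eq:keyiso:anti}.

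First I would observe that in \eqref{eq:keyiso:anti} the indices $a_1,a_2$ are contracted against the symmetric product $x^{a_1}x^{a_2}$, so only the part of $R$ symmetric under $a_1\leftrightarrow a_2$ contributes. Combining this symmetrisation with the pair symmetry \eqref{eq:R:pair} yields
\[
	\tfrac{1}{2}\bigl(R_{a_1b_1a_2b_2}+R_{a_2b_1a_1b_2}\bigr)
	=\tfrac{1}{2}\bigl(R_{a_1b_1a_2b_2}+R_{a_1b_2a_2b_1}\bigr)
	=\tfrac{\sqrt3}{2}\,S_{a_1a_2b_1b_2}
\]
by the defining formula \eqref{eq:R->S}. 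Consequently
\[
	R_{a_1b_1a_2b_2}\,x^{a_1}v^{b_1}x^{a_2}w^{b_2}
	=\tfrac{\sqrt3}{2}\,S_{a_1a_2b_1b_2}\,x^{a_1}x^{a_2}v^{b_1}w^{b_2},
\]
which identifies \eqref{eq:keyiso:sym} with the \name{Killing} tensor produced from the isomorphism \eqref{eq:S->R} followed by \eqref{eq:keyiso:anti}, up to the non-zero scalar $\tfrac{2}{\sqrt3}$.

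Second, both \eqref{eq:S->R} (the inverse of \eqref{eq:R->S}) and the map of \linkref{Theorem}{thm:keyresult}(ii) are $\GLG(V)$-equivariant linear isomorphisms onto the space of \name{Killing} tensors on $M$, so their composition is also a $\GLG(V)$-equivariant linear isomorphism, and rescaling by a non-zero constant preserves this property. Equivariance under the isometry group of $M$ then comes for free, since that group is embedded in $\GLG(V)$ as in \linkref{example}{ex:models}.

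There is essentially no obstacle here. The single step requiring care is the symmetrisation argument identifying the $(a_1,a_2)$-symmetric part of $R$ with a scalar multiple of $S$ via \eqref{eq:R:pair} and \eqref{eq:R->S}; as a sanity check one may verify independently, from the index symmetry \eqref{eq:S:sym} alone, that the right hand side of \eqref{eq:keyiso:sym} is symmetric in $(v,w)$ and hence genuinely defines a symmetric bilinear form on $T_xM$.
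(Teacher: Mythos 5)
Your argument is correct, but it takes a genuinely different route from the one written out in the paper. You deduce the corollary formally from \linkref{theorem}{thm:keyresult}(ii) by composing with the isomorphism \eqref{eq:S->R}: the identity $R_{a_1b_1a_2b_2}x^{a_1}v^{b_1}x^{a_2}w^{b_2}=\tfrac{\sqrt3}{2}\,S_{a_1a_2b_1b_2}x^{a_1}x^{a_2}v^{b_1}w^{b_2}$ is exactly right (the pair symmetry \eqref{eq:R:pair} converts the $a_1\leftrightarrow a_2$ symmetrisation forced by $x^{a_1}x^{a_2}$ into the combination appearing in \eqref{eq:R->S}), and an overall non-zero scalar does not affect being an equivariant isomorphism. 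This is in fact precisely the justification the paper itself sketches in the paragraph \emph{preceding} the corollary. The proof the paper then actually writes out is deliberately self-contained and direct: it verifies the \name{Killing} equation \eqref{eq:Killing} by computing $\nabla_uK(v,w)$ via the \name{Hesse} operator and the second fundamental form of $M\subset V$ together with the \name{Bianchi} identity, proves injectivity by splitting $V=T_xM\oplus\R x$, density of $\R M$ in $V$ and polarisation, and obtains bijectivity from the coincidence of the dimensions \eqref{eq:dim(22)} and \eqref{eq:maxdim}. What your shortcut buys is brevity, at the price of resting entirely on the quoted Theorem~3.5 of \cite{McLenaghan&Milson&Smirnov}; what the paper's longer proof buys is independence from that reference and, more importantly, the intermediate results \eqref{eq:nabla} and \eqref{eq:KK'}, which are indispensable for deriving the algebraic integrability conditions in \linkref{section}{sec:K}. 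So your proposal is a valid proof of the corollary as stated, but it would not supply the material the paper needs downstream.
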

We include a short proof here, because some ideas and intermediate results will be useful in subsequent computations.
\begin{remark}
	If we consider the standard coordinates $x^a$ of a vector $x\in V$ as functions on $M\subset V$ by restriction, then we can
	write for any tangent vector $u\in T_xM\subset V$ with coordinates $u^a$ in $V$:
	\begin{equation}
		\label{eq:nabla}
		\nabla_ux^a=u^a
		\comma
	\end{equation}
	where $\nabla$ denotes the \name{Levi}-\name{Civita} connection of the metric on $M$.
\end{remark}
\begin{proof}[Proof (of \linkref{corollary}{cor:keyiso:sym})]
	We first show that the tensor \eqref{eq:keyiso:sym} actually is a \name{Killing} tensor.  Extend the vectors $u,v,w\in T_xM$
	to arbitrary vector fields $\bar u,\bar v,\bar w$ on $M$.  Using \eqref{eq:keyiso:sym} and \eqref{eq:nabla}, we compute
	\begin{equation}
		\label{eq:gradient}
		\begin{split}
			\nabla_uK(v,w)
			&
			=\nabla_{\bar u}\bigl(K(\bar v,\bar w)\bigr)
			-K\bigl(\nabla_{\bar u}\bar v,\bar w\bigr)
			-K\bigl(\bar v,\nabla_{\bar u}\bar w\bigr)
			\\&
			=S_{a_1a_2b_1b_2}
				\Bigl(
					u^{a_1}x^{a_2}v^{b_1}w^{b_2}
					+x^{a_1}u^{a_2}v^{b_1}w^{b_2}
				\\&\mspace{128mu}
					+x^{a_1}x^{a_2}(\nabla_{\bar u}\nabla_{\bar v}-\nabla_{\nabla_{\bar u}\bar v})x^{b_1}w^{b_2}
				\\&\mspace{128mu}
					+x^{a_1}x^{a_2}v^{b_1}(\nabla_{\bar u}\nabla_{\bar w}-\nabla_{\nabla_{\bar u}\bar w})x^{b_2}
				\Bigr)
			\fullstop
		\end{split}
	\end{equation}
	The operator $H(u,v)=\nabla_{\bar u}\nabla_{\bar v}-\nabla_{\nabla_{\bar u}\bar v}$ is the \name{Hesse} operator and does
	not depend on the extensions $\bar u$ and $\bar v$ of $u$ and $v$.
	\begin{lemma}
		The \name{Hesse} form of the function $x^b$ is given by
		\[
			H(u,v)x^b=-g(v,w)x^b
		\]
		if $M$ is not flat and zero otherwise.
	\end{lemma}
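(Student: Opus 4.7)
The plan is to compute the Hessian of the coordinate function $x^b$ by comparing the Levi-Civita connection $\nabla$ of the induced metric on $M$ with the flat connection $D$ on the ambient space $V$. Since $M$ sits as a hypersurface in $V$, the difference is encoded by the second fundamental form, and the two cases of the lemma will correspond to the two choices in Examples~\ref{ex:models}.

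First I would extend $u,v\in T_xM$ to vector fields $\bar u,\bar v$ on $M$ and write
\[
	H(u,v)x^b
	=\nabla_{\bar u}\nabla_{\bar v}x^b-\nabla_{\nabla_{\bar u}\bar v}x^b
	=\nabla_{\bar u}(\bar v^b)-(\nabla_{\bar u}\bar v)^b
\]
using \eqref{eq:nabla} twice. Since $\bar v^b$ is an ordinary function on $M$, its derivative $\nabla_{\bar u}(\bar v^b)$ agrees with the component $(D_{\bar u}\bar v)^b$ of the flat $V$-derivative. Hence $H(u,v)x^b$ is precisely the $b$-th component of $D_{\bar u}\bar v-\nabla_{\bar u}\bar v$, which is the second fundamental form of $M\subset V$ evaluated at $u,v$ and projected onto the normal bundle.

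For the non-flat model $M=\{x\in V:g(x,x)=1\}$ the position vector $x$ itself is normal to $M$ and has unit length. Differentiating the identity $g(\bar v,x)=0$ in the direction $\bar u$ and using $D_{\bar u}x=\bar u$ gives
\[
	g(D_{\bar u}\bar v,x)=-g(\bar u,\bar v)
	\comma
\]
so the normal component of $D_{\bar u}\bar v$ is $-g(\bar u,\bar v)x$, whence $H(u,v)x^b=-g(u,v)x^b$ at the point $x$. For the flat model $M=\{x\in V:g(x,u)=1\}$ the normal is the fixed vector $u$, and the analogous computation $g(D_{\bar u}\bar v,u)=-\nabla_{\bar u}g(\bar v,u)=0$ shows that $D_{\bar u}\bar v$ is already tangent, so the second fundamental form vanishes and $H(u,v)x^b=0$.

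The computation is essentially bookkeeping; the only point that needs a little care is ensuring that the relation $\nabla_u x^a=u^a$ really allows the second derivative to be replaced by a flat $V$-derivative of a scalar function, which is what reduces the whole Hessian to the (very classical) second fundamental form of the embedding $M\hookrightarrow V$.
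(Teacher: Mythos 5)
Your proof is correct and takes essentially the same route as the paper: both use \eqref{eq:nabla} to identify $H(u,v)x^b$ with the $b$-th component of the difference between the flat ambient derivative and $\nabla$, i.e.\ with the second fundamental form of $M\subset V$, and then evaluate it in the two standard models. The only difference is that you carry out the computation of $\operatorname{II}$ explicitly (normal $=x$ for the pseudo-sphere, normal $=u$ for the hyperplane), whereas the paper states this step without proof.
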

	\begin{proof}
		Extend the vector fields $\bar u,\bar v$ on $M$ further to all of $V$ and denote the \name{Levi}-\name{Civita}
		connection in $V$ by $\bar\nabla$.  Then, using \eqref{eq:nabla},
		\begin{align*}
			H(u,v)x^a
			&
			=\nabla_{\bar u}\nabla_{\bar v}x^a-\nabla_{\nabla_{\bar u}\bar v}x^a
			=\nabla_{\bar u}(\bar v^a)-\bigl(\nabla_{\bar u}\bar v\bigr)^a
			=\bar\nabla_{\bar u}(\bar v^a)-\bigl(\nabla_{\bar u}\bar v\bigr)^a
			\\&
			=\bigl[\bar\nabla_{\bar u}\bar v-\nabla_{\bar u}\bar v\bigr]^a
			=\bigl[\operatorname{II}(u,v)\bigr]^a
		\end{align*}
		It is not difficult to show that the second fundamental form of $M\subset V$ is $\operatorname{II}_x(u,v)=-g(u,v)x$ if
		$M$ is not flat.  Otherwise the lemma is trivial.
	\end{proof}
	We resume the proof of \linkref{corollary}{cor:keyiso:sym}.  Together with the \name{Bianchi} identity the lemma shows that
	the terms in \eqref{eq:gradient} containing the \name{Hesse} form $H(u,v)x^b$ vanish.  Using the symmetry of
	$S_{a_1a_2b_1b_2}$ in $a_1$,$a_2$ we get
	\[
		\nabla_uK(v,w)=2S_{a_1a_2b_1b_2}x^{a_1}u^{a_2}v^{b_1}w^{b_2}
		\fullstop
	\]
	We reformulate the results obtained so far in local coordinates, using \eqref{eq:nabla} again:
	\begin{subequations}
		\label{eq:KK'}
		\begin{align}
			\label{eq:K}
			K_{\alpha\beta}&=S_{a_1a_2b_1b_2}x^{a_1}x^{a_2}\nabla_\alpha x^{b_1}\nabla_\beta x^{b_2}\\
			\label{eq:K'}
			\nabla_\gamma K_{\alpha\beta}&=2S_{c_1c_2d_1d_2}x^{c_1}\nabla_\gamma x^{c_2}\nabla_\alpha x^{d_1}\nabla_\beta x^{d_2}
		\end{align}
	\end{subequations}
	That $K$ satisfies the \name{Killing} equation \eqref{eq:Killing} is now a direct consequence of the \name{Bianchi}
	identity.

	We continue the proof by showing that the map defined by \eqref{eq:keyiso:sym} is injective.  Suppose
	\begin{equation}
		\label{eq:injectivity}
		S_{a_1a_2b_1b_2}x^{a_1}x^{a_2}v^{b_1}w^{b_2}=0
	\end{equation}
	for all $x,v,w\in V$ with $x\in M$ and $v,w\in T_xM$.  From the \name{Bianchi} identity we see that \eqref{eq:injectivity}
	is trivially satisfied if $v=x$ or $w=x$.  We can thus drop the condition $v,w\in T_xM$ by decomposing $v,w\in V$ according
	to the splitting $V=T_xM\oplus\R x$.  We can also drop the condition $x\in M$, since $\R M$ is dense in $V$.  Finally, by
	polarisation we obtain $S_{a_1a_2b_1b_2}x^{a_1}y^{a_2}v^{b_1}w^{b_2}=0$ for all $x,y,v,w\in V$ which is equivalent to $S=0$.

	Bijectivity now follows from the fact that the dimensions \eqref{eq:dim(22)} and \eqref{eq:maxdim} of both spaces coincide
	for $N=n+1$.  Equivariance is evident.
\end{proof}
\begin{definition}
	The \dfn{\name{Kulkarni}-\name{Nomizu} product} $h\varowedge k$ of two symmetric tensors $h$ and $k$ is the algebraic
	curvature tensor
	\begin{align*}
		(h\varowedge k)_{a_1b_1a_2b_2}
		\coloneq\;&h_{a_1a_2}k_{b_1b_2}-h_{a_1b_2}k_{b_1a_2}-h_{b_1a_2}k_{a_1b_2}+h_{b_1b_2}k_{a_1a_2}\\
		=\;&\frac14{\young(\aone\atwo,\bone\btwo)}^\adjoint h_{a_1a_1}k_{b_1b_2}
		\fullstop
	\end{align*}
	In the language of representation theory this product corresponds to the projection of \eqref{eq:2x2} to the
	$\textYF\yng(2,2)$-component.  
\end{definition}
\begin{example}[The metric]
	\label{ex:metric}
	If $M$ is not flat, the metric as a \name{Killing} tensor on $M$ is represented by the algebraic curvature tensor
	$\tfrac12g\varowedge g$.  This follows from \eqref{eq:keyiso:anti} and 
	\begin{equation}
		\label{eq:metric}
		\bigl(\tfrac12g\varowedge g\bigr)_{a_1b_1a_2b_2}
		=\frac18{\young(\aone\atwo,\bone\btwo)}^\adjoint g_{a_1a_2}g_{b_1b_2}
		=g_{a_1a_2}g_{b_1b_2}-g_{a_1b_2}g_{a_2b_1}
		\comma
	\end{equation}
	since $g_{a_1a_2}x^{a_1}x^{a_2}=1$ and $g_{a_1b_2}x^{a_1}v^{b_2}=0$.

	In the flat case the metric is represented by $(u\otimes u)\varowedge g$, given by
	\begin{equation}
		\label{eq:metric:flat}
		\begin{split}
			\bigl((u\otimes u)\varowedge g\bigr)_{a_1b_1a_2b_2}
			&=\frac14{\young(\aone\atwo,\bone\btwo)}^\adjoint u_{a_1}u_{a_2}g_{b_1b_2}\\
			&=u_{a_1}u_{a_2}g_{b_1b_2}
			-u_{a_1}u_{b_2}g_{b_1a_2}
			-u_{b_1}u_{a_2}g_{a_1b_2}
			-u_{b_1}u_{b_2}g_{a_1a_2}
			\comma
		\end{split}
	\end{equation}
	since $u_ax^a=1$ and $u_bv^b=0$.
\end{example}
The isometry group of $M$ is a subgroup of $\GLG(V)$.  As a consequence of \linkref{theorem}{thm:keyresult} its action on the
space of \name{Killing} tensors extends to a natural action of $\GLG(V)$.
\begin{example}[\name{Benenti} tensors]
	Rewriting \eqref{eq:Benenti} respectively \eqref{eq:Benenti:flat} in the form \eqref{eq:keyiso:anti} shows that
	\name{Benenti} tensors are represented by the algebraic curvature tensors
	\[
		\tfrac12(Ag)\varowedge(Ag)
	\]
	respectively
	\[
		(Au\otimes Au)\varowedge Ag
	\]
	if $M$ is flat.  Here $Ag$ denotes the image of $g$ under the action of $A\in\GLG(V)$ on symmetric tensors on $V$.  We can
	interpret this by saying that \name{Benenti} tensors form the orbit of the metric under the natural action of $\GLG(V)$ on
	\name{Killing} tensors.
\end{example}

\section{The algebraic integrability conditions}
\label{sec:K}

We saw that \name{Killing} tensors on a constant sectional curvature manifold correspond to algebraic curvature tensors.  The
aim of this section is to translate the \name{Nijenhuis} integrability conditions for such \name{Killing} tensors into algebraic
integrability conditions on the corresponding algebraic curvature tensors.

First note that in the integrability conditions \eqref{eq:TNS} the \name{Nijenhuis} torsion \eqref{eq:Nijenhuis} appears only
antisymmetrised in its two lower indices $\beta$,$\gamma$.  To simplify computations we will thus replace the \name{Nijenhuis}
torsion $N\indices{^\alpha_{\beta\gamma}}$ in the integrability conditions by the tensor
\begin{align*}
	\bar N\indices{^\alpha_{\beta\gamma}}&\coloneq
	\tfrac12\bigl(
		K\indices{^\alpha_\delta}\nabla_\gamma K\indices{^\delta_\beta }+
		K\indices{^\delta_\beta }\nabla_\delta K\indices{^\alpha_\gamma}
	\bigr)&
	\bar N\indices{^\alpha_{[\beta\gamma]}}
	&=   N\indices{^\alpha_{ \beta\gamma }}
	\fullstop
\end{align*}
Together with \eqref{eq:KK'} this can be written as
\begin{align*}
	\bar N\indices{^\alpha_{\beta\gamma}}
	=&\;
	S_{a_1a_2b_1b_2}S_{c_1c_2d_1d_2}
	x^{a_1}x^{a_2}x^{c_1}
	\nabla^\alpha x^{b_1}\nabla_\delta x^{b_2}
	\nabla_\gamma x^{c_2}\nabla^\delta x^{d_1}\nabla_\beta x^{d_2}
	\\+&\;
	S_{a_1a_2b_1b_2}S_{c_1c_2d_1d_2}
	x^{a_1}x^{a_2}x^{c_1}
	\nabla^\delta x^{b_1}\nabla_\beta x^{b_2}
	\nabla_\delta x^{c_2}\nabla^\alpha x^{d_1}\nabla_\gamma x^{d_2}
	\fullstop
\end{align*}
\begin{lemma}
	\label{lemma:contraction}
	Let $M$ be one of the standard models for constant sectional curvature manifolds as in \linkref{example}{ex:models}.  Then
	\[
		\nabla_\delta x^a\nabla^\delta x^b=
		\begin{cases}
			g^{ab}-u^au^b & \text{if $M$ is flat}\\
			g^{ab}-x^ax^b & \text{otherwise}
			\fullstop
		\end{cases}
	\]
\end{lemma}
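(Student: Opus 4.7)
My plan is to recognise the contraction $\nabla_\delta x^a\nabla^\delta x^b$ as the orthogonal projector from $V$ onto $T_xM$, expressed as a symmetric $(2,0)$-tensor on $V$. The result then follows by identifying the complementary normal direction in each of the two cases.

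First, I would observe that by the preceding remark \eqref{eq:nabla}, the tensor $\nabla_\alpha x^a$ (with lower greek index on $M$ and upper latin index in $V$) encodes the natural inclusion $T_xM\hookrightarrow V$: for any tangent vector $v\in T_xM$ with local coordinates $v^\alpha$ on $M$ and components $v^a$ in $V$, one has $v^\alpha\nabla_\alpha x^a=v^a$.

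Next, I fix $x\in M$ and choose a (pseudo-)orthonormal frame $e_1,\ldots,e_n$ for $T_xM$, i.e.\ with $g(e_i,e_j)=\epsilon_i\delta_{ij}$ where $\epsilon_i=\pm 1$.  The inverse metric on $M$ then reads $g^{\alpha\beta}=\sum_i\epsilon_ie_i^\alpha e_i^\beta$, and two applications of the observation above give
\[
	\nabla_\delta x^a\nabla^\delta x^b
	=\sum_i\epsilon_i\bigl(e_i^\delta\nabla_\delta x^a\bigr)\bigl(e_i^\epsilon\nabla_\epsilon x^b\bigr)
	=\sum_i\epsilon_ie_i^ae_i^b
	\fullstop
\]
This is precisely the tensor expression of the identity endomorphism of $T_xM$, viewed inside $V$.

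Finally, I extend $\{e_1,\ldots,e_n\}$ by a unit normal vector $n$ (with $\epsilon_n\coloneq g(n,n)=\pm 1$) to an orthonormal basis of all of $V$.  Then $g^{ab}=\sum_i\epsilon_ie_i^ae_i^b+\epsilon_nn^an^b$, whence $\nabla_\delta x^a\nabla^\delta x^b=g^{ab}-\epsilon_nn^an^b$.  It only remains to identify $n$ in each case: in the non-flat case $M=\{g(x,x)=1\}$ the position vector $x$ itself is a unit normal at $x$ (the tangent space being $x^\perp$), so $n=x$ and $\epsilon_n=+1$; in the flat case $M=\{g(x,u)=1\}$ the (implicitly unit) normal is $u$, so $n=u$ and $\epsilon_n=+1$.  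Substitution gives the stated formulas.  The only subtlety is sign bookkeeping in the pseudo-\name{Riemann}ian setting, which is harmless here because $\epsilon_n=+1$ in both standard models; no other part of the argument presents real difficulty.
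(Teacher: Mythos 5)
Your proof is correct and takes essentially the same route as the paper's: complete a frame of $T_xM$ by the unit normal ($x$ in the non-flat case, $u$ in the flat case) to a frame of $V$, and identify the contraction $\nabla_\delta x^a\nabla^\delta x^b$ with $g^{ab}$ minus the normal contribution. The paper works with an arbitrary tangent basis and its dual where you use a (pseudo-)orthonormal one, but that is only a cosmetic difference.
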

\begin{proof}
	Let $e_1,\ldots,e_n$ be a basis of $T_xM$ and complete it with a unit normal vector $e_0\coloneq u$ to a basis of $V$.
	Then on one hand
	\begin{align*}
		\sum_{i,j=0}^ng(e^i,e^j)\nabla_{e_i}x^a\nabla_{e_j}x^b
		&=\sum_{i,j=1}^ng(e^i,e^j)\nabla_{e_i}x^a\nabla_{e_j}x^b+\nabla_ux^a\nabla_ux^b
		\\
		&=g^{\alpha\beta}\nabla_\alpha x^a\nabla_\beta x^b+u^au^b
		\fullstop
	\end{align*}
	On the other hand, choosing the standard basis of $V$ instead, the left hand side is just $g^{ab}$.  This proves the lemma,
	remarking that we can choose $u=x$ if $M$ is not flat.
\end{proof}
For flat $M$ the lemma yields
\begin{equation}
	\label{eq:N:flat}
	\begin{split}	
		\bar N\indices{^\alpha_{\beta\gamma}}
		=&\;
		\bar g^{b_2d_1}S_{a_1a_2b_1b_2}S_{c_1c_2d_1d_2}
		x^{a_1}x^{a_2}x^{c_1}
		\nabla^\alpha x^{b_1}\nabla_\beta x^{d_2}\nabla_\gamma x^{c_2}
		\phantom{\comma}
		\\+&\;
		\bar g^{b_1c_2}S_{a_1a_2b_1b_2}S_{c_1c_2d_1d_2}
		x^{a_1}x^{a_2}x^{c_1}
		\nabla^\alpha x^{d_1}\nabla_\beta x^{b_2}\nabla_\gamma x^{d_2}
		\comma
	\end{split}
\end{equation}
where $\bar g\coloneq g^{ab}-u^au^b$.  In all other cases we have
\[
	\begin{split}
		\bar N\indices{^\alpha_{\beta\gamma}}
		=&\;
		\bigl(g^{b_2d_1}-x^{b_2}x^{d_1}\bigr)S_{a_1a_2b_1b_2}S_{c_1c_2d_1d_2}
		x^{a_1}x^{a_2}x^{c_1}
		\nabla^\alpha x^{b_1}\nabla_\beta x^{d_2}\nabla_\gamma x^{c_2}
		\phantom{\fullstop}
		\\+&\;
		\bigl(g^{b_1c_2}-x^{b_1}x^{c_2}\bigr)S_{a_1a_2b_1b_2}S_{c_1c_2d_1d_2}
		x^{a_1}x^{a_2}x^{c_1}
		\nabla^\alpha x^{d_1}\nabla_\beta x^{b_2}\nabla_\gamma x^{d_2}
		\fullstop
	\end{split}
\]
But here the two subtracted terms vanish by the \name{Bianchi} identity because they contain the terms
$x^{a_1}x^{a_2}x^{b_2}S_{a_1a_2b_1b_2}$ respectively $x^{a_1}x^{a_2}x^{b_1}S_{a_1a_2b_1b_2}$.  This allows us to use
\eqref{eq:N:flat} for \emph{all} models $M$ if we define
\begin{equation}
	\label{eq:gbar}
	\bar g^{ab}\coloneq
	\begin{cases}
		g^{ab}-u^au^b & \text{if $M$ is flat} \\
		g^{ab}        & \text{otherwise}
		\fullstop
	\end{cases}
\end{equation}
In the case of a hyperplane $M\subset V$, the tensor $\bar g^{ab}$ is the pullback of the metric on $M$ via the orthogonal
projection $V\to M$ and thus degenerated.  Note that we still lower and rise indices with the metric $g^{ab}$ and not with $\bar
g^{ab}$.

In \eqref{eq:N:flat} the lower indices $b_2,d_1$ respectively $b_1,c_2$ are contracted with $\bar g$.  We can make use of the
symmetries of $S_{a_1a_2b_1b_2}$ to bring these indices to the first position:
\[
	\begin{split}
		\bar N\indices{^\alpha_{\beta\gamma}}
		&=
		\bar g^{b_2d_1}S_{b_2b_1a_1a_2}S_{d_1d_2c_1c_2}
		x^{a_1}x^{a_2}x^{c_1}
		\nabla^\alpha x^{b_1}\nabla_\beta x^{d_2}\nabla_\gamma x^{c_2}
		\\&+
		\bar g^{b_1c_2}S_{b_1b_2a_1a_2}S_{c_2c_1d_1d_2}
		x^{a_1}x^{a_2}x^{c_1}
		\nabla^\alpha x^{d_1}\nabla_\beta x^{b_2}\nabla_\gamma x^{d_2}
		\fullstop
	\end{split}
\]
Renaming, lowering and rising indices appropriately finally results in
\begin{equation}
	\label{eq:Nijenhuis:nonanti}
	\bar N_{\alpha\beta\gamma}
	=
	\bar g_{ij}\bigl(
		S\indices{^i_{a_2b_1b_2}}S\indices{^j_{c_2d_1d_2}}+
		S\indices{^i_{c_2b_1b_2}}S\indices{^j_{d_1a_2d_2}}
	\bigr)
	x^{b_1}x^{b_2}x^{d_1}\nabla_\alpha x^{a_2}\nabla_\beta x^{c_2}\nabla_\gamma x^{d_2}
	\fullstop
\end{equation}
In what follows we will substitute this expression together with \eqref{eq:K} into each of the three integrability conditions
\eqref{eq:TNS} and transform them into purely algebraic integrability conditions.

\subsection{The first integrability condition}

The first integrability condition \eqref{eq:TNS:1} can be written as $\bar N_{[\alpha\beta\gamma]}=0$.  For
the expression \eqref{eq:Nijenhuis:nonanti} this is equivalent to the vanishing of the antisymmetrisation in the upper indices
$a_2,c_2,d_2$:
\[
	\bar g_{ij}\bigl(
		S\indices{^i_{a_2b_1b_2}}S\indices{^j_{c_2d_1d_2}}+
		S\indices{^i_{c_2b_1b_2}}S\indices{^j_{d_1a_2d_2}}
	\bigr)
	x^{b_1}x^{b_2}x^{d_1}\nabla_\alpha x^{[a_2}\nabla_\beta x^{c_2}\nabla_\gamma x^{d_2]}
	=0
	\fullstop
\]
Due to the symmetry \eqref{eq:S:sym} the second term to vanishes.  If we write $u$, $v$ and $w$ for the tangent vectors
$\partial_\alpha$, $\partial_\beta$ respectively $\partial_\gamma$ and use \eqref{eq:nabla} in order to get rid of the indices
and $\nabla$'s, we get the condition
\begin{equation}
	\label{eq:paired}
	\begin{split}
		\bar g_{ij}S\indices{^i_{a_2b_1b_2}}S\indices{^j_{c_2d_1d_2}}
		x^{b_1}x^{b_2}x^{d_1}u^{[a_2}v^{c_2}w^{d_2]}
		=0&\\
		\qquad
		\forall x\in M\quad\forall u,v,w\in T_xM&
	\end{split}
\end{equation}
on the symmetrised algebraic curvature tensor $S_{a_1a_2b_1b_2}$.

Note that tensors of the form $x^{b_1}x^{b_2}x^{d_1}u^{[a_2}v^{c_2}w^{d_2]}$ are completely symmetric in the indices
$b_1,b_2,d_1$ and completely antisymmetric in the remaining indices $a_2,c_2,d_2$.  On the level of isomorphism classes the
decomposition of the corresponding $\GLG(V)$-representation space $\Sym^3V\otimes\Lambda^3V$ into irreducible components results
from \eqref{eq:111x3}.  The following lemma gives an explicit realisation of this decomposition in terms of orthogonal
projection operators:
\begin{lemma}
	\label{lem:hooks}
	\begin{align}
		\label{eq:hooks}
		&\frac1{q!}\young(\aone,\smallvdots,\asq)\cdot\frac1{p!}\young(\sone\smallhdots\ssp)\\\notag
		&\qquad=
		\frac{\frac p{q+1}}{(p+q)p!^2q!^2}{\young(\sone\smallhdots\ssp,\aone,\smallvdots,\asq)}         {\young(\sone\smallhdots\ssp,\aone,\smallvdots,\asq)}^\adjoint+
		\frac{\frac q{p+1}}{(p+q)p!^2q!^2}{\young(\aone\sone\smallhdots\ssp ,\smallvdots,\asq)}^\adjoint{\young(\aone\sone\smallhdots\ssp ,\smallvdots,\asq)}
	\end{align}
	In particular, for $p=q=3$:
	\begin{equation}
		\label{eq:projectors}
		\frac1{3!}\young(\ctwo,\dtwo,\atwo)\cdot\frac1{3!}\young(\btwo\bone\done)
		=
		\frac1{2^73^4}{\young(     \btwo\bone\done,\ctwo,\dtwo,\atwo)}         {\young(     \btwo\bone\done,\ctwo,\dtwo,\atwo)}^\adjoint+
		\frac1{2^73^4}{\young(\ctwo\btwo\bone\done      ,\dtwo,\atwo)}^\adjoint{\young(\ctwo\btwo\bone\done      ,\dtwo,\atwo)}
		\fullstop
	\end{equation}
\end{lemma}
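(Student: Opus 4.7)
The plan is to recognise~\eqref{eq:hooks} as an equality of self-adjoint idempotents in the group algebra $\R\SG_{p+q}$, with both sides being the orthogonal projector onto the same subspace. The left-hand side is the product of $(1/p!)$ times the row-symmetriser on $\{s_1,\ldots,s_p\}$ and $(1/q!)$ times the column-antisymmetriser on $\{a_1,\ldots,a_q\}$; since these act on disjoint index sets they commute, and each is a self-adjoint idempotent, so their product is a self-adjoint idempotent whose image in $V^{\otimes(p+q)}$ (via \name{Weyl}'s construction) is $\Sym^p V\otimes\Lambda^q V$. By the \name{Littlewood}--\name{Richardson} (equivalently \name{Pieri}) rule -- for $p=q=3$ this is the specialisation~\eqref{eq:111x3} -- this image decomposes into exactly two irreducible $\GLG(V)$-subrepresentations, of hook type $(p,1^q)$ and $(p+1,1^{q-1})$. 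These are precisely the frames carried by the two tableaux $\tau_1$ and $\tau_2$ on the right of~\eqref{eq:hooks}, and the plan is to show that the two terms on the right are exactly the orthogonal projectors onto these two irreducible components.

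To identify these projectors, I use the factorisations $\tau_i = R_{\tau_i}C_{\tau_i}$, which yield $\tau_1\tau_1^\adjoint = R_{\tau_1}C_{\tau_1}^2 R_{\tau_1}$ and $\tau_2^\adjoint\tau_2 = C_{\tau_2}R_{\tau_2}^2 C_{\tau_2}$. Combined with the elementary relations $C_\tau^2=(\prod_c c!)\,C_\tau$ and $R_\tau^2=(\prod_r r!)\,R_\tau$, where $r$ and $c$ range over row- and column-lengths of $\tau$, and the fundamental identity $\tau^2=\hook\tau\,\tau$ from~\eqref{eq:square}, a short calculation (reducing $RCRCR$ to $\hook\tau \cdot RCR$ via $RCRC=\tau^2=\hook\tau\,\tau$) gives $(\tau\tau^\adjoint)^2 = (\prod_r r!)(\prod_c c!)\,\hook\tau\,\tau\tau^\adjoint$, and the analogous relation for $\tau^\adjoint\tau$. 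Substituting the row/column-data of the hook $(p,1^q)$ -- one row of length $p$ together with $q$ rows of length $1$, and one column of length $q+1$ together with $p-1$ columns of length $1$ -- along with $\hook{(p,1^q)}=(p+q)(p-1)!\,q!$, and simplifying using $p!(p-1)! = (p!)^2/p$, reproduces the prefactor $p/((q+1)(p+q)p!^2q!^2)$ in~\eqref{eq:hooks}. The analogous computation for $\tau_2^\adjoint\tau_2$ with hook $(p+1,1^{q-1})$ and $\hook{(p+1,1^{q-1})}=(p+q)p!(q-1)!$ reproduces the second prefactor $q/((p+1)(p+q)p!^2q!^2)$.

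With both terms thus identified as genuine orthogonal projectors, they are mutually orthogonal since they project onto non-isomorphic $\SG_{p+q}$-irreducibles, and hence their sum is the orthogonal projector onto the full direct sum of these two irreducible components. But this direct sum coincides with the image of the left-hand side, so both sides are the same self-adjoint idempotent. The main obstacle is precisely the book-keeping that produces the asymmetric factors $p/(q+1)$ and $q/(p+1)$: this asymmetry reflects whether the shared corner of the hook sits in the long row or in the long column. The special case~\eqref{eq:projectors} with $p=q=3$, where the two prefactors coincide and collapse to the single scalar $1/(2^7 3^4)$, then follows by direct substitution.
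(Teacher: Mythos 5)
Your proof is correct and follows essentially the same route as the paper's: both sides are identified as (suitably normalised) orthogonal projectors, the \name{Littlewood}--\name{Richardson} (\name{Pieri}) decomposition of $\Sym^pV\otimes\Lambda^qV$ into the two hook types is invoked, and the two right-hand terms are matched with the two irreducible summands. The only step you leave implicit that the paper states explicitly is the containment $\im P_1\oplus\im P_2\subseteq\im P$ (i.\,e.\ $PP_1=P_1$ and $PP_2=P_2$), which is needed before the multiplicity count lets you identify the direct sum with the image of the left-hand side; on the other hand your explicit verification of the normalisation constants, which the paper dismisses with ``one easily checks'', is a welcome addition.
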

\begin{proof}
	Write \eqref{eq:hooks} as $P=P_1+P_2$.  Decomposing temporarily the \name{Young} symmetrisers on the right hand side as in
	\eqref{eq:Young_symmetriser} into a product of a symmetriser and an antisymmetriser and using \eqref{eq:square}, one easily
	checks that $P$, $P_1$ and $P_2$ are orthogonal projectors verifying $P_1P_2=0=P_2P_1$, $PP_1=P_1$ and $PP_2=P_2$.
	Therefore $P_1+P_2$ is an orthogonal projector with image $\im P_1\oplus\im P_2\subseteq\im P$.  The decomposition of the
	isomorphism class of $\im P$ into irreducible components is given by the \name{Littlewood}-\name{Richardson} rule as
	\[
		q\left\{{\young(~,\smallvdots,~)}\right.\boxtimes\overbrace{\young(~\smallhdots~)}^p
		\isom
		\overbrace{\young(~\smallhdots~,~,\smallvdots,~)}^p
		\oplus
		\left.\young(~~\smallhdots~,\smallvdots,~)\right\}q
		\fullstop
	\]
	The \name{Young} frames on the right hand side are those appearing in the expression for $P_1$ respectively $P_2$.  Hence
	they describe the isomorphism classes of $\im P_1$ and $\im P_2$.  This shows that $\im P$ and $\im (P_1+P_2)=\im
	P_1\oplus\im P_2$ have the same dimension and are thus equal.  This implies $P=P_1+P_2$.
\end{proof}
\begin{remark}
	The lemma can be interpreted as an explicit splitting of the terms in the long exact sequence
	\[
		0
		\rightarrow\Sym^dV
		\rightarrow\ldots
		\rightarrow\Sym^pV\otimes\Lambda^qV
		\rightarrow\Sym^{p-1}\otimes\Lambda^{q+1}
		\rightarrow\ldots
		\rightarrow\Lambda^dV
		\rightarrow0
		\comma
	\]
	known as the \dfn{\name{Koszul} complex}.
\end{remark}
Applying \eqref{eq:projectors} to the tensor $x^{b_1}x^{b_2}x^{d_1}u^{[a_2}v^{c_2}w^{d_2]}$, we conclude that
\[
	\begin{split}
		&x^{b_1}x^{b_2}x^{d_1}u^{[a_2}v^{c_2}w^{d_2]}\\
		&\quad
		=\frac1{2^73^4}
		\left(
			 \young(\btwo \bone \done){\young(\btwo,\ctwo,\dtwo,\atwo)}^2\young(\btwo \bone \done)
			+\young(\ctwo,\dtwo,\atwo){\young(\ctwo \btwo \bone \done)}^2\young(\ctwo,\dtwo,\atwo)
		\right)
		x^{b_1}x^{b_2}x^{d_1}u^{[a_2}v^{c_2}w^{d_2]}\\&\quad
		=\text{constant}\cdot
		\left(
			 {\young(\btwo\bone\done,\ctwo,\dtwo,\atwo)}
			+{\young(\ctwo\btwo\bone \done,\dtwo,\atwo)}^\adjoint
		\right)
		x^{b_1}x^{b_2}x^{d_1}u^{a_2}v^{c_2}w^{d_2}
		\fullstop
	\end{split}
\]
In the last step we omitted an explicit antisymmetrisation in $a_2$, $c_2$, $d_2$, since this is already carried out implicitely
by each of the \name{Young} symmetrisers.  Accordingly the left hand side of \eqref{eq:paired} splits into two terms.  The
following lemma shows that the second of them, namely
\begin{align*}
	&
    \bar g_{ij}S\indices{^i_{a_2b_1b_2}}S\indices{^j_{c_2d_1d_2}}
    \left(
		{\young(\ctwo\btwo\bone\done,\dtwo,\atwo)}^\adjoint
		x^{b_1}x^{b_2}x^{d_1}u^{a_2}v^{c_2}w^{d_2}
	\right)
	\\&\qquad=
    \left(
		\young(\ctwo\btwo\bone\done,\dtwo,\atwo)
		\bar g_{ij}S\indices{^i_{a_2b_1b_2}}S\indices{^j_{c_2d_1d_2}}
	\right)
	x^{b_1}x^{b_2}x^{d_1}u^{a_2}v^{c_2}w^{d_2}
	\comma
\end{align*}
vanishes identically.
\begin{lemma}
	\begin{equation}
		\label{eq:hook=0}
		\young(\ctwo\btwo\bone\done,\dtwo,\atwo)
		\bar g_{ij}S\indices{^i_{a_2b_1b_2}}S\indices{^j_{c_2d_1d_2}}
		=0
	\end{equation}
\end{lemma}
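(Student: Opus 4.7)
The plan is to translate the lemma into a representation-theoretic obstruction: the $\GLG(V)$-irrep corresponding to the \name{Young} shape $(4,1,1)$ simply does not appear in the decomposition of the space in which the tensor
\[
	T_{a_2 b_1 b_2 c_2 d_1 d_2} \coloneq \bar g_{ij} S\indices{^i_{a_2 b_1 b_2}} S\indices{^j_{c_2 d_1 d_2}}
\]
lives.  Since $\bar g$ is symmetric and since the two occurrences of $S$ are identical, the eight-index product $S_{a_1 a_2 b_1 b_2} S_{c_1 c_2 d_1 d_2}$ is invariant under the simultaneous swap of the two index blocks, hence factors through the symmetric square $\Sym^2\bigl\{\textYF\yng(2,2)\bigr\}$; contracting $a_1$ with $c_1$ against $\bar g \in \Sym^2 V \cong \bigl\{\textYF\yng(2)\bigr\}$ then realises $T$ inside a six-index subrepresentation of $V^{\otimes6}$.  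The \name{Young} symmetriser $\young(\ctwo\btwo\bone\done,\dtwo,\atwo)$ differs only by a nonzero scalar from the projector onto its isomorphism class $\bigl\{\textYF\yng(4,1,1)\bigr\}$, so it suffices to show that this class has multiplicity zero in the ambient space.

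The first step is the plethysm decomposition
\[
	\Sym^2\Bigl\{\textYF\yng(2,2)\Bigr\}
	\;\cong\;
	\Bigl\{\textYF\yng(4,4)\Bigr\}
	\oplus\Bigl\{\textYF\yng(4,2,2)\Bigr\}
	\oplus\Bigl\{\textYF\yng(3,3,1,1)\Bigr\}
	\oplus\Bigl\{\textYF\yng(2,2,2,2)\Bigr\}
	\comma
\]
the classical evaluation of $s_{(2)}[s_{(2,2)}]$ in the algebra of symmetric functions, which may also be extracted from the plethysm tables in \cite{Fulling&King&Wybourne&Cummins}.  The second step is the \name{Pieri} rule: contracting an eight-box irrep $\{\lambda\}$ with $\bigl\{\textYF\yng(2)\bigr\}$ yields exactly the sum of those six-box irreps $\{\mu\}$ for which $\lambda/\mu$ is a horizontal $2$-strip, i.\,e.\ the two removed boxes lie in distinct columns.

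It only remains to check, shape by shape, that for none of the four $\lambda$ above does $(4,1,1)$ arise as such a $\mu$.  The containment $(4,1,1) \subset \lambda$ already fails for $\lambda = (3,3,1,1)$ and $\lambda = (2,2,2,2)$; for $\lambda = (4,4)$ the two boxes in $\lambda \setminus (4,1,1)$ are forced into the single column $4$, and for $\lambda = (4,2,2)$ the excess boxes $(2,2)$ and $(3,2)$ both sit in column $2$.  In every case the horizontal-strip requirement fails, so $\bigl\{\textYF\yng(4,1,1)\bigr\}$ has multiplicity zero in the ambient space and is annihilated by the \name{Young} symmetriser, yielding the lemma.  The main obstacle is the plethysm computation in the first step; everything thereafter is combinatorial bookkeeping.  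As a sanity check, the same analysis shows that the partner shape $(3,1,1,1)$ \emph{does} occur — removing the horizontal $2$-strip at positions $(2,2),(2,3)$ of $\bigl\{\textYF\yng(3,3,1,1)\bigr\}$ produces it — which is precisely why the companion symmetriser $\young(\btwo\bone\done,\ctwo,\dtwo,\atwo)$ appearing in \eqref{eq:projectors} is not likewise eliminated.
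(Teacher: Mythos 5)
Your argument is correct, but it is not the route the paper takes: the paper proves this lemma by brute force, splitting the hook symmetriser into $\young(\ctwo\btwo\bone\done)\young(\ctwo,\dtwo,\atwo)$, reducing the antisymmetriser to six explicit terms and cancelling them in pairs via the \name{Bianchi} identity, the symmetrised \name{Bianchi} identity and the relabelling $i\leftrightarrow j$. What you have written is essentially the content of the remark the paper places \emph{after} the lemma (there credited to the symmetry classification of \cite{Fulling&King&Wybourne&Cummins} and stated only for positive definite $g$), worked out in detail: your plethysm $\Sym^2\{\textYF\yng(2,2)\}\cong\{\textYF\yng(4,4)\}\oplus\{\textYF\yng(4,2,2)\}\oplus\{\textYF\yng(3,3,1,1)\}\oplus\{\textYF\yng(2,2,2,2)\}$ is right (and the restriction to the \emph{symmetric} square is the crucial point, since $(4,3,1)$ in the antisymmetric part does admit $(4,1,1)$ after removal of a horizontal $2$-strip), and the strip check is right. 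Two things deserve tightening. First, your ``\name{Pieri} rule for contraction'' is not automatic, because contraction against $\bar g$ is only $\OG(V,g)$-equivariant, not $\GLG(V)$-equivariant; the clean justification is by duality: pair $T$ against an element $u$ of the $(4,1,1)$-isotypic component of $V^{\otimes6}$, move the contraction across the pairing so that it becomes the insertion $u\mapsto u\otimes\bar g$ with $\bar g\in\Sym^2V\cong\{\textYF\yng(2)\}$, apply the genuine \name{Pieri} rule to $\{\textYF\yng(4,1,1)\}\otimes\{\textYF\yng(2)\}$, and use that distinct $\SG_8$-isotypic components of $\bar V^{\otimes8}$ and $V^{\otimes8}$ pair to zero. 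Phrased this way the argument needs no inner product and no nondegeneracy, so it covers all signatures and the degenerate $\bar g$ of the flat case, which is a genuine advantage over the paper's remark. (Also ``the projector onto its isomorphism class'' is loose --- the \name{Young} symmetriser projects onto one copy inside that isotypic component --- but all you need is that it annihilates every other isotypic component, which is true.) Second, a small slip: for $\lambda=(4,4)$ the containment $(4,1,1)\subseteq(4,4)$ already fails, since $(4,4)$ has only two rows, so there is no ``column $4$'' argument to make there; the conclusion is of course unaffected.
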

Before we prove the lemma, we mention an identity which we will frequently use and which is obtained from symmetrising the
\name{Bianchi} identity \eqref{eq:S:Bianchi} in $b_1,b_2$:
\begin{equation}
	\label{eq:Bianchi:sym}
	\young(\bone\btwo)S\indices{^i_{a_2b_1b_2}}=-2\young(\bone\btwo)S\indices{^i_{b_1b_2a_2}}
	\fullstop
\end{equation}
We refer to this identity as \dfn{symmetrised \name{Bianchi} identity}.
\begin{proof}
	\[
		\begin{array}{r@{}l}
			\multicolumn{2}{l}{
				\young(\ctwo\btwo\bone\done,\dtwo,\atwo)
				\bar g_{ij}S\indices{^i_{a_2b_1b_2}}S\indices{^j_{c_2d_1d_2}}
				=
				\young(\ctwo\btwo\bone\done)\young(\ctwo,\dtwo,\atwo)
				\bar g_{ij}S\indices{^i_{a_2b_1b_2}}S\indices{^j_{c_2d_1d_2}}
			}
			\\[2\bigskipamount]
			\qquad=\young(\ctwo\btwo\bone\done)g_{ij}
			\bigl(
				\phantom
				+S\indices{^i_{a_2b_1b_2}}S\indices{^j_{c_2d_1d_2}}&-S\indices{^i_{d_2b_1b_2}}S\indices{^j_{c_2d_1a_2}}\\
				+S\indices{^i_{d_2b_1b_2}}S\indices{^j_{a_2d_1c_2}}&-S\indices{^i_{a_2b_1b_2}}S\indices{^j_{d_2d_1c_2}}\\
				+S\indices{^i_{c_2b_1b_2}}S\indices{^j_{d_2d_1a_2}}&-S\indices{^i_{c_2b_1b_2}}S\indices{^j_{a_2d_1d_2}}
			\bigr)
			\end{array}
	\]
	Regard the parenthesis under complete symmetrisation in $c_2,b_2,b_1,d_1$.  The last two terms vanish due to the
	\name{Bianchi} identity \eqref{eq:Bianchi:alt}.  Renaming $i,j$ as $j,i$ in the third term shows that it cancels the fourth.
	That the first two also cancel each other can be seen by applying twice the symmetrised \name{Bianchi} identity
	\eqref{eq:Bianchi:sym}, once to $S\indices{^i_{a_2b_1b_2}}$ and once to $S\indices{^i_{c_2d_1d_2}}$.
\end{proof}
\begin{remark}
	If the inner product $g$ is positive definite, then $M$ is the unit sphere.  In this case the lemma above also follows from
	the symmetry classification of \name{Riemann} tensor polynomials \cite{Fulling&King&Wybourne&Cummins}, since the tensor
	$g_{ij}R\indices{^i_{a_2b_1b_2}}R\indices{^j_{c_2d_1d_2}}$ has symmetry type
	\[
		\yng(4,2)\oplus\yng(3,2,1)\oplus\yng(3,1,1,1)\oplus\yng(2,2,2)
	\]
	and $g_{ij}S\indices{^i_{a_2b_1b_2}}S\indices{^j_{c_2d_1d_2}}$ can be expressed in terms of this tensor via
	\eqref{eq:R->S}.
\end{remark}
Resuming, the first integrability condition is equivalent to
\begin{equation}
	\label{eq:restricted}
	\begin{split}
        \bar g_{ij}S\indices{^i_{a_2b_1b_2}}S\indices{^j_{c_2d_1d_2}}
		\left(
			\young(\btwo\bone\done,\ctwo,\dtwo,\atwo)
			x^{b_1}x^{b_2}x^{d_1}u^{a_2}v^{c_2}w^{d_2}
		\right)
		=0&\\
		\forall x\in M\quad\forall u,v,w\in T_xM&
		\fullstop
	\end{split}
\end{equation}
We can drop the restriction $u,v,w\in T_xM$ in \eqref{eq:restricted}.  Indeed, decomposing $u,v,w\in V$ according to the
splitting $V=T_xM\oplus\R x$ shows
\[
	\young(\btwo\bone\done,\ctwo,\dtwo,\atwo)
	x^{b_1}x^{b_2}x^{d_1}u^{a_2}v^{c_2}w^{d_2}
	=0
	\qquad
	\text{if $u=x$ or $v=x$ or $w=x$}
	\fullstop
\]
This follows from \name{Dirichlet}'s drawer principle.  This trick is crucial, as it allows us to deal with
$\GLG(N)$-representations instead of the more complicated $\OG(N)$-representations.  Obviously, we can also drop the restriction
$x\in M$ since $\R M$ is dense in $V$.

Next we use the fact that tensors of the form $x^{b_1}x^{b_2}x^{d_1}$ and tensors of the form
$\young(\btwo\bone\done)x^{b_1}y^{b_2}z^{d_1}$ both span the same space, namely $\Sym^3V$.  With this remark condition
\eqref{eq:restricted} is now equivalent to
\[
	\begin{split}
        \bar g_{ij}S\indices{^i_{a_2b_1b_2}}S\indices{^j_{c_2d_1d_2}}
		\young(\btwo\bone\done,\ctwo,\dtwo,\atwo)
		\young(\btwo\bone\done)
		x^{b_1}y^{b_2}z^{d_1}u^{a_2}v^{c_2}w^{d_2}
		=0&\\
		\forall x,y,z,u,v,w\in V&
		\fullstop
	\end{split}
\]
But the operator
\begin{align*}
	&
	\young(\btwo\bone\done,\ctwo,\dtwo,\atwo)
	\young(\btwo\bone\done)
	=
	\young(\btwo\bone\done)
	\young(\btwo,\ctwo,\dtwo,\atwo)
	\young(\btwo\bone\done)
	=
	\frac1{4!}
	\young(\btwo\bone\done)
	\young(\btwo,\ctwo,\dtwo,\atwo)
	\young(\btwo,\ctwo,\dtwo,\atwo)
	\young(\btwo\bone\done)
	\\&\quad=
	\frac1{4!}
	{\young(\btwo\bone\done,\ctwo,\dtwo,\atwo)}
	{\young(\btwo\bone\done,\ctwo,\dtwo,\atwo)}^\adjoint
\end{align*}
is self-adjoint and hence
\begin{multline*}
	\bar g_{ij}S\indices{^i_{a_2b_1b_2}}S\indices{^j_{c_2d_1d_2}}
	\left(
		\young(\btwo\bone\done,\ctwo,\dtwo,\atwo)
		\young(\btwo\bone\done)
		x^{b_1}y^{b_2}z^{d_1}u^{a_2}v^{c_2}w^{d_2}
	\right)
	\\=
	\frac1{4!}
	\left(
		{\young(\btwo\bone\done,\ctwo,\dtwo,\atwo)}
		{\young(\btwo\bone\done,\ctwo,\dtwo,\atwo)}^\adjoint
        \bar g_{ij}S\indices{^i_{a_2b_1b_2}}S\indices{^j_{c_2d_1d_2}}
	\right)
	x^{b_1}y^{b_2}z^{d_1}u^{a_2}v^{c_2}w^{d_2}
	\fullstop
\end{multline*}
Now recall that $V^{\otimes6}$ is spanned by tensors of the form $x^{b_1}y^{b_2}z^{d_1}u^{a_2}v^{c_2}w^{d_2}$ and that the dual
pairing $\bar V^{\otimes6}\times V^{\otimes 6}\to\R$ is non-degenerate.  This allows us finally to write the first integrability
condition in the purely algebraic form
\begin{equation}
	\label{eq:penultimate}
	{\young(\btwo\bone\done,\ctwo,\dtwo,\atwo)}
	{\young(\btwo\bone\done,\ctwo,\dtwo,\atwo)}^\adjoint
	\bar g_{ij}S\indices{^i_{a_2b_1b_2}}S\indices{^j_{c_2d_1d_2}}=0
	\comma
\end{equation}
which is independent of $x,y,z,u,v,w$.  We will now give a number of equivalent formulations.

\begin{proposition}[First integrability condition]
	The following conditions are equivalent to the first integrability condition \eqref{eq:TNS:1} for a \name{Killing} tensor on
	a constant sectional curvature manifold $M$:
	\begin{enumerate}
		\item
			\label{it:KS}
			The corresponding symmetrised algebraic curvature tensor $S$ satisfies
			\begin{equation}
				\label{eq:KS}
				P\;\bar g_{ij}S\indices{^i_{a_2b_1b_2}}S\indices{^j_{c_2d_1d_2}}=0
				\phantomequation{eq:KS:a}{a}
				\phantomequation{eq:KS:b}{b}
				\phantomequation{eq:KS:c}{c}
				\phantomequation{eq:KS:d}{d}
				\comma
			\end{equation}
			where $P$ is any of the following symmetry operators
			\footnote{Up to a constant they are all projectors.}
			\begin{align}
				\label{eq:operators}
				(a)\;&{\young(\btwo\bone\done,\ctwo,\dtwo,\atwo)}^\adjoint&
				(b)\;&{\young(\ctwo,\dtwo,\atwo)                }
				      {\young(\btwo\bone\done)                  }&
				(c)\;&{\young(\btwo,\ctwo,\dtwo,\atwo)          }&
				(d)\;&{\young(\btwo\bone\done,\ctwo,\dtwo,\atwo)}
			\end{align}
		\item
			\label{it:KR}
			The corresponding algebraic curvature tensor $R$ satisfies
			\begin{equation}
				\label{eq:KR}
				P\;\bar g_{ij}R\indices{^i_{b_1a_2b_2}}R\indices{^j_{d_1c_2d_2}}=0
				\phantomequation{eq:KR:c}{c}
				\comma
			\end{equation}
			where $P$ is any of the symmetry operators \eqref{eq:operators}.
	\end{enumerate}
	If $M$ is not flat, this is is in addition equivalent to:
	\begin{enumerate}[resume]
		\item
			The curvature form $\Omega\in\End(V)\otimes\Lambda^2V$of $R$, defined by
			\[
				\Omega\indices{^{a_1}_{b_1}}\coloneq R\indices{^{a_1}_{b_1a_2b_2}}dx^{a_2}\wedge dx^{b_2}
			\]
			satisfies
			\begin{equation}
				\label{eq:Omega}
				\Omega\wedge\Omega=0
				\comma
			\end{equation}
			where the wedge product is defined by taking the exterior product in the $\Lambda^2V$-component and usual matrix
			multiplication in the $\End(V)$-component.
	\end{enumerate}
\end{proposition}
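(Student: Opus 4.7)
The derivation culminating in \eqref{eq:penultimate} already establishes the equivalence of \eqref{eq:TNS:1} with form (i)(a). My plan is to bootstrap from this by showing that all four operators in \eqref{eq:operators} yield equivalent vanishing conditions on the tensor
\[
    T \coloneq \bar g_{ij}S\indices{^i_{a_2b_1b_2}}S\indices{^j_{c_2d_1d_2}},
\]
then transferring the result to $R$ via the $\GLG(V)$-isomorphism \eqref{eq:R<->S}, and finally reading (iii) off the definition of $\Omega$.

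The equivalence (a) $\Leftrightarrow$ (b) follows directly from Lemma \ref{lem:hooks} with $p = q = 3$: operator (b) decomposes, up to a positive constant, as the sum of the two orthogonal projectors $\tau\tau^\adjoint$ and ${\tau'}^\adjoint\tau'$ onto the $\textYF\yng(3,1,1,1)$- and $\textYF\yng(4,1,1)$-isotypic components of $\bar V^{\otimes 6}$, where $\tau = \young(\btwo\bone\done,\ctwo,\dtwo,\atwo)$ and $\tau' = \young(\ctwo\btwo\bone\done,\dtwo,\atwo)$. By \eqref{eq:hook=0} the second projector annihilates $T$, so $(b)T = 0$ if and only if $\tau\tau^\adjoint T = 0$, which via $\langle\tau\tau^\adjoint T, T\rangle = \|\tau^\adjoint T\|^2$ is equivalent to $\tau^\adjoint T = 0$, i.e.\ to $(a)T = 0$.

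For the chain (a) $\Leftrightarrow$ (c) $\Leftrightarrow$ (d), I would use the factorisations $\tau = \young(\btwo\bone\done)\cdot\young(\btwo,\ctwo,\dtwo,\atwo)$ and $\tau^\adjoint = \young(\btwo,\ctwo,\dtwo,\atwo)\cdot\young(\btwo\bone\done)$, together with the idempotency $\young(\btwo\bone\done)^2 = 3!\,\young(\btwo\bone\done)$. The implication $(c)T = 0 \Rightarrow (d)T = 0$ is then immediate. The converse, and likewise $(a)T = 0 \Rightarrow (c)T = 0$, is deduced from the observation that the symmetries of $T$ inherited from $S$ (symmetry in $(b_1,b_2)$ and in $(d_1,d_2)$) together with the pair symmetry supplied by $\bar g$ (under $(a_2,b_1,b_2)\leftrightarrow(c_2,d_1,d_2)$) confine the non-trivial part of $(c)T$ to the $\textYF\yng(3,1,1,1)$-isotypic $\GLG(V)$-summand of $\bar V^{\otimes 6}$, on which $\young(\btwo\bone\done)$ acts as $3!$ times the identity. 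Hence $(c)T$, $(a)T$ and $(d)T$ vanish together.

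Part (ii) is transferred from part (i) via \eqref{eq:R<->S}: substituting \eqref{eq:R->S} into $T$ produces a sum of two $R$-bilinear terms that coincide up to a common scalar after any of the operators \eqref{eq:operators} is applied, thanks to the antisymmetry of $R$ in its last two indices. For (iii), in the non-flat case $\bar g = g$ and a direct expansion gives
\[
    (\Omega\wedge\Omega)\indices{^{a_1}_{b_1}}
    = R\indices{^{a_1}_{i a_2 b_2}}\,R\indices{^i_{b_1 c_2 d_2}}\,
    dx^{a_2}\wedge dx^{b_2}\wedge dx^{c_2}\wedge dx^{d_2};
\]
wedging the four 1-forms is nothing but the antisymmetriser $\young(\btwo,\ctwo,\dtwo,\atwo)$ in $\{a_2,b_2,c_2,d_2\}$, and lowering $a_1$ with $g$ identifies $\Omega\wedge\Omega = 0$ with (ii)(c). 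The main obstacle will be the isotypic-restriction step used for (c) $\Leftrightarrow$ (a) and (d), where the full set of symmetries of $T$ must be exploited simultaneously; once that is in hand, the remaining equivalences reduce to standard symmetric-group bookkeeping.
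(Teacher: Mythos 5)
Your reduction to the tensor $T=\bar g_{ij}S\indices{^i_{a_2b_1b_2}}S\indices{^j_{c_2d_1d_2}}$, the equivalence (a)$\,\Leftrightarrow\,$(b) via \linkref{lemma}{lem:hooks}, \eqref{eq:hook=0} and the kernel identity \eqref{eq:kernels}, the trivial implication (c)$\,\Rightarrow\,$(d), and the reading of \eqref{eq:Omega} as a reformulation of \eqref{eq:KR:c} all match the paper. The genuine gap lies in the two directions where a symmetriser must be \emph{removed}, namely $(d)\,T=0\Rightarrow(b)\,T=0$ and $(a)\,T=0\Rightarrow(c)\,T=0$. The mechanism you propose --- that the symmetries of $T$ confine $(c)\,T$ to the $\textYF\yng(3,1,1,1)$-isotypic summand, on which $\young(\btwo\bone\done)$ ``acts as $3!$ times the identity'' --- is not correct: by \name{Schur}--\name{Weyl} duality that isotypic component carries the ten-dimensional \name{Specht} module of $\SG_6$, on which a three-index symmetriser is $3!$ times a \emph{proper} projection (onto the $\SG_3$-invariants), so it has a nontrivial kernel there and cannot simply be cancelled. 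Moreover, the symmetries you list (symmetry in $(b_1,b_2)$ and $(d_1,d_2)$ plus the pair swap induced by $\bar g_{ij}$) do not suffice to control these kernels: the paper's proof of (d)$\,\Leftrightarrow\,$(b), equations \eqref{eq:one}--\eqref{eq:five}, applies the symmetrised \name{Bianchi} identity \eqref{eq:Bianchi:sym} twice in an essential way, just as the proof of \eqref{eq:hook=0} does; for a generic $T$ carrying only your listed symmetries the conditions (c) and (d) are independent.

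So the step you defer as ``the main obstacle'' is in fact the heart of the proof and does not reduce to symmetric-group bookkeeping on isotypic components. To close the cycle you must either carry out the paper's explicit manipulation --- split the hook symmetriser (d) by reducing the antisymmetriser at the label $b_2$, use \eqref{eq:Bianchi:sym} to reorder indices, collapse the four resulting terms into three copies of the (b)-form, and then antisymmetrise the (b)-form in $a_2,b_2,c_2,d_2$ to reach (c) --- or else prove by other means that the \name{Bianchi} identity annihilates every component of $T$ on which $\young(\btwo\bone\done)$ fails to be injective. As written, the proposal establishes only (a)$\,\Leftrightarrow\,$(b), (c)$\,\Rightarrow\,$(d) and part (iii).
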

\begin{remark}
	In \eqref{eq:operators} we can permute the labels $a_2,b_2,c_2,d_2$ arbitrarily as well as exchange the labels $b_1,d_1$.
	This follows from the integrability condition in the form \eqref{eq:KS:c}.  In particular, in \eqref{eq:KS:b} one can
	antisymmetrise in any three of the four indices $a_2,b_2,c_2,d_2$ and symmetrise in the remaining three.
\end{remark}
\begin{proof}
	We showed that the first integrability condition \eqref{eq:TNS:1} is equivalent to \eqref{eq:penultimate}.  But this is
	equivalent to condition \eqref{eq:KS:a} since the kernels of $PP^\adjoint$ and $P^\adjoint$ coincide:
	\begin{equation}
		\label{eq:kernels}
		\quad PP^\adjoint v=0
		\quad\Leftrightarrow\quad\lVert P^\adjoint v\rVert^2=\langle v|PP^\adjoint v\rangle=0
		\quad\Leftrightarrow\quad P^\adjoint v=0
		\fullstop
	\end{equation}
	The equivalence \eqref{eq:KS:a} $\Leftrightarrow$ \eqref{eq:KS:b} follows from \eqref{eq:projectors} combined with
	\eqref{eq:hook=0}.

	The implication \eqref{eq:KS:c} $\Rightarrow$ \eqref{eq:KS:d} is trivial.  We finish the proof of part \ref{it:KS} by
	proving \eqref{eq:KS:d} $\Rightarrow$ \eqref{eq:KS:b} $\Rightarrow$ \eqref{eq:KS:c} through a stepwise manipulation of
	\begin{subequations}
		\begin{equation}
			\label{eq:one}
			\young(\btwo\bone\done,\ctwo,\dtwo,\atwo)
			\bar g_{ij}S\indices{^i_{a_2b_1b_2}}S\indices{^j_{c_2d_1d_2}}
			=
			\young(\btwo\bone\done)
			\young(\btwo,\ctwo,\dtwo,\atwo)
			\bar g_{ij}S\indices{^i_{a_2b_1b_2}}S\indices{^j_{c_2d_1d_2}}
			\fullstop
		\end{equation}
		In order to sum over all $q!$ permutations of $q$ indices, one can take the sum over $q$ cyclic permutations, chose
		one index and then sum over all $(q-1)!$ permutations of the remaining $(q-1)$ indices.  Apply this to the
		antisymmetrisation in $a_2,b_2,c_2,d_2$ (fixing $b_2$):
		\begin{equation}
			\label{eq:two}
			\begin{split}
				\eqref{eq:one}
				=
				\young(\btwo\bone\done)
				\young(\ctwo,\dtwo,\atwo)
				\bar g_{ij}
				\Bigl(
					&S\indices{^i_{\underline a_2b_1           b_2}}S\indices{^j_{\underline c_2d_1\underline d_2}}-
					 S\indices{^i_{           b_2b_1\underline c_2}}S\indices{^j_{\underline d_2d_1\underline a_2}}\\+\,
					&S\indices{^i_{\underline c_2b_1\underline d_2}}S\indices{^j_{\underline a_2d_1           b_2}}-
					 S\indices{^i_{\underline d_2b_1\underline a_2}}S\indices{^j_{           b_2d_1\underline c_2}}
				\Bigr)
				\fullstop
			\end{split}
		\end{equation}
		For a better readability we underlined each antisymmetrised index.  Now use the symmetrised \name{Bianchi} identity
		\eqref{eq:Bianchi:sym} to bring the index $c_2$ from the fourth to the second index position:
		\begin{equation}	
			\label{eq:three}
			\begin{split}
				\eqref{eq:two}
				=
				\young(\btwo\bone\done)
				\young(\ctwo,\dtwo,\atwo)
				\bar g_{ij}
				\Bigl(
					\phantom+
					&         S\indices{^i_{\underline a_2b_1           b_2}}S\indices{^j_{\underline c_2d_1\underline d_2}}
					 +\tfrac12S\indices{^i_{\underline c_2b_1           b_2}}S\indices{^j_{\underline d_2d_1\underline a_2}}\\+\,
					&         S\indices{^i_{\underline c_2b_1\underline d_2}}S\indices{^j_{\underline a_2d_1           b_2}}
					 +\tfrac12S\indices{^i_{\underline d_2b_1\underline a_2}}S\indices{^j_{\underline c_2d_1           b_2}}
				\Bigr)
				\fullstop
			\end{split}
		\end{equation}
		Then rename $i,j$ as $j,i$ in the last two terms:
		\begin{equation}
			\label{eq:four}
			\begin{split}
				\eqref{eq:three}
				=
				\young(\btwo\bone\done)
				\young(\ctwo,\dtwo,\atwo)
				\bar g_{ij}
				\Bigl(
					\phantom+
					&         S\indices{^i_{\underline a_2b_1b_2}}S\indices{^j_{\underline c_2d_1\underline d_2}}
					 +\tfrac12S\indices{^i_{\underline c_2b_1b_2}}S\indices{^j_{\underline d_2d_1\underline a_2}}\\+\,
					&         S\indices{^i_{\underline a_2d_1b_2}}S\indices{^j_{\underline c_2b_1\underline d_2}}
					 +\tfrac12S\indices{^i_{\underline c_2d_1b_2}}S\indices{^j_{\underline d_2b_1\underline a_2}}
				\Bigr)
				\fullstop
			\end{split}
		\end{equation}
		Finally use the symmetrisation in $b_2,b_1,d_1$ and the antisymmetrisation in $c_2,d_2,a_2$ to bring each term to the
		same form:
		\begin{equation}
			\label{eq:five}
			\eqref{eq:four}
			=
			\young(\btwo\bone\done)
			\young(\ctwo,\dtwo,\atwo)
			\bar g_{ij}
			\Bigl(
				3S\indices{^i_{\underline a_2b_1b_2}}S\indices{^j_{\underline c_2d_1\underline d_2}}
			\Bigr)
			\fullstop
		\end{equation}
	\end{subequations}
	This proves \eqref{eq:KS:d} $\Leftrightarrow$ \eqref{eq:KS:b}.  To continue, antisymmetrise 
	\begin{align*}
		0=
		\young(\ctwo,\dtwo,\atwo)
		\young(\btwo\bone\done)
		\bar g_{ij}&S\indices{^i_{\underline a_2b_1b_2}}S\indices{^j_{\underline c_2d_1\underline d_2}}\\
		=
		2\young(\ctwo,\dtwo,\atwo)
		\bar g_{ij}
		\Bigl(
			&S\indices{^i_{\underline a_2b_1b_2}}S\indices{^j_{\underline c_2d_1\underline d_2}}+
			 S\indices{^i_{\underline a_2b_2d_1}}S\indices{^j_{\underline c_2b_1\underline d_2}}+
			 S\indices{^i_{\underline a_2d_1b_1}}S\indices{^j_{\underline c_2b_2\underline d_2}}
		\Bigr)
	\end{align*}
	in $a_2,b_2,c_2,d_2$.  Then the last term vanishes by the symmetry \eqref{eq:S:sym}, yielding
	\[
		0=
		\young(\atwo,\btwo,\ctwo,\dtwo)
		\bar g_{ij}
		\Bigl(
			S\indices{^i_{\underline a_2b_1\underline b_2}}S\indices{^j_{\underline c_2d_1\underline d_2}}+
			S\indices{^i_{\underline a_2d_1\underline b_2}}S\indices{^j_{\underline c_2b_1\underline d_2}}
		\Bigr)
		\fullstop
	\]
	Both sum terms are equal under antisymmetrisation in $a_2,b_2,c_2,d_2$ and contraction with $\bar g_{ij}$.  Indeed,
	exchanging $b_1$ and $d_1$ is tantamount to exchanging $a_2$ with $c_2$ and $b_2$ with $d_2$ and renaming $i,j$ as $j,i$.
	This proves \eqref{eq:KS:b} $\Rightarrow$ \eqref{eq:KS:c}.

	From the correspondence \eqref{eq:R<->S} between $R$ and $S$ we conclude the equivalence \eqref{eq:KS:c} $\Leftrightarrow$
	\eqref{eq:KR:c}.  The proof of the remaining part of \ref{it:KR} is completely analogous to the proof of \ref{it:KS}, so we
	leave it to the reader.  Condition \eqref{eq:Omega} is just a reformulation of \eqref{eq:KR:c}.  This finishes the proof.
\end{proof}
\begin{remark}
	\label{rem:shuffle}
	In the preceeding proof we made use of a particular notation as well as some particular tensor index manipulations.  We will
	do this several times in more complex computations during the next two sections, so we would like to make this explicit.
	\begin{itemize}
		\item
			First, we call a \name{Young} symmetriser as in \eqref{eq:one}, which is the product of a symmetriser and an
			antisymmetriser sharing a common label (and thus not commuting) a \dfn{hook symmetriser}.  Note that \eqref{eq:one}
			and \eqref{eq:two} are merely different ways to write down the same term, using a smaller antisymmetriser but
			applied to more terms.  We call this to \dfn{reduce an antisymmetriser by a label} ($b_2$ in this case).  This works
			likewise for a symmetriser and allows us to replace any hook symmetriser by a product of a symmetriser and an
			antisymmetriser with disjoint label sets (and thus both commuting).  The latter are more easy to deal with.  We call
			this procedure \emph{splitting a hook symmetriser}.
		\item
			Second, for better readability we stick to the above notation and underline antisymmetrised tensor indices as in
			\eqref{eq:three}.
		\item
			Third, regard the manipulations from \eqref{eq:three} to \eqref{eq:five}.  What we did is to bring the indices of
			every term in \eqref{eq:three} to the same order as in $g_{ij}S\indices{^i_{a_2b_1b_2}}S\indices{^j_{c_2d_1d_2}}$ by
			using:
			\begin{itemize}
				\item the symmetry in $i,j$ under contraction with $\bar g_{ij}$,
				\item the (anti)symmetry under the (anti)symmetriser and
				\item the symmetries of $S$ itself, especially the symmetrised \name{Bianchi} identity \eqref{eq:Bianchi:sym}.
			\end{itemize}
		We will call this procedure \emph{reordering indices}.
	\end{itemize}
\end{remark}

\subsection{The second integrability condition}

The proceeding for the remaining two integrability conditions is similar as for the first one, only longer.  We therefore treat
both in parallel as far as possible and shorten the explications where they are analogous.  We begin by substituting the
expressions \eqref{eq:Nijenhuis:nonanti} and \eqref{eq:K} into the tensors appearing in \eqref{eq:TNS:2} and \eqref{eq:TNS:3}:
\begin{align*}
	&\bar N\indices{^\delta_{\beta\gamma}}K_{\delta\alpha}\\
	&\quad=
	\bar g_{ij}
	\bigl(
		S\indices{^i_{a_2b_1b_2}}S\indices{^j_{c_2d_1d_2}}+
		S\indices{^i_{c_2b_1b_2}}S\indices{^j_{d_1a_2d_2}}
	\bigr)
	x^{b_1}x^{b_2}x^{d_1}\nabla^\delta x^{a_2}\nabla_\beta x^{c_2}\nabla_\gamma x^{d_2}
	\\&\qquad\qquad
	S_{e_1e_2f_1f_2}
	x^{e_1}x^{e_2}\nabla_\delta x^{f_1}\nabla_\alpha x^{f_2}
	\\
	&\bar N\indices{^\delta_{\beta\gamma}}K_{\varepsilon\alpha}K\indices{^\varepsilon_\delta}\\
	&\quad=
	\bar g_{ij}
	\bigl(
		S\indices{^i_{a_2b_1b_2}}S\indices{^j_{c_2d_1d_2}}+
		S\indices{^i_{c_2b_1b_2}}S\indices{^j_{d_1a_2d_2}}
	\bigr)
	x^{b_1}x^{b_2}x^{d_1}\nabla^\delta x^{a_2}\nabla_\beta x^{c_2}\nabla_\gamma x^{d_2}
	\\&\qquad\qquad
	S_{e_1e_2f_1f_2}
	x^{e_1}x^{e_2}\nabla_\varepsilon x^{f_1}\nabla_\alpha x^{f_2}
	\;
	S_{g_1g_2h_1h_2}
	x^{g_1}x^{g_2}\nabla^\varepsilon x^{h_1}\nabla_\delta x^{h_2}
	\fullstop
\end{align*}
As before, we replace the contractions over $\delta$ and $\varepsilon$ according to \linkref{lemma}{lemma:contraction}
and omit the terms that vanish according to the \name{Bianchi} identity:
\begin{align*}
	&\bar N\indices{^\delta_{\beta\gamma}}K_{\delta\alpha}\\
	&\quad=
	\bar g_{ij}\bar g^{a_2f_1}
	\bigl(
		S\indices{^i_{a_2b_1b_2}}S\indices{^j_{c_2d_1d_2}}+
		S\indices{^i_{c_2b_1b_2}}S\indices{^j_{d_1a_2d_2}}
	\bigr)
	S_{e_1e_2f_1f_2}
	\\&\qquad\qquad
	x^{b_1}x^{b_2}x^{d_1}x^{e_1}x^{e_2}
	\nabla_\beta x^{c_2}\nabla_\gamma x^{d_2}\nabla_\alpha x^{f_2}
	\\
	&\bar N\indices{^\delta_{\beta\gamma}}K_{\varepsilon\alpha}K\indices{^\varepsilon_\delta}\\
	&\quad=
	\bar g_{ij}\bar g^{a_2h_2}\bar g^{f_1h_1}
	\bigl(
		S\indices{^i_{a_2b_1b_2}}S\indices{^j_{c_2d_1d_2}}+
		S\indices{^i_{c_2b_1b_2}}S\indices{^j_{d_1a_2d_2}}
	\bigr)
	S_{e_1e_2f_1f_2}
	S_{g_1g_2h_1h_2}
	\\&\qquad\qquad
	x^{b_1}x^{b_2}x^{d_1}x^{e_1}x^{e_2}x^{g_1}x^{g_2}
	\nabla_\beta x^{c_2}\nabla_\gamma x^{d_2}\nabla_\alpha x^{f_2}
	\fullstop
\end{align*}
The integrability conditions \eqref{eq:TNS:2} and \eqref{eq:TNS:3} are equivalent to the vanishing of the antisymmetrisation of
the above tensors in $\alpha,\beta,\gamma$.  As before, this can be written as
\begin{equation}
	\label{eq:23:paired}
	\begin{array}{r@{}c@{}c@{\;\,}l}
		\bar g_{ij}\bar g_{kl}&
		\multicolumn{1}{@{}l}{
			\bigl(
				S\indices{^{ik}_{b_1b_2}}S\indices{^j_{c_2d_1d_2}}+
				S\indices{^i_{c_2b_1b_2}}S\indices{^j_{d_1}^k_{d_2}}
			\bigr)
			S\indices{^l_{f_2e_1e_2}}
		}
		\\[\medskipamount]&
		\multicolumn{1}{r}{
			x^{b_1}x^{b_2}x^{d_1}x^{e_1}x^{e_2}
			u^{[c_2}v^{d_2}w^{f_2]}
		}
		&=&0
		\\[\medskipamount]
		\bar g_{ij}\bar g_{kl}\bar g_{mn}&
		\multicolumn{1}{@{}l}{
			\bigl(
				S\indices{^{ik}_{b_1b_2}}S\indices{^j_{c_2d_1d_2}}+
				S\indices{^i_{c_2b_1b_2}}S\indices{^j_{d_1}^k_{d_2}}
			\bigr)
			S\indices{^m_{f_2e_1e_2}}
			S\indices{^{nl}_{g_1g_2}}
			\qquad
		}
		\\[\medskipamount]&
		\multicolumn{1}{r}{
			x^{b_1}x^{b_2}x^{d_1}x^{e_1}x^{e_2}x^{g_1}x^{g_2}
			u^{[c_2}v^{d_2}w^{f_2]}
		}
		&=&0
		\\
		\forall x\in M\quad&
		\multicolumn{1}{l}{
			\forall u,v,w\in T_xM
			\fullstop
		}
	\end{array}
\end{equation}
The tensors
\begin{align*}
	x^{b_1}x^{b_2}x^{d_1}x^{e_1}x^{e_2}              &u^{[c_2}v^{d_2}w^{f_2]}&
	x^{b_1}x^{b_2}x^{d_1}x^{e_1}x^{e_2}x^{g_1}x^{g_2}&u^{[c_2}v^{d_2}w^{f_2]}
\end{align*}
are antisymmetric in $c_2,d_2,f_2$ and symmetric in the remaining indices.  We decompose them according to
\linkref{lemma}{lem:hooks}.  This yields
\begin{align*}
	&x^{b_1}x^{b_2}x^{d_1}x^{e_1}x^{e_2}              u^{[c_2}v^{d_2}w^{f_2]}\\
	&\quad       =\text{constant}\cdot{\young(     \btwo\bone\done\eone\etwo          ,\ctwo,\dtwo,\ftwo)}          x^{b_1}x^{b_2}x^{d_1}x^{e_1}x^{e_2}              u^{c_2}v^{d_2}w^{f_2}\\
	&\qquad\qquad+\text{constant}\cdot{\young(\ctwo\btwo\bone\done\eone\etwo                ,\dtwo,\ftwo)}^\adjoint x^{b_1}x^{b_2}x^{d_1}x^{e_1}x^{e_2}              u^{c_2}v^{d_2}w^{f_2}
	\intertext{and}
	&x^{b_1}x^{b_2}x^{d_1}x^{e_1}x^{e_2}x^{g_1}x^{g_2}u^{[c_2}v^{d_2}w^{f_2]}\\
	&\quad       =\text{constant}\cdot{\young(     \btwo\bone\done\eone\etwo\gone\gtwo,\ctwo,\dtwo,\ftwo)}          x^{b_1}x^{b_2}x^{d_1}x^{e_1}x^{e_2}x^{g_1}x^{g_2}u^{c_2}v^{d_2}w^{f_2}\\
	&\qquad\qquad+\text{constant}\cdot{\young(\ctwo\btwo\bone\done\eone\etwo\gone\gtwo      ,\dtwo,\ftwo)}^\adjoint x^{b_1}x^{b_2}x^{d_1}x^{e_1}x^{e_2}x^{g_1}x^{g_2}u^{c_2}v^{d_2}w^{f_2}
	\fullstop
\end{align*}
The following lemma shows that, when substituted into \eqref{eq:23:paired}, only the first term is relevant in each case:
\begin{lemma}
	\label{lem:hook=0:23}
	\begin{subequations}
		\label{eq:hooks=0}
		\begin{alignat}{4}
			\label{eq:hook=0:2:yin}
			\young(\ctwo\btwo\bone\done\eone\etwo,\dtwo,\ftwo)
			&\bar g_{ij}\bar g_{kl}
			&S\indices{^{ik}_{b_1b_2}}
			&S\indices{^j_{c_2d_1d_2}}
			&S\indices{^l_{f_2e_1e_2}}
			&=0\\
			\label{eq:hook=0:2:yang}
			\young(\ctwo\btwo\bone\done\eone\etwo,\dtwo,\ftwo)
			&\bar g_{ij}\bar g_{kl}
			&S\indices{^i_{c_2b_1b_2}}
			&S\indices{^j_{d_1}^k_{d_2}}
			&S\indices{^l_{f_2e_1e_2}}
			&=0
		\end{alignat}
	\end{subequations}
	\begin{subequations}
		\label{eq:hook=0:3}
		\begin{alignat}{4}
			\label{eq:hook=0:3:yin}
			\young(\ctwo\btwo\bone\done\eone\etwo\gone\gtwo,\dtwo,\ftwo)
			&\bar g_{ij}\bar g_{kl}\bar g_{mn}
			&S\indices{^{ik}_{b_1b_2}}
			&S\indices{^j_{c_2d_1d_2}}
			&S\indices{^m_{f_2e_1e_2}}
			&S\indices{^{nl}_{g_1g_2}}
			&=0\\
			\label{eq:hook=0:3:yang}
			\young(\ctwo\btwo\bone\done\eone\etwo\gone\gtwo,\dtwo,\ftwo)
			&\bar g_{ij}\bar g_{kl}\bar g_{mn}
			&S\indices{^i_{c_2b_1b_2}}
			&S\indices{^j_{d_1}^k_{d_2}}
			&S\indices{^m_{f_2e_1e_2}}
			&S\indices{^{nl}_{g_1g_2}}
			&=0
		\end{alignat}
	\end{subequations}
\end{lemma}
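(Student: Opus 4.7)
The plan is to prove all four identities by the same strategy used for the earlier hook-vanishing lemma \eqref{eq:hook=0}. For each equation, I would split the hook \name{Young} symmetriser into the commuting product of its row symmetriser and its column antisymmetriser, expand the antisymmetrisation over $c_2, d_2, f_2$ explicitly into a sum of six signed terms, and then reorder indices in each term (in the sense of \linkref{remark}{rem:shuffle}) until pairs cancel or the full \name{Bianchi} identity \eqref{eq:Bianchi:alt} forces vanishing under the outer row symmetrisation.

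The essential tools for the reordering are the same three as in \eqref{eq:hook=0}: the symmetry $\bar g_{ij}=\bar g_{ji}$ of the contracting metric (applied also to $\bar g_{kl}$ and, in the triple-contraction case, to $\bar g_{mn}$), the symmetries \eqref{eq:S:sym} and \eqref{eq:S:pair} of the symmetrised algebraic curvature tensor, and most crucially the symmetrised \name{Bianchi} identity \eqref{eq:Bianchi:sym}, which lets me move an index between the second and fourth lower slots of an $S$-factor at the cost of a factor $-2$.

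Two complications distinguish this lemma from \eqref{eq:hook=0}. First, there is one (respectively two) additional $S$-factor carrying the indices $e_1,e_2$ (and $g_1,g_2$), all of which lie inside the outer row symmetriser and are therefore interchangeable with the other symmetrised indices; this factor is essentially passive and merely accompanies the manipulations, its extra upper index being contracted through $\bar g$. Second, in the ``yang'' equations \eqref{eq:hook=0:2:yang} and \eqref{eq:hook=0:3:yang} the index $c_2$ occupies the first lower slot of an $S$-factor rather than the second, so I would start with a preliminary application of the symmetrised \name{Bianchi} identity to bring $c_2$ into the same position it holds in the ``yin'' equations \eqref{eq:hook=0:2:yin} and \eqref{eq:hook=0:3:yin}, after which the argument proceeds in parallel.

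The main obstacle is combinatorial bookkeeping: six signed permutations in the antisymmetrisation, multiplied by the several possible \name{Bianchi} rewrites and by the additional $S$-factors, produce a nontrivial number of monomials to track. Once one observes, however, that after the outer row symmetriser acts the non-antisymmetrised row indices $b_1,b_2,d_1,e_1,e_2$ (and $g_1,g_2$ in the triple case) all become interchangeable, the surviving monomials group into a small number of cancelling pairs, matched by relabelling $i\leftrightarrow j$, $k\leftrightarrow l$ or $m\leftrightarrow n$ under the corresponding symmetric $\bar g$ — exactly as in the proof of \eqref{eq:hook=0}, with the `outer' $S$-factors being carried along unchanged.
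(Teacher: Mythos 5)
Your strategy coincides with the paper's proof of this lemma: expand the column antisymmetriser over $c_2,d_2,f_2$ into six signed terms, observe that every term in which some $S$-factor ends up with all three lower indices inside the row symmetriser dies by the \name{Bianchi} identity, and cancel the survivors in pairs by relabelling the contracted upper indices under the symmetric $\bar g$'s together with the symmetrised \name{Bianchi} identity, the extra $S$-factors being carried along passively exactly as you describe. One detail in your plan is inaccurate: the ``yin'' and ``yang'' versions do not differ in the slot occupied by $c_2$ (it sits in the first lower slot of an $S$-factor in both), but in which factors carry the doubly contracted upper indices, so the proposed preliminary \name{Bianchi} shuffle to ``move $c_2$'' is neither needed nor meaningful --- the direct expand-and-cancel treatment applies to the yang equations as written and is in fact shorter, since four of the six terms (those placing $c_2$ on a factor of the form $S\indices{^i_{\cdot b_1b_2}}$ or $S\indices{^l_{\cdot e_1e_2}}$) die immediately by the \name{Bianchi} identity, leaving a single cancelling pair.
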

\begin{proof}
	Expanding the antisymmetriser of the \name{Young} symmetriser on the left hand side of \eqref{eq:hook=0:2:yin} yields
	\begin{alignat*}{4}
		\young(\ctwo\btwo\bone\done\eone\etwo)
		\bar g_{ij}\bar g_{kl}S\indices{^{ik}_{b_1b_2}}
		\bigl(
			& &S\indices{^j_{c_2d_1d_2}}&S\indices{^l_{f_2e_1e_2}}&-&S\indices{^j_{c_2d_1f_2}}&S\indices{^l_{d_2e_1e_2}}&\\
			&+&S\indices{^j_{f_2d_1c_2}}&S\indices{^l_{d_2e_1e_2}}&-&S\indices{^j_{d_2d_1c_2}}&S\indices{^l_{f_2e_1e_2}}&\\
			&+&S\indices{^j_{d_2d_1f_2}}&S\indices{^l_{c_2e_1e_2}}&-&S\indices{^j_{f_2d_1d_2}}&S\indices{^l_{c_2e_1e_2}}&
		\bigr)
		\fullstop
	\end{alignat*}
	Now regard the parenthesis under complete symmetrisation in $b_1,b_2,c_2,d_1,e_1,e_2$.  The last two terms vanish by the
	\name{Bianchi} identity.  Renaming $i,j,k,l$ as $k,l,i,j$ in the third term shows that it cancels the fourth due to the
	contraction with  $\bar g_{ij}\bar g_{kl}S\indices{^{ik}_{b_1b_2}}$.  That the first two also cancel each other can be seen
	after applying twice the symmetrised \name{Bianchi} identity, once to $S\indices{^j_{c_2d_1d_2}}$ and once to
	$S\indices{^l_{f_2e_1e_2}}$.

	In the same way, the left hand side of \eqref{eq:hook=0:2:yang}, written without terms vanishing by the \name{Bianchi}
	identity, is
	\[
		\young(\ctwo\btwo\bone\done\eone\etwo)
		\bar g_{ij}\bar g_{kl}S\indices{^j_{d_1}^k_{c_2}}
		\bigl(
			 S\indices{^i_{d_2b_1b_2}}S\indices{^l_{f_2e_1e_2}}
			-S\indices{^i_{f_2b_1b_2}}S\indices{^l_{d_2e_1e_2}}
		\bigr)
		\fullstop
	\]
	Renaming $i,j,k,l$ as $l,k,j,i$ in the first term shows that this is zero too.  The proof of \eqref{eq:hook=0:3} is
	straightforward, using the same arguments.  We leave this to the reader.
\end{proof}
\begin{remark}
	For the unit sphere, the lemma also follows from the symmetry classification of \name{Riemann} tensor polynomials
	\cite{Fulling&King&Wybourne&Cummins}.  Indeed, the tensors under the \name{Young} symmetriser in \eqref{eq:hooks=0} and
	\eqref{eq:hook=0:3} can be expressed in terms of the corresponding algebraic curvature tensor via \eqref{eq:R->S} and the
	resulting tensors have no $\textYF\yng(5,1,1)$ respectively $\textYF\yng(7,1,1)$ component.	
\end{remark}
We have shown the equivalence of the second and third integrability condition to
\[
	\begin{array}{r@{}c@{}c@{\;\,}l}
		\bar g_{ij}\bar g_{kl}&
		\multicolumn{1}{@{}l}{
			\bigl(
				S\indices{^{ik}_{b_1b_2}}S\indices{^j_{c_2d_1d_2}}+
				S\indices{^i_{c_2b_1b_2}}S\indices{^j_{d_1}^k_{d_2}}
			\bigr)
			S\indices{^l_{f_2e_1e_2}}
		}
		\\[\medskipamount]&
		\multicolumn{1}{r}{
		\young(\btwo\bone\done\eone\etwo,\ctwo,\dtwo,\ftwo)
			x^{b_1}x^{b_2}x^{d_1}x^{e_1}x^{e_2}
			u^{c_2}v^{d_2}w^{f_2}
		}
		&=&0
		\\[\medskipamount]
		\bar g_{ij}\bar g_{kl}\bar g_{mn}&
		\multicolumn{1}{@{}l}{
			\bigl(
				S\indices{^{ik}_{b_1b_2}}S\indices{^j_{c_2d_1d_2}}+
				S\indices{^i_{c_2b_1b_2}}S\indices{^j_{d_1}^k_{d_2}}
			\bigr)
			S\indices{^m_{f_2e_1e_2}}
			S\indices{^{nl}_{g_1g_2}}
			\qquad
		}
		\\[\medskipamount]&
		\multicolumn{1}{r}{
			\young(\btwo\bone\done\eone\etwo\gone\gtwo,\ctwo,\dtwo,\ftwo)
			x^{b_1}x^{b_2}x^{d_1}x^{e_1}x^{e_2}x^{g_1}x^{g_2}
			u^{c_2}v^{d_2}w^{f_2}
		}
		&=&0
		\\&
		\multicolumn{1}{r}{
			\forall x\in M
			\quad
			\forall u,v,w\in T_xM
			\quad
		}
	\end{array}
\]
respectively.  As before, the restrictions $\forall u,v,w\in T_xM$ and $\forall x\in M$ can be dropped and this allows us to
write both conditions independently of the vectors $x,u,v,w$ as
\begin{alignat}{3}
	\label{eq:2:alg}
	{\young(\btwo\bone\done\eone\etwo,\ctwo,\dtwo,\ftwo)}^\adjoint
	\bar g_{ij}
	\bar g_{kl}&
	\bigl(
		S\indices{^{ik}_{b_1b_2}}S\indices{^j_{c_2d_1d_2}}+&
		S\indices{^i_{c_2b_1b_2}}S\indices{^j_{d_1}^k_{d_2}}
	\bigr)&
	S\indices{^l_{f_2e_1e_2}}
	=0\\
	\label{eq:3:alg}
	{\young(\btwo\bone\done\eone\etwo\gone\gtwo,\ctwo,\dtwo,\ftwo)}^\adjoint
	\bar g_{ij}
	\bar g_{kl}
	\bar g_{mn}&
	\bigl(
		S\indices{^{ik}_{b_1b_2}}S\indices{^j_{c_2d_1d_2}}+&
		S\indices{^i_{c_2b_1b_2}}S\indices{^j_{d_1}^k_{d_2}}
	\bigr)&
	S\indices{^m_{f_2e_1e_2}}
	S\indices{^{nl}_{g_1g_2}}
	=0
	\fullstop
\end{alignat}
In order to simplify these conditions we need the following two lemmas.
\begin{lemma}
	\label{lem:equivalent}
	The first integrability condition is equivalent to
	\begin{equation}
		\label{eq:equivalent}
		\young(\btwo,\ctwo,\dtwo)
		\young(\bone\done)
		\bar g_{ij}
		\bigl(
			  S\indices{^i_{           b_1}^k_{\underline b_2}}
			+2S\indices{^i_{\underline b_2}^k_{           b_1}}
		\bigr)
		S\indices{^j_{\underline c_2d_1\underline d_2}}
		=0
		\fullstop
	\end{equation}
\end{lemma}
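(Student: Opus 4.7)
The plan is to derive \eqref{eq:equivalent} by reducing the four-fold antisymmetrisation $\young(\atwo,\btwo,\ctwo,\dtwo)$ appearing in the first integrability condition, as established in the preceding proposition in its $S$-version analogous to \eqref{eq:KR:c}, namely
\[
\young(\atwo,\btwo,\ctwo,\dtwo)\;\bar g_{ij}\,S\indices{^i_{\underline a_2 b_1\underline b_2}}\,S\indices{^j_{\underline c_2 d_1\underline d_2}}=0,
\]
down to a three-fold antisymmetrisation in $b_2,c_2,d_2$, with the redundancy freed by eliminating $a_2$ being absorbed into the linear combination inside the parenthesis of \eqref{eq:equivalent}. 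Since the Young symmetrisers involved are projectors, it will suffice to exhibit such an explicit reduction to establish the equivalence.

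First, I would apply the \emph{reduction by a label} technique of Remark \ref{rem:shuffle}, singling out $a_2$. This rewrites the four-fold antisymmetriser as a sum of four sign-weighted summands, each carrying only the three-fold antisymmetriser $\young(\btwo,\ctwo,\dtwo)$, corresponding to $a_2$ sitting in its original slot or being cyclically exchanged with $b_2$, $c_2$, respectively $d_2$. This is the direct analogue of the manipulation from \eqref{eq:one} to \eqref{eq:two} in the previous subsection.

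Second, in the three exchange terms the displaced $a_2$ label ends up in a non-canonical slot on the second $S$-factor. I would use the symmetrised \name{Bianchi} identity \eqref{eq:Bianchi:sym}, together with the elementary symmetries \eqref{eq:S:sym} and \eqref{eq:S:pair} and the pair-swap $i\leftrightarrow j$ (which is innocuous by the symmetry of $\bar g_{ij}$), to transport the displaced $a_2$ back to a canonical slot on the first $S$-factor. This is exactly the kind of bookkeeping carried out in the passage \eqref{eq:two}$\to$\eqref{eq:three}$\to$\eqref{eq:four}$\to$\eqref{eq:five}, where the factor $\tfrac12$ from \eqref{eq:Bianchi:sym} combines with the counting of exchange terms to produce the coefficients $1$ and $2$ appearing inside the parenthesis of \eqref{eq:equivalent}. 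The appended symmetriser $\young(\bone\done)$ is then immediate: interchanging $b_1$ with $d_1$ amounts to swapping the two $S$-factors and relabelling $i\leftrightarrow j$, so it acts trivially and can be inserted at no cost.

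The main obstacle I anticipate is the coefficient tracking in the second step: one must verify that after transporting every displaced $a_2$ via \eqref{eq:Bianchi:sym} and exploiting the symmetries, the four summands regroup precisely into the combination $S\indices{^i_{b_1}{}^k_{\underline b_2}}+2\,S\indices{^i_{\underline b_2}{}^k_{b_1}}$ with coefficient ratio $1{:}2$, with no spurious residual terms and with the $\young(\btwo,\ctwo,\dtwo)\,\young(\bone\done)$ projector correctly accounted for. Once that is settled, everything else is a mechanical application of the notational conventions of Remark \ref{rem:shuffle}.
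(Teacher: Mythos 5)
Your route is essentially the paper's: start from the first integrability condition in the fully antisymmetrised form \eqref{eq:KS:c}, reduce the antisymmetriser by the label $a_2$ exactly as in the passage from \eqref{eq:one} to \eqref{eq:two}, insert the symmetriser in $b_1,d_1$, and use the \name{Bianchi} identity together with the symmetries of $S$ and the relabelling $i\leftrightarrow j$ to regroup the displaced terms into the combination $S\indices{^i_{b_1}^k_{\underline b_2}}+2S\indices{^i_{\underline b_2}^k_{b_1}}$. A cosmetic difference: the paper first collapses the four reduced terms to two by the $b_1,d_1$ symmetrisation (first$=$third, second$=$fourth), arriving at \eqref{eq:intermediate}, and only then produces the $1{:}2$ ratio by a single application of the plain cyclic \name{Bianchi} identity to $S\indices{^i_{a_2b_1\underline b_2}}$; the ratio does not come from the factor $\tfrac12$ in \eqref{eq:Bianchi:sym} as you suggest, but the arithmetic works out the same way.

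The genuine gap is in the claimed \emph{equivalence}. Your justification --- that the \name{Young} symmetrisers involved are projectors, so exhibiting the reduction suffices --- only yields the implication from the first integrability condition to \eqref{eq:equivalent}: post-composing a tensor identity $T=0$ with a projector $P$ gives $PT=0$, which is a priori strictly weaker, and your chain applies exactly such a projection when it symmetrises in $b_1,d_1$ and trades the four-index antisymmetriser for a three-index one. To close the converse you must either observe that the fully antisymmetrised expression is already invariant under $b_1\leftrightarrow d_1$ (swapping $b_1$ with $d_1$ amounts to swapping the two $S$-factors, relabelling $i\leftrightarrow j$ and applying the even permutation exchanging $a_2$ with $c_2$ and $b_2$ with $d_2$, so the symmetrisation loses nothing), or do what the paper does: symmetrise \eqref{eq:intermediate} further in $a_2,b_1,d_1$, apply the symmetrised \name{Bianchi} identity, and recognise the result as the first integrability condition in the form \eqref{eq:KS:b}, which the preceding proposition has shown to be equivalent to \eqref{eq:KS:c}. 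Either argument is short, but one of them has to be supplied; as written, your proposal proves only one direction of the lemma.
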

\begin{proof}
	Take the first integrability condition in the form \eqref{eq:KS:c} and reduce the antisymmetriser by the label $a_2$:
	\begin{align*}
		\smash[b]{\young(\btwo,\ctwo,\dtwo)}
		\bar g_{ij}
		\bigl(
			\phantom
			+S\indices{^i_{           a_2b_1\underline b_2}}S\indices{^j_{\underline c_2d_1\underline d_2}}&
			-S\indices{^i_{\underline b_2b_1\underline c_2}}S\indices{^j_{\underline d_2d_1           a_2}}\\
			+S\indices{^i_{\underline c_2b_1\underline d_2}}S\indices{^j_{           a_2d_1\underline b_2}}&
			-S\indices{^i_{\underline d_2b_1           a_2}}S\indices{^j_{\underline b_2d_1\underline c_2}}
		\bigr)
		=0
		\fullstop
	\end{align*}
	If we symmetrise this expression in $b_1,d_1$, the first and third as well as the second and fourth term become equal.
	Permuting indices, we get
	\begin{equation}
		\label{eq:intermediate}
		\young(\btwo,\ctwo,\dtwo)
		\young(\bone\done)
		\bar g_{ij}
		\bigl(
			S\indices{^i_{a_2b_1\underline b_2}}
			-S\indices{^i_{\underline b_2a_2b_1}}
		\bigr)
		S\indices{^j_{\underline c_2d_1\underline d_2}}
		=0
		\fullstop
	\end{equation}
	If we now symmetrise in $a_2,b_1,d_1$ and apply the symmetrised \name{Bianchi} identity to $S\indices{^i_{a_2b_1\underline
	b_2}}$, we get back the first integrability condition in the form \eqref{eq:KS:b}.  This proves its equivalence to
	\eqref{eq:intermediate}.  Applying now the \name{Bianchi} identity to the first term in \eqref{eq:intermediate} yields
	\eqref{eq:equivalent} with the index $k$ lowered and renamed as $a_2$.  
\end{proof}
\begin{lemma}
	\label{lem:consequence}
	The following identity is a consequence of the first integrability condition:
	\begin{equation}
		\label{eq:consequence}
		\young(\ctwo,\dtwo)
		\young(\btwo\bone\done)
		\bar g_{ij}
		\bigl(
					 S\indices{^i_{b_1}^k_{           b_2}}S\indices{^j_{\underline c_2d_1\underline d_2}}
			-\tfrac12S\indices{^i_{d_1}^k_{\underline d_2}}S\indices{^j_{\underline c_2b_1           b_2}}
			-        S\indices{^i_{\underline d_2}^k_{d_1}}S\indices{^j_{\underline c_2b_1           b_2}}
		\bigr)
		=0
		\fullstop
	\end{equation}
\end{lemma}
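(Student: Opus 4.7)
The plan is to derive \eqref{eq:consequence} as a symmetrisation of the equivalent form \eqref{eq:equivalent} of the first integrability condition provided by Lemma~\ref{lem:equivalent}.  First I apply the symmetriser $\young(\btwo\bone\done)$ to equation \eqref{eq:equivalent}.  Since $\young(\bone\done)$ is absorbed into the larger $\young(\btwo\bone\done)$ up to a nonzero scalar, and since symmetrising the zero tensor yields zero, this produces
$$
\young(\btwo\bone\done)\,\young(\btwo,\ctwo,\dtwo)\,
\bar g_{ij}\bigl(S\indices{^i_{b_1}^k_{\underline b_2}}+2S\indices{^i_{\underline b_2}^k_{b_1}}\bigr)S\indices{^j_{\underline c_2 d_1\underline d_2}}=0.
$$
The operator on the left is a hook symmetriser: the symmetriser and antisymmetriser share the common label $b_2$.

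Next I split this hook following the procedure of Remark~\ref{rem:shuffle}, reducing the antisymmetrisation $\young(\btwo,\ctwo,\dtwo)$ by the label $b_2$.  This replaces the hook by the commuting product $\young(\btwo\bone\done)\,\young(\ctwo,\dtwo)$ acting on a sum of three groups of terms: one where $b_2$ remains in place, one where $b_2$ is swapped with $c_2$, and one where $b_2$ is swapped with $d_2$.  After this reduction the outer Young symmetriser becomes precisely the one appearing in \eqref{eq:consequence}.

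The final step is to reorganise the six resulting summands (three hook-contributions each multiplied by the two-term factor $S + 2S$) into the three canonical forms of \eqref{eq:consequence}.  For this I use the symmetrised Bianchi identity \eqref{eq:Bianchi:sym} to swap the order of the lower indices of $S\indices{^i_{\cdots}}$, the $i\leftrightarrow j$ symmetry of $\bar g_{ij}$ together with renaming of dummy indices, the symmetry \eqref{eq:S:sym} of $S$, and the fact that $c_2,d_2$ may be freely exchanged (with a sign) while $b_2,b_1,d_1$ may be freely exchanged (without sign).  Each of the six summands reduces under these operations (``reordering indices'' in the sense of Remark~\ref{rem:shuffle}) to one of the three tensor monomials $\bar g_{ij}S\indices{^i_{b_1}^k_{b_2}}S\indices{^j_{\underline c_2 d_1\underline d_2}}$, $\bar g_{ij}S\indices{^i_{d_1}^k_{\underline d_2}}S\indices{^j_{\underline c_2 b_1 b_2}}$, $\bar g_{ij}S\indices{^i_{\underline d_2}^k_{d_1}}S\indices{^j_{\underline c_2 b_1 b_2}}$.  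Collecting contributions then yields the required linear combination with coefficients $1$, $-\tfrac12$, $-1$.

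The principal obstacle is the bookkeeping: the hook split generates three sub-sums each consisting of two terms, and each of those has to be correctly reduced to one of the three canonical forms, with the right sign and weight.  The non-trivial coefficient $-\tfrac12$ (rather than $-1$) is the delicate part—it originates from the asymmetric weighting $1:2$ in the factor $S\indices{^i_{b_1}^k_{\underline b_2}}+2S\indices{^i_{\underline b_2}^k_{b_1}}$ combined with a single application of the symmetrised Bianchi identity, which carries the characteristic factor $-\tfrac12$ visible in \eqref{eq:Bianchi:sym}.  Once the arithmetic of these coefficients is verified, \eqref{eq:consequence} follows.
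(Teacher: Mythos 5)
Your proposal is correct and follows essentially the same route as the paper: starting from the reformulation \eqref{eq:equivalent} of the first integrability condition, extending the symmetrisation to $b_2,b_1,d_1$, reducing the antisymmetriser $\young(\btwo,\ctwo,\dtwo)$ by the label $b_2$, and then reordering indices with the symmetrised \name{Bianchi} identity and the symmetries of $S$ to collect the six summands into the three canonical monomials. Your diagnosis of where the coefficient $-\tfrac12$ comes from (the $1:2$ weighting combined with the factor $-\tfrac12$ in \eqref{eq:Bianchi:sym}) matches the paper's computation.
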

\begin{proof}
	Reduce the antisymmetriser in \eqref{eq:equivalent} by the index $b_2$,
	\begin{alignat*}{4}
		\smash[b]{\young(\ctwo,\dtwo)}
		\young(\bone\done)
		\bar g_{ij}&
		\bigl(
			& &&S\indices{^i_{b_1}^k_{           b_2}}S\indices{^j_{\underline c_2d_1\underline d_2}}&+&2&S\indices{^i_{           b_2}^k_{b_1}}S\indices{^j_{\underline c_2d_1\underline d_2}}\\&
			&+&&S\indices{^i_{b_1}^k_{\underline c_2}}S\indices{^j_{\underline d_2d_1           b_2}}&+&2&S\indices{^i_{\underline c_2}^k_{b_1}}S\indices{^j_{\underline d_2d_1           b_2}}\\&
			&+&&S\indices{^i_{b_1}^k_{\underline d_2}}S\indices{^j_{           b_2d_1\underline c_2}}&+&2&S\indices{^i_{\underline d_2}^k_{b_1}}S\indices{^j_{           b_2d_1\underline c_2}}
		\bigr)
		=0
		\comma
	\end{alignat*}
	and then symmetrise in $b_2,b_1,d_1$.  In the last line we can then apply the symmetrised \name{Bianchi} identity in order
	to move the antisymmetrised index $c_2$ from the fourth to the second position:
	\begin{alignat*}{4}
		\smash[b]{\young(\ctwo,\dtwo)}
		\young(\btwo\bone\done)
		\bar g_{ij}&
		\bigl(
			& &        &S\indices{^i_{b_1}^k_{           b_2}}S\indices{^j_{\underline c_2d_1\underline d_2}}&+&2&S\indices{^i_{           b_2}^k_{b_1}}S\indices{^j_{\underline c_2d_1\underline d_2}}\\&
			&-&        &S\indices{^i_{b_1}^k_{\underline d_2}}S\indices{^j_{\underline c_2d_1           b_2}}&-&2&S\indices{^i_{\underline d_2}^k_{b_1}}S\indices{^j_{\underline c_2d_1           b_2}}\\&
			&-&\tfrac12&S\indices{^i_{b_1}^k_{\underline d_2}}S\indices{^j_{\underline c_2d_1           b_2}}&-& &S\indices{^i_{\underline d_2}^k_{b_1}}S\indices{^j_{\underline c_2d_1           b_2}}
		\bigr)
		=0
		\fullstop
	\end{alignat*}
	After permuting indices under symmetrisation appropriately, we get the desired result.
\end{proof}
\begin{proposition}[Second integrability condition]
	\label{prop:KS2:4-4+}
	Suppose a \name{Killing} tensor on a constant sectional curvature manifold satisfies the first integrability condition
	\eqref{eq:TNS:1}.  Then the following conditions are equivalent to the second integrability condition \eqref{eq:TNS:2}:
	\begin{enumerate}
		\item
			\label{it:KS2:hook}
			The corresponding symmetrised algebraic curvature tensor $S$ satisfies one of the following two equivalent
			conditions:
			\begin{subequations}
				\label{eq:KS2:hook}
				\begin{alignat}{3}
					\label{eq:KS2:hook:yin}
					{\young(\btwo\bone\done\eone\etwo,\ctwo,\dtwo,\ftwo)}^\adjoint
					&\bar g_{ij}
					 \bar g_{kl}
					&S\indices{^i_{c_2d_1d_2}}
					&S\indices{^j_{b_1}^k_{b_2}}
					&S\indices{^l_{f_2e_1e_2}}
					&=0\\
					\label{eq:KS2:hook:yang}
					{\young(\btwo\bone\done\eone\etwo,\ctwo,\dtwo,\ftwo)}^\adjoint
					&\bar g_{ij}
					 \bar g_{kl}
					&S\indices{^i_{c_2b_1b_2}}
					&S\indices{^j_{d_1}^k_{d_2}}
					&S\indices{^l_{f_2e_1e_2}}
					&=0
					\fullstop
				\end{alignat}
			\end{subequations}
		\item
			\label{it:KS2:3-5+}
			The corresponding symmetrised algebraic curvature tensor $S$ satisfies one of the following two equivalent conditions:
			\begin{subequations}
				\label{eq:KS2:3-5+}
				\begin{align}
					\label{eq:KS2:3-5+:yin}
					\young(\ctwo,\dtwo,\ftwo)
					\young(\btwo\bone\done\eone\etwo)
					\bar g_{ij}
					\bar g_{kl}
					S\indices{^i_{\underline c_2d_1\underline d_2}}
					S\indices{^j_{b_1}^k_{b_2}}
					S\indices{^l_{\underline f_2e_1 e_2}}
					&=0\\
					\label{eq:KS2:3-5+:yang}
					\young(\ctwo,\dtwo,\ftwo)
					\young(\btwo\bone\done\eone\etwo)
					\bar g_{ij}
					\bar g_{kl}
					S\indices{^i_{\underline c_2b_1b_2}}
					S\indices{^j_{d_1}^k_{\underline d_2}}
					S\indices{^l_{\underline f_2e_1 e_2}}
					&=0
					\fullstop
				\end{align}
			\end{subequations}
		\item
			\label{it:KS2:4-4+}
			The corresponding symmetrised algebraic curvature tensor $S$ satisfies one of the following three equivalent
			conditions:
			\begin{subequations}
				\label{eq:KS2:4-4+}
				\begin{align}
					\label{eq:KS2:4-4+:left} &\bar g_{ij}\bar g_{kl}S\indices{^i_{\underline c_2d_1\underline d_2}}S\indices{^j_{           b_1}^k_{\underline b_2}}S\indices{^l_{\underline f_2e_1           e_2}}=0\\
					\label{eq:KS2:4-4+:right}\smash{\young(\btwo,\ctwo,\dtwo,\ftwo)\young(\bone\done\eone\etwo)\left\{\rule{0pt}{32pt}\right.}
					                         &\bar g_{ij}\bar g_{kl}S\indices{^i_{\underline c_2d_1\underline d_2}}S\indices{^j_{\underline b_2}^k_{           b_1}}S\indices{^l_{\underline f_2e_1           e_2}}=0\\
					\label{eq:KS2:4-4+:both} &\bar g_{ij}\bar g_{kl}S\indices{^i_{\underline c_2d_1\underline d_2}}S\indices{^j_{           e_1}^k_{           e_2}}S\indices{^l_{\underline f_2b_1\underline b_2}}=0
					\fullstop
				\end{align}
			\end{subequations}
		\item
			\label{it:KR:2}
			The corresponding algebraic curvature tensor $R$ satisfies
			\begin{equation}
				\label{eq:KR:2}
				\young(\btwo,\ctwo,\dtwo,\ftwo)
				\young(\bone\done\eone\etwo)
				\bar g_{ij}\bar g_{kl}
				R\indices{^i_{d_1\underline c_2\underline d_2}}
				R\indices{^j_{e_1}^k_{e_2}}
				R\indices{^l_{b_1\underline f_2\underline b_2}}
				=0
				\fullstop
			\end{equation}
	\end{enumerate}
\end{proposition}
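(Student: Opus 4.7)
The plan is to start from equation \eqref{eq:2:alg}, which the preceding subsection has already shown to be equivalent to the second integrability condition \eqref{eq:TNS:2}. I would then work through the four formulations in the order they are stated, establishing a chain of equivalences, and finally translate to the $R$-form via the isomorphism \eqref{eq:R<->S}.

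For part \ref{it:KS2:hook}: the two summands in the parenthesis of \eqref{eq:2:alg} coincide, up to the symmetries of $S$ and the renaming $i\leftrightarrow j,\,k\leftrightarrow k$ (using $\bar g_{ij}\bar g_{kl}$ symmetry), with the terms appearing in \eqref{eq:KS2:hook:yin} and \eqref{eq:KS2:hook:yang}. The nontrivial claim is that each of these by itself vanishes under the adjoint hook Young symmetriser. I would prove this by invoking \linkref{lemma}{lem:consequence} (which follows from the first integrability condition): after reducing the antisymmetriser by the label $b_2$, the consequence \eqref{eq:consequence} allows the two summands to be expressed in terms of each other, so that \eqref{eq:2:alg} forces each of them, separately, to vanish.

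For part \ref{it:KS2:3-5+}: this is exactly the splitting of the hook symmetriser in the sense of \linkref{remark}{rem:shuffle}. As in the derivation leading to \eqref{eq:penultimate} in the proof of the first integrability condition, the equivalence between the adjoint hook form and its split version $P^\adjoint P$ follows from the kernel identity \eqref{eq:kernels}. For part \ref{it:KS2:4-4+}: the strategy is to reduce the $5$-index symmetriser and $4$-index antisymmetriser in \eqref{eq:KS2:3-5+} by singling out one label on each side, and then use the first integrability condition together with \linkref{lemma}{lem:equivalent} and \linkref{lemma}{lem:consequence} to eliminate the residual terms, arriving at the compact form \eqref{eq:KS2:4-4+:left}. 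The equivalences between \eqref{eq:KS2:4-4+:left}, \eqref{eq:KS2:4-4+:right}, and \eqref{eq:KS2:4-4+:both} then follow by reordering indices in the sense of \linkref{remark}{rem:shuffle}: swapping $b_1\leftrightarrow d_1$ combined with a suitable relabelling of antisymmetrised indices and the $\bar g_{ij}\bar g_{kl}$-symmetry sends one form to another, with the Bianchi identity accounting for terms that drop out.

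Finally, for part \ref{it:KR:2} I would substitute \eqref{eq:S->R} into \eqref{eq:KS2:4-4+:both} (the form best suited to the translation), expand, and use the antisymmetries \eqref{eq:R:anti} together with the pair symmetry \eqref{eq:R:pair} under the outer symmetriser-antisymmetriser pair. The main obstacle is squarely in part \ref{it:KS2:4-4+}: the reduction from the $5$-plus-$3$ hook form to the $4$-plus-$3$ simplified form is delicate, because the cancellations rely nontrivially on \emph{both} the symmetrised Bianchi identity \eqref{eq:Bianchi:sym} and the first integrability condition in the guise of \linkref{lemma}{lem:consequence}; bookkeeping the indices so that each cancellation is visible — analogously to the chain \eqref{eq:one}--\eqref{eq:five} in the first integrability case — is where the bulk of the work lies.
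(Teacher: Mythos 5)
Your overall route coincides with the paper's: start from \eqref{eq:2:alg}, use \linkref{lemma}{lem:consequence} to decouple the two summands in part \ref{it:KS2:hook}, pass between the hook form \eqref{eq:KS2:hook} and the split form \eqref{eq:KS2:3-5+} via \linkref{lemma}{lem:hooks}, \eqref{eq:hooks=0} and \eqref{eq:kernels} (note that this step needs the orthogonal decomposition of \linkref{lemma}{lem:hooks} together with the vanishing of the long-row component from \linkref{lemma}{lem:hook=0:23}, not merely the label-reduction of \linkref{remark}{rem:shuffle}), and finish with \eqref{eq:R<->S}. The genuine gap is in part \ref{it:KS2:4-4+}: you assert that \eqref{eq:KS2:4-4+:left}, \eqref{eq:KS2:4-4+:right} and \eqref{eq:KS2:4-4+:both} are equivalent because the three tensors are carried into one another by reordering indices. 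They are not. The three middle factors $S\indices{^j_{b_1}^k_{b_2}}$, $S\indices{^j_{b_2}^k_{b_1}}$ and $S\indices{^j_{e_1}^k_{e_2}}$ distribute the symmetrised and antisymmetrised labels over the three $S$-factors in genuinely different patterns: in \eqref{eq:KS2:4-4+:both} the middle factor carries two symmetrised labels and each outer factor carries two antisymmetrised ones, while in \eqref{eq:KS2:4-4+:left} and \eqref{eq:KS2:4-4+:right} the middle factor carries one of each. Any attempted identification (pair symmetry on the middle factor plus renaming $i,j,k,l$) ends up exchanging the label triples $(c_2,d_1,d_2)$ and $(f_2,e_1,e_2)$, which mixes symmetrised with antisymmetrised labels and is therefore not absorbed by the outer symmetrisers. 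Worse, if the three tensors were equal up to reordering, then the relation $\eqref{eq:KS2:4-4+:left}+2\cdot\eqref{eq:KS2:4-4+:right}=0$, which follows from the \emph{first} integrability condition alone via \linkref{lemma}{lem:equivalent}, would force all three to vanish identically, and the second integrability condition would become a consequence of the first --- contradicting the content of the proposition.

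The correct argument, and the one the paper gives, treats the left-hand sides $L$, $R$, $B$ of \eqref{eq:KS2:4-4+} as unknowns of a linear system: splitting the hook symmetriser in \eqref{eq:KS2:hook:yin} at the label $b_2$ shows that the second integrability condition is equivalent to $L+R+2B=0$, while \linkref{lemma}{lem:equivalent} and \linkref{lemma}{lem:consequence} --- consequences of the first integrability condition only --- supply the independent relations $L+2R=0$ and $B-\tfrac12R-L=0$. The three equations together give $L=R=B=0$, and conversely any single one of $L=0$, $R=0$, $B=0$ combined with the two auxiliary relations recovers $L+R+2B=0$. So the mutual equivalence of the three forms in \eqref{eq:KS2:4-4+} is itself part of this linear-algebra step and requires the first integrability condition; it is not an index-shuffling identity. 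Your plan correctly identifies the two lemmas as the tools for reaching \eqref{eq:KS2:4-4+:left}, but the same system has to be invoked again for the equivalence of the three variants.
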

\begin{remark}
	To facilitate the reading of this and subsequent proofs, note that the names of symmetrised indices are completely
	irrelevant.
\end{remark}
\begin{proof}
	\ref{it:KS2:hook}
	Contract \eqref{eq:consequence} with $\bar g_{kl}S\indices{^l_{f_2e_1e_2}}$, antisymmetrise in $c_2,d_2,f_2$ and symmetrise
	in $b_2,b_1,d_1,e_1,e_2$.  This yields
	\begin{align*}
		\smash[b]{\young(\ctwo,\dtwo,\ftwo)}&
		\young(\btwo\bone\done\eone\etwo)
		\bar g_{ij}\bar g_{kl}
		\\
		&\qquad
		\bigl(
			         S\indices{^i_{b_1}^k_{           b_2}}S\indices{^j_{\underline c_2d_1\underline d_2}}
			-\tfrac12S\indices{^i_{d_1}^k_{\underline d_2}}S\indices{^j_{\underline c_2b_1           b_2}}
			-        S\indices{^i_{\underline d_2}^k_{d_1}}S\indices{^j_{\underline c_2b_1           b_2}}
		\bigr)
		S\indices{^l_{\underline f_2e_1e_2}}
		=0
		\fullstop
	\end{align*}
	Reordering indices shows that the third term differs from the second by a factor of minus two.  Indeed,	exchanging $d_1$ and
	$d_2$ in the third term is tantamount to exchanging the upper indices $i$ and $k$, due to the pair symmetry of
	$S\indices{^i_{d_2}^k_{d_1}}$.  But under contraction with $\bar g_{ij}\bar g_{kl}$ this is tantamount to exchanging the
	upper indices $j$ and $l$.  This in turn is tantamount to exchanging $c_2,b_1,b_2$ with $f_2,e_1,e_2$ which, under
	symmetrisation and antisymmetrisation, is tantamount to a sign change.  Therefore
	\begin{equation}
		\label{eq:3-5+:intermediate}
		\young(\ctwo,\dtwo,\ftwo)
		\young(\btwo\bone\done\eone\etwo)
		\bar g_{ij}\bar g_{kl}
		\bigl(
			         S\indices{^i_{b_1}^k_{           b_2}}S\indices{^j_{\underline c_2d_1\underline d_2}}
			+\tfrac12S\indices{^i_{d_1}^k_{\underline d_2}}S\indices{^j_{\underline c_2b_1           b_2}}
		\bigr)
		S\indices{^l_{\underline f_2e_1e_2}}
		=0
		\fullstop
	\end{equation}
	Applying the symmetrised \name{Bianchi} identity to $S\indices{^i_{b_1}^k_{b_2}}$ and antisymmetrising in $b_2,c_2,d_2,f_2$
	yields
	\[
		{\young(\btwo\bone\done\eone\etwo,\ctwo,\dtwo,\ftwo)}^\adjoint
		\bar g_{ij}\bar g_{kl}
		\bigl(
			S\indices{^{ik}_{b_1b_2}}S\indices{^j_{c_2d_1d_2}}-
			S\indices{^i_{d_1}^k_{d_2}}S\indices{^j_{c_2b_1b_2}}
		\bigr)
		S\indices{^l_{f_2e_1e_2}}
		=0
		\fullstop
	\]
	We have derived this identity from the first integrability condition via \linkref{lemma}{lem:consequence}.  Comparing it
	with condition \eqref{eq:2:alg} shows that \eqref{eq:2:alg} is equivalent to \eqref{eq:KS2:hook:yang} and, after using once
	again the symmetrised \name{Bianchi} identity, also to \eqref{eq:KS2:hook:yin}.  This proves \ref{it:KS2:hook}, since we
	have already shown that the second integrability condition is equivalent to \eqref{eq:2:alg}.  

	\ref{it:KS2:3-5+}
	Condition \eqref{eq:KS2:3-5+:yin} is equivalent to \eqref{eq:KS2:hook:yin}.  This results from \eqref{eq:hooks} when taking
	\eqref{eq:hook=0:2:yin} and \eqref{eq:kernels} into account.  In the same way \eqref{eq:KS2:3-5+:yang} is equivalent to
	\eqref{eq:KS2:hook:yang}, using \eqref{eq:hook=0:2:yang}.

	\ref{it:KS2:4-4+}
	We will prove the equivalence of \eqref{eq:KS2:hook:yin} to each of the equations \eqref{eq:KS2:4-4+}.  To this aim we
	establish three linearly independent homogeneous equations for the three tensors on the left hand side of
	\eqref{eq:KS2:4-4+}.
	\begin{subequations}
		\label{eq:system:2}
		For the first equation we split the hook symmetriser in \eqref{eq:KS2:hook:yin} at the label $b_2$ and get
		\begin{alignat*}{4}
			\smash[b]{\young(\btwo,\ctwo,\dtwo,\ftwo)}
			\young(\bone\done\eone\etwo)
			&\bar g_{ij}\bar g_{kl}
			&\bigl(
				& &S\indices{^i_{\underline c_2           d_1\underline d_2}}S\indices{^j_{           b_1}^k_{\underline b_2}}S\indices{^l_{\underline f_2           e_1           e_2}}&\\&&
				&+&S\indices{^i_{\underline c_2           e_1\underline d_2}}S\indices{^j_{           d_1}^k_{           b_1}}S\indices{^l_{\underline f_2           e_2\underline b_2}}&\\&&
				&+&S\indices{^i_{\underline c_2           e_2\underline d_2}}S\indices{^j_{           e_1}^k_{           d_1}}S\indices{^l_{\underline f_2\underline b_2           b_1}}&\\&&
				&+&S\indices{^i_{\underline c_2\underline b_2\underline d_2}}S\indices{^j_{           e_2}^k_{           e_1}}S\indices{^l_{\underline f_2           b_1           d_1}}&\\&&
				&+&S\indices{^i_{\underline c_2           b_1\underline d_2}}S\indices{^j_{\underline b_2}^k_{           e_2}}S\indices{^l_{\underline f_2           d_1           e_1}}&
			\bigr)
			=0
			\fullstop
		\end{alignat*}
		The fourth term vanishes by the \name{Bianchi} identity and the second term is equal to the third.  Therefore
		\eqref{eq:KS2:hook:yin} is equivalent to
		\begin{equation}
			\label{eq:system:2:two}
			\begin{split}
				\smash[b]{\young(\btwo,\ctwo,\dtwo,\ftwo)}
				&
				\young(\bone\done\eone\etwo)
				\bar g_{ij}\bar g_{kl}
				S\indices{^i_{\underline c_2d_1\underline d_2}}\\
				&\qquad
				\bigl(
					  S\indices{^j_{           b_1}^k_{\underline b_2}}S\indices{^l_{\underline f_2e_1           e_2}}
					+2S\indices{^j_{           e_1}^k_{           e_2}}S\indices{^l_{\underline f_2b_1\underline b_2}}
					+ S\indices{^j_{\underline b_2}^k_{           b_1}}S\indices{^l_{\underline f_2e_1           e_2}}
				\bigr)
				=0
				\fullstop
			\end{split}
		\end{equation}
		This is our first equation.  The other two equations follow from the first integrability condition as follows.  The
		second equation is obtained from \eqref{eq:equivalent} by contracting with $\bar g_{kl}S\indices{^l_{f_2e_1e_2}}$,
		antisymmetrising in $b_2,c_2,d_2,f_2$ and symmetrising in $b_1,d_1,e_1,e_2$:
		\[
			\young(\btwo,\ctwo,\dtwo,\ftwo)
			\young(\bone\done\eone\etwo)
			\bar g_{ij}\bar g_{kl}
			\bigl(
				  S\indices{^i_{           b_1}^k_{\underline b_2}}
				+2S\indices{^i_{\underline b_2}^k_{           b_1}}
			\bigr)
			S\indices{^j_{\underline c_2d_1\underline d_2}}
			S\indices{^l_{\underline f_2e_1           e_2}}
			=0
			\fullstop
		\]
		This can be rewritten as
		\begin{equation}
			\label{eq:system:2:equivalent}
			\young(\btwo,\ctwo,\dtwo,\ftwo)
			\young(\bone\done\eone\etwo)
			\bar g_{ij}\bar g_{kl}
			S\indices{^i_{\underline c_2d_1\underline d_2}}
			\bigl(
				  S\indices{^j_{           b_1}^k_{\underline b_2}}S\indices{^l_{\underline f_2e_1e_2}}
				+2S\indices{^j_{\underline b_2}^k_{           b_1}}S\indices{^l_{\underline f_2e_1e_2}}
			\bigr)
			=0
		\end{equation}
		and is our second equation.  For the third equation, we rename $b_1,b_2$ in \eqref{eq:consequence} as $e_1,e_2$,
		contract with $\bar g_{kl}S\indices{^l_{f_2b_1b_2}}$, antisymmetrise in $b_2,c_2,d_2,f_2$ and symmetrise in
		$b_1,d_1,e_1,e_2$:
		\begin{align*}
			\smash[b]{\young(\btwo,\ctwo,\dtwo,\ftwo)}
			&
			\young(\bone\done\eone\etwo)
			\bar g_{ij}
			\bar g_{kl}
			S\indices{^l_{\underline f_2b_1\underline b_2}}\\
			&\qquad
			\bigl(
						 S\indices{^i_{e_1}^k_{           e_2}}S\indices{^j_{\underline c_2d_1\underline d_2}}
				-\tfrac12S\indices{^i_{d_1}^k_{\underline d_2}}S\indices{^j_{\underline c_2e_1           e_2}}
				-        S\indices{^i_{\underline d_2}^k_{d_1}}S\indices{^j_{\underline c_2e_1           e_2}}
			\bigr)
			=0
			\fullstop
		\end{align*}
		This can be rewritten as
		\begin{equation}
			\label{eq:system:2:consequence}
			\begin{split}
				\smash[b]{\young(\btwo,\ctwo,\dtwo,\ftwo)}
				&
				\young(\bone\done\eone\etwo)
				\bar g_{ij}
				\bar g_{kl}
				S\indices{^i_{\underline c_2d_1\underline d_2}}\\
				&\qquad
				\bigl(
							 S\indices{^j_{           e_1}^k_{e_2}}S\indices{^l_{\underline f_2b_1\underline b_2}}
					-\tfrac12S\indices{^j_{\underline b_2}^k_{b_1}}S\indices{^l_{\underline f_2e_1           e_2}}
					-        S\indices{^j_{b_1}^k_{\underline b_2}}S\indices{^l_{\underline f_2e_1           e_2}}
				\bigr)
				=0
			\end{split}
		\end{equation}
		and is our last equation.
	\end{subequations}
	Clearly, the resulting homogeneous system \eqref{eq:system:2} implies \eqref{eq:KS2:4-4+}.  On the other hand, any of the
	equations \eqref{eq:KS2:4-4+} together with \eqref{eq:system:2:equivalent} and \eqref{eq:system:2:consequence} implies
	\eqref{eq:system:2:two} and therefore \eqref{eq:KS2:hook:yin}.

	\ref{it:KR:2}
	Condition \eqref{eq:KR:2} is equivalent to \eqref{eq:KS2:4-4+:both} via \eqref{eq:R<->S}.
\end{proof}

\subsection{Redundancy of the third integrability condition}

The aim of this section is to prove the following:
\begin{proposition}[Third integrability condition]
	For a \name{Killing} tensor on a constant sectional curvature manifold the third of the three integrability conditions
	\eqref{eq:TNS} is redundant.
\end{proposition}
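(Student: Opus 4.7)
The strategy parallels the derivation of the algebraic form of the second integrability condition in \linkref{proposition}{prop:KS2:4-4+}. The algebraic form \eqref{eq:3:alg} of the third condition has already been established, so the task is to show it is a formal consequence of the algebraic forms of the first two conditions, together with the general identities from \linkref{lemmas}{lem:equivalent} and \ref{lem:consequence}.

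First, I would produce direct analogues of the auxiliary identities \eqref{eq:system:2:equivalent} and \eqref{eq:system:2:consequence} by contracting the relations of \linkref{lemmas}{lem:equivalent} and \ref{lem:consequence} with the additional factor $\bar g_{mn}S\indices{^{nl}_{g_1g_2}}$ (so that three copies of $S$ appear on the left hand side), then antisymmetrising in $c_2,d_2,f_2$ and symmetrising in $b_2,b_1,d_1,e_1,e_2,g_1,g_2$ as dictated by the hook symmetriser of \eqref{eq:3:alg}. This yields two independent linear relations among the various triple-$S$ contractions with three factors of $\bar g$ that can occur.

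Next, I would apply the second integrability condition in a suitable equivalent form from \linkref{proposition}{prop:KS2:4-4+}\ref{it:KS2:4-4+} (contracted again with $\bar g_{mn}S\indices{^{nl}_{g_1g_2}}$ and symmetrised appropriately) to obtain a further independent relation. Finally, I would split the hook symmetriser in \eqref{eq:3:alg} at a suitable label, exactly as was done to pass to \eqref{eq:system:2:two}, thereby writing \eqref{eq:3:alg} itself as a linear combination of triple-$S$ contractions of the same general shape. Combining the relations produced in the previous steps should then force this linear combination to vanish, which is the desired redundancy.

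The main obstacle will be combinatorial bookkeeping: with three copies of $S$, three $\bar g$-contractions and a large hook symmetriser, the space of a priori inequivalent terms is considerably larger than in the proof of \linkref{proposition}{prop:KS2:4-4+}. One must repeatedly use the symmetrised \name{Bianchi} identity \eqref{eq:Bianchi:sym}, the pair symmetry of $S\indices{^i{}_a{}^k{}_b}$ under $(i,a)\leftrightarrow(k,b)$, the symmetry $\bar g_{ij}=\bar g_{ji}$, and the effective permutation of the three $S$-factors obtained by renaming dummy indices, in order to collapse this list to a manageable one. As in the proof of part \ref{it:KS2:4-4+} of \linkref{proposition}{prop:KS2:4-4+}, the crucial structural observation is that several superficially distinct terms in fact coincide (or differ by a fixed rational multiple) once these symmetries are taken into account, and it is this redundancy among terms that causes the linear system to have a unique solution forcing \eqref{eq:3:alg}.
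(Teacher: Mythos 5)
Your overall strategy is indeed the one the paper follows: reduce \eqref{eq:3:alg} to an expression under a split hook symmetriser (via \eqref{eq:hooks}, \eqref{eq:hook=0:3} and \eqref{eq:kernels}), identify the finitely many inequivalent terms of that shape, and derive from the first two integrability conditions a linear system that forces them all to vanish. But as written the plan has a degree error and leaves the decisive step unproved. First, the count: the third \name{Nijenhuis} condition is quartic in $K$, so \eqref{eq:3:alg} contains \emph{four} copies of $S$ and three copies of $\bar g$. Contracting the two-$S$ relations \eqref{eq:equivalent} and \eqref{eq:consequence} with a single factor $\bar g_{mn}S\indices{^{nl}_{g_1g_2}}$ produces cubic identities; these live in the wrong space and cannot be combined linearly with \eqref{eq:3:alg}. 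You need two additional $S$-factors (e.g.\ contracting \eqref{eq:consequence} with $\bar g_{kl}\bar g_{mn}S\indices{^l_{g_1}^m_{g_2}}S\indices{^n_{f_2e_1e_2}}$), or one additional factor applied to an already cubic form of the second condition such as \eqref{eq:KS2:3-5+:yang}.

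Second, and more substantively, the assertion that the relations so produced ``force the linear combination to vanish'' is precisely the content of the proposition, and it does not follow from generic bookkeeping. After reordering, the relevant quartic terms collapse to the three tensors \eqref{eq:+-}, so one needs three \emph{independent} relations among them. A direct contraction of \eqref{eq:consequence} yields $\eqref{eq:-++}-\tfrac12\eqref{eq:+-+}-\eqref{eq:++-}=0$, and the second integrability condition in the form \eqref{eq:KS2:3-5+:yang} yields $\eqref{eq:+-+}-\eqref{eq:++-}=0$; but these two alone leave a one-dimensional kernel, so the third condition is not yet implied. The missing relation $2\,\eqref{eq:+-+}+\eqref{eq:++-}=0$ comes from a genuinely different manipulation of \eqref{eq:consequence}: one raises an index, moves to the right-hand side exactly those terms whose three upper indices become implicitly antisymmetrised under the contraction with $\bar g_{ij}\bar g_{kl}\bar g_{mn}$ and the outer (anti)symmetrisers, and kills them with the \name{Bianchi} identity. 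Nothing in your plan produces this third relation, and without it the system is underdetermined; supplying it (together with the corrected degree count) is what turns the sketch into a proof.
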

We have already shown that the third integrability condition is equivalent to \eqref{eq:3:alg}.  As before we can infer from
\eqref{eq:hooks} together with \eqref{eq:hook=0:3} and \eqref{eq:kernels} that \eqref{eq:3:alg} is equivalent to
\begin{equation}
	\label{eq:3-7+}
	\begin{split}
		&
		\smash[b]{\young(\ctwo,\dtwo,\ftwo)}
		\young(\btwo\bone\done\eone\etwo\gone\gtwo)
		\bar g_{ij}
		\bar g_{kl}
		\bar g_{mn}
		\\&\qquad
		\bigl(
			S\indices{^{ik}_{b_1b_2}}S\indices{^j_{\underline c_2d_1\underline d_2}}+
			S\indices{^i_{\underline c_2b_1b_2}}S\indices{^j_{d_1}^k_{\underline d_2}}
		\bigr)
		S\indices{^m_{\underline f_2e_1e_2}}
		S\indices{^{nl}_{g_1g_2}}
		=0
		\fullstop
	\end{split}
\end{equation}
The proceeding to prove this equation is similar to the proof of part \ref{it:KS2:4-4+} in \linkref{proposition}{prop:KS2:4-4+}.
From the first two integrability conditions we will deduce the following three equations
\begin{subequations}
	\label{eq:system:3}
	\begin{alignat}{5}
		\label{eq:system:3:one}     \eqref{eq:-++}&-&\tfrac12&\eqref{eq:+-+}&-&\eqref{eq:++-}&=&0\\
		\label{eq:system:3:onexone}               & &      2 &\eqref{eq:+-+}&+&\eqref{eq:++-}&=&0\\
		\label{eq:system:3:two}                   & &        &\eqref{eq:+-+}&-&\eqref{eq:++-}&=&0
	\end{alignat}
\end{subequations}
for the tensors
\begin{subequations}
	\label{eq:+-}
	\begin{align}
		\label{eq:-++}&\bar g_{ij}\bar g_{kl}\bar g_{mn}S\indices{^i_{\underline c_2d_1\underline d_2}}S\indices{^j_{b_1}^k_{b_2}}S\indices{^l_{g_1}^m_{g_2}}S\indices{^n_{\underline f_2e_1e_2}}\\
		\label{eq:+-+}\smash{\young(\ctwo,\dtwo,\ftwo)\young(\btwo\bone\done\eone\etwo\gone\gtwo)\left\{\rule{0pt}{32pt}\right.}
		              &\bar g_{ij}\bar g_{kl}\bar g_{mn}S\indices{^i_{\underline c_2b_1b_2}}S\indices{^j_{d_1}^k_{\underline d_2}}S\indices{^l_{g_1}^m_{g_2}}S\indices{^n_{\underline f_2e_1e_2}}\\
		\label{eq:++-}&\bar g_{ij}\bar g_{kl}\bar g_{mn}S\indices{^i_{\underline c_2b_1b_2}}S\indices{^j_{\underline d_2}^k_{d_1}}S\indices{^l_{g_1}^m_{g_2}}S\indices{^n_{\underline f_2e_1e_2}}
		\fullstop
	\end{align}
\end{subequations}
The system \eqref{eq:system:3} shows that each of the tensors \eqref{eq:+-} is zero.  In particular this proves our claim, since
\eqref{eq:3-7+} can be written as a linear combination of these tensors.

\subsubsection{First equation}
Contract \eqref{eq:consequence} with $\bar g_{kl}\bar g_{mn}S\indices{^l_{g_1}^m_{g_2}}S\indices{^n_{f_2e_1e_2}}$,
antisymmetrise in $c_2,d_2,f_2$ and symmetrise in the remaining seven indices:
\begin{align*}
	\smash[b]{\young(\ctwo,\dtwo,\ftwo)}&
	\young(\btwo\bone\done\eone\etwo\gone\gtwo)
	\bar g_{ij}
	\bar g_{kl}
	\bar g_{mn}
	\\
	&\qquad
	\bigl(
				 S\indices{^i_{b_1}^k_{           b_2}}S\indices{^j_{\underline c_2d_1\underline d_2}}
		-\tfrac12S\indices{^i_{d_1}^k_{\underline d_2}}S\indices{^j_{\underline c_2b_1           b_2}}
		-        S\indices{^i_{\underline d_2}^k_{d_1}}S\indices{^j_{\underline c_2b_1           b_2}}
	\bigr)
	S\indices{^l_{g_1}^m_{g_2}}
	S\indices{^n_{\underline f_2e_1e_2}}
	=0
	\fullstop
\end{align*}
Renaming $i,j$ as $j,i$, this can be written as
\begin{align*}
	\smash[b]{\young(\ctwo,\dtwo,\ftwo)}&
	\young(\btwo\bone\done\eone\etwo\gone\gtwo)
	\bar g_{ij}
	\bar g_{kl}
	\bar g_{mn}
	\\
	&\qquad
	\bigl(
				 S\indices{^i_{\underline c_2d_1\underline d_2}}S\indices{^j_{b_1}^k_{           b_2}}
		-\tfrac12S\indices{^i_{\underline c_2b_1           b_2}}S\indices{^j_{d_1}^k_{\underline d_2}}
		-        S\indices{^i_{\underline c_2b_1           b_2}}S\indices{^j_{\underline d_2}^k_{d_1}}
	\bigr)
	S\indices{^l_{g_1}^m_{g_2}}
	S\indices{^n_{\underline f_2e_1e_2}}
	=0
	\fullstop
\end{align*}
This is our first equation \eqref{eq:system:3:one}.

\subsubsection{Second equation}
Reduce the antisymmetriser in \eqref{eq:consequence} completely,
\begin{align*}
	\young(\btwo\bone\done)
	\bar g_{ij}
	\bigl(
		 &        S\indices{^i_{b_1}^k_{b_2}}S\indices{^j_{c_2d_1d_2}}
		- \tfrac12S\indices{^i_{d_1}^k_{d_2}}S\indices{^j_{c_2b_1b_2}}
		-         S\indices{^i_{d_2}^k_{d_1}}S\indices{^j_{c_2b_1b_2}}\\
		-&        S\indices{^i_{b_1}^k_{b_2}}S\indices{^j_{d_2d_1c_2}}
		+ \tfrac12S\indices{^i_{d_1}^k_{c_2}}S\indices{^j_{d_2b_1b_2}}
		+         S\indices{^i_{c_2}^k_{d_1}}S\indices{^j_{d_2b_1b_2}}
	\bigr)
	=0
	\comma
\end{align*}
raise the index $c_2$, rename it as $m$ and bring the last two terms to the right hand side,
\begin{align*}
	&\young(\btwo\bone\done)
	\bar g_{ij}
	\bigl(
		         S\indices{^i_{b_1}^k_{b_2}}S\indices{^{jm}_{d_1d_2}}
		-\tfrac12S\indices{^i_{d_1}^k_{d_2}}S\indices{^{jm}_{b_1b_2}}
		-        S\indices{^i_{d_2}^k_{d_1}}S\indices{^{jm}_{b_1b_2}}
		-        S\indices{^i_{b_1}^k_{b_2}}S\indices{^j_{d_2d_1}^m}
	\bigr)                                
	\\&\qquad=                            
	-\young(\btwo\bone\done)
	\bar g_{ij}
	\bigl(
		\tfrac12S\indices{^i_{d_1}^{km}}S\indices{^j_{d_2b_1b_2}}
		+       S\indices{^{imk}_{d_1}} S\indices{^j_{d_2b_1b_2}}
	\bigr)
	\comma
\end{align*}
contract with $\bar g_{kl}\bar g_{mn}S\indices{^l_{f_2e_1e_2}}S\indices{^n_{h_2g_1g_2}}$, antisymmetrise in $d_2,f_2,h_2$ and
symmetrise in the remaining seven indices:
\[
	\begin{split}
		&
		\begin{split}
			\smash[b]{\young(\dtwo,\ftwo,\htwo)}&
			\young(\btwo\bone\done\eone\etwo\gone\gtwo)
			\bar g_{ij}
			\bar g_{kl}
			\bar g_{mn}
			S\indices{^l_{\underline f_2e_1e_2}}
			S\indices{^n_{\underline h_2g_1g_2}}
			\\&\qquad
			\bigl(
						 S\indices{^i_{           b_1}^k_{           b_2}}S\indices{^{jm}_{d_1\underline d_2}}
				-\tfrac12S\indices{^i_{           d_1}^k_{\underline d_2}}S\indices{^{jm}_{b_1b_2}}
				-        S\indices{^i_{\underline d_2}^k_{           d_1}}S\indices{^{jm}_{b_1b_2}}
				-        S\indices{^i_{           b_1}^k_{           b_2}}S\indices{^j_{\underline d_2d_1}^m}
			\bigr)
		\end{split}
		\\&\qquad
		\begin{split}
			=-\smash[b]{\young(\dtwo,\ftwo,\htwo)}&
			\young(\btwo\bone\done\eone\etwo\gone\gtwo)
			\bar g_{ij}
			\bar g_{kl}
			\bar g_{mn}
			\\&\qquad
			\bigl(
				\tfrac12S\indices{^i_{d_1}^{km}}S\indices{^j_{\underline d_2b_1b_2}}
				+       S\indices{^{imk}_{d_1}} S\indices{^j_{\underline d_2b_1b_2}}
			\bigr)
			S\indices{^l_{\underline f_2e_1e_2}}
			S\indices{^n_{\underline h_2g_1g_2}}
			\fullstop
		\end{split}
	\end{split}
\]
On the right hand side the upper indices $j,n,l$ are implicitely antisymmetrised by the symmetrisation in
$b_1,b_2,e_1,e_2,g_1,g_2$ and the antisymmetrisation in $d_2,f_2,h_2$.  Due to the term $\bar g_{ij}\bar g_{kl}\bar g_{mn}$ the
same holds for the upper indices $i,m,k$.  The \name{Bianchi} identity therefore implies that the right hand side is zero.  On
the left hand side, the \name{Bianchi} identity allows us to bring the index $m$ in each term to the third position:
\begin{alignat*}{3}
	\smash[b]{\young(\dtwo,\ftwo,\htwo)}
	\young(\btwo\bone\done\eone\etwo\gone\gtwo)
	&
	\bar g_{ij}
	\bar g_{kl}
	\bar g_{mn}
	\\
	S\indices{^l_{\underline f_2e_1e_2}}
	\bigl(
		-  &S\indices{^i_{b_1}^k_{           b_2}}S\indices{^j_{d_1}^m_{\underline d_2}}
		+& &S\indices{^i_{d_1}^k_{\underline d_2}}S\indices{^j_{b_1}^m_{           b_2}}
	\\
		- 2&S\indices{^i_{           b_1}^k_{b_2}}S\indices{^j_{\underline d_2}^m_{d_1}}
		+&2&S\indices{^i_{\underline d_2}^k_{d_1}}S\indices{^j_{           b_1}^m_{b_2}}
	\bigr)                                
	S\indices{^n_{\underline h_2g_1g_2}}
	=0
	\fullstop
\end{alignat*}
Using pair symmetry and renaming the indices $i,j,k,l$ as $k,l,j,i$, this can be written as
\begin{alignat*}{3}
	\smash[b]{\young(\dtwo,\ftwo,\htwo)}
	\young(\btwo\bone\done\eone\etwo\gone\gtwo)
	&
	\bar g_{ij}
	\bar g_{kl}
	\bar g_{mn}
	\\
	S\indices{^i_{\underline f_2e_1e_2}}
	\bigl(
		-  &S\indices{^j_{           b_2}^k_{b_1}}S\indices{^l_{d_1}^m_{\underline d_2}}
		+& &S\indices{^j_{\underline d_2}^k_{d_1}}S\indices{^l_{b_1}^m_{           b_2}}
	\\
		- 2&S\indices{^j_{b_2}^k_{           b_1}}S\indices{^l_{\underline d_2}^m_{d_1}}
		+&2&S\indices{^j_{d_1}^k_{\underline d_2}}S\indices{^l_{           b_1}^m_{b_2}}
	\bigr)                                
	S\indices{^n_{\underline h_2g_1g_2}}
	=0
\end{alignat*}
Renaming $i,j,k,l,m,n$ in reverse order, the first term can be seen to differ from the second by a sign.  The same is true for
the third and fourth term, resulting in
\begin{align*}
	\smash[b]{\young(\dtwo,\ftwo,\htwo)}&
	\young(\btwo\bone\done\eone\etwo\gone\gtwo)
	\bar g_{ij}
	\bar g_{kl}
	\bar g_{mn}
	\\
	&\qquad
	S\indices{^i_{\underline f_2e_1e_2}}
	\bigl(
		  S\indices{^j_{\underline d_2}^k_{d_1}}S\indices{^l_{b_1}^m_{           b_2}}
		+2S\indices{^j_{d_1}^k_{\underline d_2}}S\indices{^l_{           b_1}^m_{b_2}}
	\bigr)                                
	S\indices{^n_{\underline h_2g_1g_2}}
	=0
	\fullstop
\end{align*}
This is our second equation \eqref{eq:system:3:onexone}.

\subsubsection{Third equation}
Take the second integrability condition in the form \eqref{eq:KS2:3-5+:yang}, reduce the antisymmetriser by the label $f_2$,
\begin{align*}
	\young(\ctwo,\dtwo)\young(\btwo\bone\done\eone\etwo)
	\bar g_{ij}\bar g_{kl}
	\bigl(
		 &S\indices{^i_{\underline c_2b_1b_2}}S\indices{^j_{d_1}^k_{\underline d_2}}S\indices{^l_{           f_2e_1e_2}}\\
		+&S\indices{^i_{\underline d_2b_1b_2}}S\indices{^j_{d_1}^k_{           f_2}}S\indices{^l_{\underline c_2e_1e_2}}\\
		+&S\indices{^i_{           f_2b_1b_2}}S\indices{^j_{d_1}^k_{\underline c_2}}S\indices{^l_{\underline d_2e_1e_2}}
	\bigr)
	=0
	\comma
\end{align*}
raise the index $f_2$, rename it as $m$ and bring the second term to the right hand side,
\begin{align*}
	\young(\ctwo,\dtwo)&
	\young(\btwo\bone\done\eone\etwo)
	\bar g_{ij}\bar g_{kl}
	\bigl(
		S\indices{^i_{\underline c_2b_1b_2}}
		S\indices{^j_{d_1}^k_{\underline d_2}}
		S\indices{^{lm}_{e_1e_2}}
		+
		S\indices{^{im}_{b_1b_2}}
		S\indices{^j_{d_1}^k_{\underline c_2}}
		S\indices{^l_{\underline d_2e_1e_2}}
	\bigr)
	\\&\qquad
	=-
	\young(\ctwo,\dtwo)
	\young(\btwo\bone\done\eone\etwo)
	\bar g_{ij}
	\bar g_{kl}
	S\indices{^i_{\underline d_2b_1b_2}}
	S\indices{^j_{d_1}^{km}}
	S\indices{^l_{\underline c_2e_1e_2}}
	\comma
\end{align*}
contract with $\bar g_{mn}S\indices{^n_{f_2g_1g_2}}$, antisymmetrise in $c_2,d_2,f_2$ and symmetrise in the remaining seven indices:
\begin{align*}
	\smash[b]{\young(\ctwo,\dtwo,\ftwo)}&
	\young(\btwo\bone\done\eone\etwo\gone\gtwo)
	\bar g_{ij}
	\bar g_{kl}
	\bar g_{mn}
	\\&\qquad
	\bigl(
		S\indices{^i_{\underline c_2b_1b_2}}
		S\indices{^j_{d_1}^k_{\underline d_2}}
		S\indices{^{lm}_{e_1e_2}}
		+
		S\indices{^{im}_{b_1b_2}}
		S\indices{^j_{d_1}^k_{\underline c_2}}
		S\indices{^l_{\underline d_2e_1e_2}}
	\bigr)
	S\indices{^n_{\underline f_2g_1g_2}}
	\\&
	=-
	\young(\ctwo,\dtwo,\ftwo)
	\young(\btwo\bone\done\eone\etwo\gone\gtwo)
	\bar g_{ij}
	\bar g_{kl}
	\bar g_{mn}
	S\indices{^i_{\underline d_2b_1b_2}}
	S\indices{^j_{d_1}^{km}}
	S\indices{^l_{\underline c_2e_1e_2}}
	S\indices{^n_{\underline f_2g_1g_2}}
	\fullstop
\end{align*}
As before, the right hand side is zero.  On the left hand side use the symmetrised \name{Bianchi} identity
to move the upper index $m$ to the third position:
\begin{align*}
	\smash[b]{\young(\ctwo,\dtwo,\ftwo)}&
	\young(\btwo\bone\done\eone\etwo\gone\gtwo)
	\bar g_{ij}
	\bar g_{kl}
	\bar g_{mn}
	\\&\qquad
	\bigl(
		S\indices{^i_{\underline c_2b_1b_2}}
		S\indices{^j_{d_1}^k_{\underline d_2}}
		S\indices{^l_{e_1}^m_{e_2}}
		+
		S\indices{^i_{b_1}^m_{b_2}}
		S\indices{^j_{d_1}^k_{\underline c_2}}
		S\indices{^l_{\underline d_2e_1e_2}}
	\bigr)
	S\indices{^n_{\underline f_2g_1g_2}}
	=0
	\fullstop
\end{align*}
Using pair symmetry and renaming the upper indices in the second term, this can be written as
\begin{align*}
	\smash[b]{\young(\ctwo,\dtwo,\ftwo)}&
	\young(\btwo\bone\done\eone\etwo\gone\gtwo)
	\bar g_{ij}
	\bar g_{kl}
	\bar g_{mn}
	\\&\qquad
	\bigl(
		S\indices{^i_{\underline c_2b_1b_2}}
		S\indices{^j_{d_1}^k_{\underline d_2}}
		S\indices{^l_{e_1}^m_{e_2}}
		+
		S\indices{^i_{\underline d_2e_1e_2}}
		S\indices{^j_{\underline c_2}^k_{d_1}}
		S\indices{^l_{b_1}^m_{b_2}}
	\bigr)
	S\indices{^n_{\underline f_2g_1g_2}}
	=0
	\fullstop
\end{align*}
This is our third and last equation \eqref{eq:system:3:two}.

\section{Application}
\label{sec:application}

Finally, we show that the family \eqref{eq:family} satisfies the algebraic integrability conditions \eqref{eq:main} and
therefore describes integrable \name{Killing} tensors.
\begin{proof}[Proof of the Main Corollary]
	We will write a dot in place of each index whose name is irrelevant for our considerations.  Written in components, the
	algebraic curvature tensor
	\[
		R=\lambda_2h\varowedge h+\lambda_1h\varowedge g+\lambda_0g\varowedge g
	\]
	is then a linear combination of tensors of the form $h_{\cdot\cdot}h_{\cdot\cdot}$, $h_{\cdot\cdot}g_{\cdot\cdot}$ and
	$g_{\cdot\cdot}g_{\cdot\cdot}$ or, written in another way, of the form $A_{\cdot\cdot}B_{\cdot\cdot}$ with $A,B\in\{g,h\}$.
	Then $R_{a_1b_1a_2b_2}R_{c_1d_1c_2d_2}$ is a linear combination of terms of the form
	$A_{\cdot\cdot}B_{\cdot\cdot}C_{\cdot\cdot}D_{\cdot\cdot}$ with $A,B,C,D\in\{g,h\}$ and thus
	$g_{ij}R\indices{^i_{b_1a_2b_2}}R\indices{^i_{d_1c_2d_2}}$ is a linear combination of terms of the form
	$A_{\cdot\cdot}B_{\cdot\cdot}C_{\cdot\cdot}$ with $A,B,C\in\{g,h,h^2\}$.  Here $g$, $h$ and $h^2$ are symmetric tensors,
	where $h^2$ denotes the tensor $g_{ij}h\indices{^i_a}h\indices{^j_b}$.  Therefore the antisymmetrisation of
	$A_{\cdot\cdot}B_{\cdot\cdot}C_{\cdot\cdot}$ in four of the six indices vanishes by \name{Dirichlet}'s drawer principle.
	This proves that the first integrability condition \eqref{eq:main:1} is satisfied.

	In the same way, the tensor $R_{\cdot\cdot\cdot\cdot}R_{\cdot\cdot\cdot\cdot}R_{\cdot\cdot\cdot\cdot}$ is a linear
	combination of terms of the form $A_{\cdot\cdot}B_{\cdot\cdot}C_{\cdot\cdot}D_{\cdot\cdot}E_{\cdot\cdot}F_{\cdot\cdot}$ with
	$A,\ldots,F\in\{g,h\}$.  Hence the tensor
	\[
		g_{ij}
		g_{kl}
		R\indices{^i_{b_1a_2b_2}}
		R\indices{^j_{a_1}^k_{c_1}}
		R\indices{^l_{d_1c_2d_2}}
	\]
	is a linear combination of terms of the form $A_{\cdot\cdot}B_{\cdot\cdot}C_{\cdot\cdot}D_{\cdot\cdot}$ with either
	$A,B,C,D\in\{g,h,h^2\}$ or $A,B,C,D\in\{g,h,h^3\}$, where $h^3$ denotes the symmetric tensor
	\[
		g_{ij}g_{kl}h\indices{^i_a}h^{jk}h\indices{^l_b}
		\,\fullstop
	\]
	Without loss of generality we may suppose $D=C$, owing to the drawer principle.  Consider therefore the tensor
	$A_{\cdot\cdot}B_{\cdot\cdot}C_{\cdot\cdot}C_{\cdot\cdot}$ under antisymmetrisation in four of its indices and
	symmetrisation in the remaing four.  The result vanishes trivially if the antisymmetrisation includes an index pair of one
	of the symmetric tensors $A,B,C$.  Otherwise it can be written as
	\[
		\young(\atwo,\btwo,\ctwo,\dtwo)
		\young(\aone \bone \cone \done)
		A_{a_1a_2}B_{b_1b_2}C_{c_1c_2}C_{d_1d_2}
	\]
	and vanishes too, which becomes evident when $c_1,c_2$ is renamed as $d_1,d_2$.  This demonstrates that the second algebraic
	integrability condition \eqref{eq:main:2} is also satisfied.
\end{proof}
\begin{remark}
	The family \eqref{eq:family} properly extends \name{Benenti} tensors, given by
	\begin{align*}
		\lambda_0=\lambda_1&=0&
		\lambda_2&=\tfrac12&
		h&=Ag&
		A&\in\GLG(V)
		\fullstop
	\end{align*}
	Indeed, a \name{Killing} tensor corresponding to $h\varowedge g$ with $\tr h<0$ for example is not a \name{Benenti} tensor,
	as can be seen by comparing the scalar curvatures
	\begin{align*}
		\operatorname{Scal}\bigl(\tfrac12(Ag)\varowedge(Ag)\bigr)
		&=\tr^2(Ag)-\tr(Ag)^2
		=\tr^2(A^T\!\!A)-\tr(A^T\!\!AA^T\!\!A)\\
		&=\lVert A\rVert^4-\lVert A^T\!\!A\rVert^2
		\ge0\\
	\intertext{and}
		\operatorname{Scal}(h\varowedge g)
		&=2(N-1)\tr h
		<0
		\fullstop
	\end{align*}
\end{remark}

\bibliographystyle{amsalpha}
\bibliography{\jobname}

\end{document}